\def\makeautorefname#1#2{\expandafter\def\csname#1autorefname\endcsname{#2}}
\def\equationautorefname~#1\null{(#1)\null}
\newcommand{\Aut}{\text{Aut}}
\newcommand{\Z}{\mathbb{Z}}
\newcommand{\N}{\mathbb{N}}
\newtheorem{thm}{Theorem}[section]
\newtheorem{cor}{Corollary}[section]
\newtheorem{prop}{Proposition}[section]
\newtheorem{lem}{Lemma}[section]
\theoremstyle{definition}
\newtheorem{defn}{Definition}[section]
\newtheorem{exmp}{Example}[section]
\newtheorem{quest}{Question}[section]
\newtheorem{obs}{Observation}[section]
\newtheorem{conv}{Convention}[section]
\let\c@obs=\c@thm
\let\c@cor=\c@thm
\let\c@prop=\c@thm
\let\c@lem=\c@thm
\let\c@prob=\c@thm
\let\c@con=\c@thm
\let\c@conj=\c@thm
\let\c@defn=\c@thm
\let\c@notn=\c@thm
\let\c@notns=\c@thm
\let\c@exmp=\c@thm
\let\c@ax=\c@thm
\let\c@pro=\c@thm
\let\c@ass=\c@thm
\let\c@warn=\c@thm
\let\c@rem=\c@thm
\let\c@sch=\c@thm
\let\c@equation\c@thm
\numberwithin{equation}{section}
\title{On the Existence of Partition of the Hypercube Graph Into 3 Initial Segments}
\author{Ethan Soloway, Megan Triplett, Wenshi Zhao}
\thanks{This paper was supported in part by NSF DMS grant No. 2349684. The work for this paper was conducted during 2024 Auburn University Summer REU}
\begin{document}

\maketitle

\begin{abstract}

Let $Q_n = \{0, 1\}^n$ be a hypercube graph. The initial segment $I_k \subseteq Q_n$ is the subset consisting of the first $k$ vertices of $Q_n$ in the binary order. A pair of integers $(a, b) \in \Z_{>0}^2$ is said to be fit if, whenever $2^n \geq a+b$, there exists $g_1, g_2 \in \Aut(Q_n)$ such that $g_1(I_a) \cup g_2(I_b) = I_{a+b}$, and $(a,b)$ is unfit otherwise. For $a + b + c = 2^n$, there is a partition of $Q_n$ into $3$ initial segments of length $a, b$, and $c$ if and only if $(a, b)$ is a fit pair. Thus, the notion of fit and unfit pairs is closely related to the graph-partition problem for hypercube graphs. This paper introduces a new criterion in determining whether $(a,b)$ is fit using an easy-to-compute point-counting function and applies this criterion to generate the set of all unfit pairs. It further shows that the number of unfit pairs $(a,b)$, where $0 < a,b \leq 2^n$, is $4^n - \binom{4}{1}3^n + \binom{4}{2} 2^n - \binom{4}{1}$, which is also the number of surjection of an $n$-element set to a $4$-element set.
\end{abstract}

\tableofcontents

\section{Introduction}

\begin{defn}[Hypercube Graph]
\label{hypercube}
Let $Q_n :=\{0, 1\}^n$ denote the set of binary strings of length $n$. We give $Q_n$ the structure of a graph by saying that two strings $x, y \in \{0, 1\}^n$ share an edge iff they differ in exactly one digit.
\end{defn}

\begin{defn}[Binary Order]
\label{binorder}
There is a canonical ordering on $Q_n$. Given $x, y \in Q_n$, we view them as binary representations of natural numbers and use the standard $x \leq y$ ordering on the naturals. The \textbf{initial segment} of length $a$, denoted $I_a \subseteq Q_n$, is the set of $x \in Q_n$ such that $x < a$. Note this is only well defined for $a \leq 2^n$.
\end{defn}

\begin{exmp}
\label{initexample}
Consider $I_6 \subseteq Q_3$. This is the set $I_6 = \{ 000, 001, 010, 011, 100, 101\}$ as shown in Figure \ref{sixinit}.
\end{exmp}

\begin{figure}[h] \label{sixinit}
\includegraphics[width=3cm]{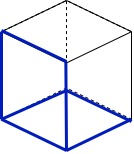}
\caption{ $I_6$ in the hypercube $Q_3$}
\end{figure}

\begin{conv}
\label{autconvention}
Let $Aut(Q_n)$ be the set of graph automorphisms of $Q_n$. For $S \subseteq Q_n$ and $g \in G$, we will let $g(S)$ denote the image of $S$ under the automorphism  $g$. For $S, T \subseteq Q_n$, we will write $S \cong T$ if their induced subgraphs are isomorphic.
\end{conv}

\begin{defn}[Edge Boundary]
\label{edgeboundary}
Let $G$ be a finite simple graph and $S \subseteq V$ be an arbitrary subset of vertices. Then $\delta S = \{ (u, v) \in E | u \in S, v \notin S\}$, i.e. the set of edges with exactly one vertex in $S$. If we view $S$ as partitioning $V$ into $(S, V \setminus S)$, this can also be interpreted as the edges crossing the partition.
\end{defn}

\begin{quest}[Edge Isoperimetric Problem]
\label{eip}
Fix some positive integer $k$. Which sets $S \subseteq G$ with $|S| = k$ minimize the quantity $|\delta S|?$
\end{quest}

\noindent In particular, people asked this question for the hypercube graph $G = Q_n$. The following theorem is originally proven in \cite{HarperProof}, \cite{LindseyProof}, \cite{BernsteinProof}, and \cite{HartProof}.

\begin{thm}[Harper, Lindsey, Bernstein, Hart] \label{hypercubeeip} For $S \subseteq Q_n$ non-empty, $|S|= k$, $S$ minimizes the edge boundary iff $S \cong I_k$.
\end{thm}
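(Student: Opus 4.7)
The plan is to proceed by induction on the dimension $n$. The base case $n = 1$ is immediate: the non-empty proper subsets of $Q_1$ are the two singletons, each with edge boundary $1$, and each is (trivially) isomorphic to $I_1$.

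For the inductive step, fix a coordinate and decompose $Q_n = Q_{n-1}^0 \sqcup Q_{n-1}^1$, where $Q_{n-1}^i = \{i\} \times Q_{n-1}$. Given $S \subseteq Q_n$ with $|S| = k$, set $S_i = S \cap Q_{n-1}^i$ and view each as a subset of $Q_{n-1}$ of size $k_i$, so $k_0 + k_1 = k$. Every edge of $Q_n$ is either internal to one of the two halves or is a ``matching'' edge joining $(0,x)$ to $(1,x)$; a matching edge lies in $\delta S$ exactly when $x$ lies in $S_0 \triangle S_1$. This yields the key decomposition
\[
|\delta S| \;=\; |\delta_{Q_{n-1}} S_0| + |\delta_{Q_{n-1}} S_1| + |S_0 \triangle S_1|.
\]

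By the inductive hypothesis each internal term is bounded below by $B(k_i)$, the minimum edge boundary of a size-$k_i$ subset of $Q_{n-1}$, with equality iff $S_i \cong I_{k_i}$ in $Q_{n-1}$. Furthermore, when $S_0, S_1$ are chosen to be the standard initial segments $I_{k_0}, I_{k_1}$ with $k_0 \geq k_1$, they are automatically nested ($I_{k_1} \subseteq I_{k_0}$), so $|S_0 \triangle S_1| = k_0 - k_1$ is simultaneously minimized. The problem thus reduces to minimizing
\[
F(k_0) \;:=\; B(k_0) + B(k - k_0) + |2k_0 - k|
\]
over $\max(0, k - 2^{n-1}) \leq k_0 \leq 2^{n-1}$. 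The candidate minimizer is $k_0^\ast := \min(k, 2^{n-1})$, which is precisely the split realized by $I_k \subseteq Q_n$ in the binary order: either $I_k$ fits inside $Q_{n-1}^0$, or it fills $Q_{n-1}^0$ and continues as an initial segment in $Q_{n-1}^1$.

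The main obstacle is verifying that $F$ attains its minimum at $k_0^\ast$. This boils down to a quantitative understanding of how $B$ grows, which can be obtained from the explicit recursion $B(m+1) - B(m) = (n-1) - 2\,\mathrm{popcount}(m)$ (derived by tracking the edges gained and lost when the vertex whose binary representation is $m$ is adjoined to $I_m$ in $Q_{n-1}$). Using this together with the linear penalty $|2k_0 - k|$, one shows by a careful case analysis on the binary expansions of $k_0$ and $k - k_0$ that any non-extremal split can be strictly improved by transferring one vertex across the midplane, so the minimum of $F$ must be realized at $k_0^\ast$.

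Finally, the equality characterization $S \cong I_k$ is obtained by pushing the equality conditions through the induction: equality forces each $S_i$ into the $\Aut(Q_{n-1})$-orbit of an initial segment and forces the extremal split; these ingredients combine (after a suitable permutation of coordinates and bit flips) into an element of $\Aut(Q_n)$ carrying $S$ onto $I_k$, completing the induction.
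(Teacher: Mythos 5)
First, a point of comparison: the paper does not prove this theorem at all --- it is quoted as a classical result with citations to Harper, Lindsey, Bernstein, and Hart --- so your attempt is measured against the known proofs of the edge-isoperimetric inequality rather than against anything in the text.

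Your setup is correct and standard: the identity $|\delta S| = |\delta_{Q_{n-1}} S_0| + |\delta_{Q_{n-1}} S_1| + |S_0 \triangle S_1|$, the bound $|S_0\triangle S_1|\ge |2k_0-k|$, and the observation that nested initial segments achieve it, do reduce the theorem (given the inductive hypothesis) to showing that $k_0^\ast=\min(k,2^{n-1})$ minimizes $F(k_0)=B(k_0)+B(k-k_0)+|2k_0-k|$. But this reduction merely repackages the whole difficulty of the theorem into that one minimization, and the method you propose for it --- ``any non-extremal split can be strictly improved by transferring one vertex across the midplane'' --- is false. Take $n=3$, $k=4$, so $B=B_2$ with $B_2(0)=B_2(4)=0$ and $B_2(1)=B_2(2)=B_2(3)=2$. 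Then $F(2)=2+2+0=4=F(4)$ while $F(3)=2+2+2=6$: the split $k_0=2$ is non-extremal, already attains the global minimum, and \emph{every} single-vertex transfer strictly increases $F$. So $F$ is neither unimodal nor monotone toward $k_0^\ast$, and no local-exchange argument can carry a general split to the extremal one. The same example breaks your equality analysis: equality does not force the extremal split, since the $2$-face $\{000,001,100,101\}$ is a minimizer realizing the split $2+2$; classifying the minimizers with non-extremal splits (and showing they are still isomorphic to initial segments, or impossible) is precisely the hard part. This is the well-known obstruction that makes the theorem nontrivial; the actual proofs circumvent it with global counting arguments (Harper, Hart) or with compression in several directions followed by a classification of fully compressed sets that are not initial segments (Bollob\'as--Leader style). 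As written, your key step is a genuine gap, not a deferred routine verification.
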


\noindent Since the edge isoperimetric problem can also be viewed as minimizing the edges crossing a partition of $Q_n$ into $2$ sets, it is natural to consider partioning $Q_n$ into $3$, or in general $k$ sets. 

\begin{defn}[Partition Boundary]
\label{partbound}Let $A = \{A_i\}_{i = 1}^k$ be a partition of $Q_n$ into $k$ sets. Then $\delta A := \{(u, v) \in E(Q_n) | u \in A_i, v \in A_j, i \neq j\}$.
\end{defn}

\noindent Previous results have concerned partioning $Q_n$ into $k$ approximately equal sets, i.e. differing in cardinality by at most one (cite Bezrukov). However, in  this paper we consider 3-partioning the hypercube into sets of arbitrary size. \\

\noindent The following result, which appears on \cite{HarperBook} p.7, gives us a convenient language.

\begin{prop}
\label{transitiveaction}
Consider $I_a$ as an induced subgraph and let $i : I_a \to Q_n$ be an arbitrary embedding of the initial segment of length $a$ into $Q_n$. Then there is some $g \in Aut(Q_n)$ such that $i(I_a) = g(I_a)$.
\end{prop}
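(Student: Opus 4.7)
The plan is to prove this by strong induction on $a$, using the canonical layered decomposition of $I_a$ coming from the binary expansion of $a$: if $a = 2^{k_1} + 2^{k_2} + \cdots + 2^{k_m}$ with $k_1 > k_2 > \cdots > k_m \geq 0$, then $I_a$ splits as a disjoint union of subcubes of dimensions $k_1, \ldots, k_m$, glued together along a specific set of edges. The base case $a = 1$ is immediate, since $\Aut(Q_n)$ contains every bit-flipping map and therefore acts transitively on vertices: given any target vertex $v$, the map flipping exactly the coordinates where $v$ has a $1$ sends $0^n$ to $v$.

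For the inductive step, given an embedding $i \colon I_a \to Q_n$, I would first restrict to $i|_{I_{a-1}}$, which remains an embedding since the induced subgraph of $I_{a-1}$ inside $I_a$ coincides with the induced subgraph of $I_{a-1}$ inside $Q_n$. By the inductive hypothesis there exists $h \in \Aut(Q_n)$ with $h(I_{a-1}) = i(I_{a-1})$, so after replacing $i$ by $h^{-1} \circ i$ I may assume that the image of $I_{a-1}$ is $I_{a-1}$ itself. The problem then reduces to showing that the single remaining vertex $i(a-1)$ can be moved onto $a-1$ by an automorphism of $Q_n$ that setwise fixes $I_{a-1}$.

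To handle this final step, I would characterize $a-1$ graph-theoretically inside $I_a$ by its neighborhood in $I_{a-1}$: writing $a-1 = \sum_{j} 2^{k_j}$, the neighbors of $a-1$ inside $I_{a-1}$ are exactly the vertices $a - 1 - 2^{k_j}$. Any valid candidate for $i(a-1)$ must lie outside $I_{a-1}$ and share this same $I_{a-1}$-neighborhood, after possibly composing with the restriction $i|_{I_{a-1}}$, which is itself a graph automorphism of $I_{a-1}$. From the layered subcube decomposition one can then build the required automorphism explicitly out of bit-permutations and bit-flips acting on coordinates not yet pinned down by the binary pattern of $a-1$.

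The main obstacle is proving that the setwise stabilizer of $I_{a-1}$ in $\Aut(Q_n)$ acts transitively on the set of vertices outside $I_{a-1}$ whose $I_{a-1}$-neighborhoods match that of $a-1$. This requires a careful accounting of which permutations of coordinates and which bit-flips preserve the layered cube decomposition of $I_{a-1}$ and hence extend to automorphisms of $Q_n$ fixing $I_{a-1}$ setwise; the coupling between different layers $C_j$ makes this nontrivial. I expect an inner induction on the Hamming weight of $a-1$, layer by layer, will expose enough explicit symmetry to make the required transitivity manifest and allow the outer induction on $a$ to close.
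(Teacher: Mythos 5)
The paper itself offers no proof of this proposition --- it is quoted from Harper's book (p.~7) --- so there is nothing internal to compare against; judged on its own, your outline has a genuine gap. The skeleton is reasonable (induction on $a$, vertex-transitivity for the base case, using the inductive hypothesis to normalize so that $i(I_{a-1}) = I_{a-1}$), but the entire content of the proposition is concentrated in the step you defer: that the setwise stabilizer of $I_{a-1}$ in $\Aut(Q_n)$ acts transitively on the admissible positions for the last vertex. You name this as ``the main obstacle'' and say you \emph{expect} an inner induction on the Hamming weight to resolve it, but no argument is given, so the proof does not close.

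Two concrete issues make that step harder than the sketch suggests. First, the inductive hypothesis only yields the set equality $h(I_{a-1}) = i(I_{a-1})$, not $h|_{I_{a-1}} = i|_{I_{a-1}}$; after normalization, $\phi = i|_{I_{a-1}}$ is an arbitrary automorphism of the \emph{abstract} induced subgraph $I_{a-1}$, and you have no a priori guarantee that $\phi$ --- hence the twisted neighborhood $\phi(N(a-1)\cap I_{a-1})$ that characterizes the candidate vertices --- is realized by any element of $\Aut(Q_n)$. Extendability of abstract automorphisms of $I_{a-1}$ to the ambient cube is itself a statement of essentially the same depth as the proposition you are proving, so invoking it silently is circular. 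Second, the transitivity you need is strictly stronger than the proposition itself: the proposition only guarantees \emph{some} $g$ with $g(I_a) = I_{a-1}\cup\{w\}$, and such a $g$ need not stabilize $I_{a-1}$ (it could send $a-1$ into $I_{a-1}$ and a different vertex onto $w$), so your reduction commits you to an orbit statement for the stabilizer that does not follow from the result being proved and for which you give no mechanism. A likelier-to-succeed induction peels off the top subcube rather than a single vertex: writing $a = 2^{k_1} + r$ with $r < 2^{k_1}$, one shows that any induced copy of $I_a$ contains a distinguished $k_1$-dimensional subcube with the residual $r$ vertices forming a copy of $I_r$ attached in a forced position inside an adjacent $k_1$-subcube; this gives the outer induction much more rigidity to work with than the one-vertex extension does.
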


\begin{obs}
\label{3initpartition}
Let $A$ be a $3$-partition of $Q_n$ such that $A_1 \cong I_a$, $A_2 \cong I_b$, and $A_3 \cong I_c$, for some positive integers $a + b + c = 2^n$. Then $|\delta A| = \frac{1}{2}(|\delta A_1| + |\delta A_2| + |\delta A_3|)$ is minimized over all partitions of size $(a, b, c)$ by Theorem \ref{hypercubeeip}. It is natural to ask when $Q_n$ can be partitioned into such $A_1, A_2, A_3$. It is also equivalent to say that given $(a, b, c)$, there are $A, B$ such that $A \cong I_a$, $B \cong I_b$, and $A \cup B \cong I_{a + b}$, as $Q_n \setminus (A \cup B)$ must then be isomorphic to $I_c$. By Proposition \ref{transitiveaction}, we must have $A = g_1(I_a)$, $B = g_2(I_b)$ and $A \cup B = g_3(I_{a + b})$, for some $g_1, g_2, g_3 \in Aut(Q_n)$. Thus, we have $g_1(I_a) \cup g_2(I_b) = g_3(I_{a + b})$. Applying $g_3^{-1}$ to both sides this is equivalent to $g_1(I_a) \cup g_2(I_b) = I_{a + b}$ for some $g_1, g_2 \in Aut(Q_n)$.
\end{obs}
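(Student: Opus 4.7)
My plan is to unpack the Observation as three sub-claims and prove each in turn.

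\textbf{First} I would establish the identity $|\delta A| = \tfrac{1}{2}(|\delta A_1| + |\delta A_2| + |\delta A_3|)$ by double-counting: every cross-edge has endpoints in two different parts $A_i, A_j$, so it contributes once to $|\delta A_i|$ and once to $|\delta A_j|$, while edges internal to a single part contribute nowhere. Once this formula is in hand, Theorem \ref{hypercubeeip} applied to each part independently forces the minimum of $|\delta A|$ over partitions of prescribed sizes $a,b,c$ to be attained precisely when each $A_i \cong I_{|A_i|}$.

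\textbf{Next} I would prove that the existence of such a partition is equivalent to finding two subsets $A, B \subseteq Q_n$ with $A \cong I_a$, $B \cong I_b$, and $A \cup B \cong I_{a+b}$. The forward implication is trivial. For the converse, the main point I need is the auxiliary fact that $Q_n \setminus I_k \cong I_{2^n - k}$ as induced subgraphs; the clean way to get this is via the bit-flip automorphism $f(x) = x \oplus 1^n$, which sends the binary number $x$ to $2^n - 1 - x$ and thereby maps $I_{2^n - k}$ onto $Q_n \setminus I_k$. Combining this with Proposition \ref{transitiveaction} to move $A \cup B$ to $I_{a+b}$ by some $g \in \Aut(Q_n)$ gives $Q_n \setminus (A \cup B) = g(Q_n \setminus I_{a+b}) \cong I_c$, and the disjointness of $A, B$ follows from $|A| + |B| = a + b = |A \cup B|$.

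\textbf{Finally} I would rewrite the subgraph existence problem in the form $g_1(I_a) \cup g_2(I_b) = I_{a+b}$, which is pure bookkeeping with Proposition \ref{transitiveaction}: choose $g_1, g_2, g_3$ realizing $A, B, A \cup B$ as automorphism-images of $I_a, I_b, I_{a+b}$, then apply $g_3^{-1}$ to both sides of $g_1(I_a) \cup g_2(I_b) = g_3(I_{a+b})$ and relabel. The reverse implication is immediate on taking $A = g_1(I_a)$, $B = g_2(I_b)$.

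The only non-cosmetic ingredient is the complement identity $Q_n \setminus I_k \cong I_{2^n - k}$, which is where I would expect a careful reader to want justification, and the bit-flip argument handles it in one line; everything else is either double-counting or a direct appeal to Proposition \ref{transitiveaction}.
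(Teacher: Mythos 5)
Your proposal is correct and follows essentially the same route as the paper's own inline argument: double-count to get the $\tfrac{1}{2}\sum|\delta A_i|$ identity, apply Theorem \ref{hypercubeeip} termwise, reduce to the existence of $A \cong I_a$, $B \cong I_b$ with $A \cup B \cong I_{a+b}$, and then normalize by $g_3^{-1}$ via Proposition \ref{transitiveaction}. The only addition is your explicit bit-flip justification of $Q_n \setminus I_k \cong I_{2^n-k}$, a detail the paper leaves implicit but which is a worthwhile clarification.
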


\begin{quest}[Essential Question]
\label{essentialquest} Given positive integers $a, b$ with $a + b \leq 2^n$, when does there exist $g_1, g_2 \in Aut(Q_n)$ such that $g_1(I_a) \cup g_2(I_b) = I_{a + b}$?
\end{quest}

\begin{defn}[Unfit Pairs]
\label{fitunfit} Given a pair of positive integers $(a, b)$ with $a, b \leq 2^n$, we say that $(a, b)$ is \textbf{fit} if it satisfies the condition in Question \ref{essentialquest}. Otherwise, $(a, b)$ is called \textbf{unfit}.
\end{defn}

\begin{obs}
\label{fitobs}
Note that if $(a, b)$ is a fit pair with $a + b + c = 2^n$, then $(b, a)$, $(a, c)$, $(c, a)$, $(b, c)$ and $(c, b)$ are also fit pairs. Therefore, our set of fit pairs displays a 6-fold symmetry.\\ \\
\end{obs}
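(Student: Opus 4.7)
My plan is to observe that fitness of $(a,b)$ is really a property of the unordered triple $\{a,b,c\}$ with $a + b + c = 2^n$, so the six-fold symmetry becomes automatic from $|S_3| = 6$. Concretely, I will argue that $(a, b)$ is fit if and only if there exists a 3-partition $\{A_1, A_2, A_3\}$ of $Q_n$ with $A_1 \cong I_a$, $A_2 \cong I_b$, and $A_3 \cong I_c$. Once this reformulation is in hand, for any permutation $\sigma$ of $\{1, 2, 3\}$ the partition $\{A_{\sigma(1)}, A_{\sigma(2)}, A_{\sigma(3)}\}$ witnesses fitness of the correspondingly permuted pair, and we read off all five symmetric partners of $(a,b)$ at once.

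The reformulation itself is essentially given by Observation \ref{3initpartition}, but rigorously it requires one non-trivial input: if $S \subseteq Q_n$ with $S \cong I_k$, then $Q_n \setminus S \cong I_{2^n - k}$. To see this, note that $|\delta S| = |\delta (Q_n \setminus S)|$, so $S$ minimizes the edge boundary at size $k$ if and only if $Q_n \setminus S$ minimizes it at size $2^n - k$; by Theorem \ref{hypercubeeip} both are then isomorphic to initial segments of the appropriate lengths. Given $(a,b)$ fit with $g_1(I_a) \cup g_2(I_b) = I_{a+b}$, the sets $A_1 := g_1(I_a)$, $A_2 := g_2(I_b)$, $A_3 := Q_n \setminus I_{a+b}$ form the desired partition. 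Conversely, a partition $\{A_1, A_2, A_3\}$ with the stated isomorphisms yields, via Proposition \ref{transitiveaction}, automorphisms $h_1, h_2, h_3 \in \Aut(Q_n)$ with $h_1(I_a) = A_1$, $h_2(I_b) = A_2$, and $h_3(I_{a+b}) = A_1 \cup A_2$, so that $h_3^{-1}h_1(I_a) \cup h_3^{-1}h_2(I_b) = I_{a+b}$ exhibits fitness of $(a, b)$.

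The only non-formal ingredient is the complementation identity above, which rests on Harper's theorem; everything else is bookkeeping with automorphisms. For example, to deduce that $(a,c)$ is fit from $(a,b)$, one applies Proposition \ref{transitiveaction} to the subset $A_1 \cup A_3 \cong I_{a+c}$ (isomorphic to an initial segment by the same complementation argument applied to $A_2 \cong I_b$) and to $A_1, A_3$ individually. I do not anticipate any serious obstacle; the proof is essentially an unpacking of definitions once the complement-of-an-initial-segment fact has been recorded.
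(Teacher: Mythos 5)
Your proposal is correct and takes essentially the same route as the paper: the paper gives no separate argument for this observation, relying on the symmetric 3-partition reformulation already laid out in Observation \ref{3initpartition}, and your write-up is that same argument with the one implicit step --- that the complement of a set isomorphic to an initial segment is again isomorphic to an initial segment, via Theorem \ref{hypercubeeip} --- spelled out explicitly.
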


\section{Basic Tools}
\begin{defn}[Exclusive-Or]
\label{xordef}
Let $x \in Q_n$. For $y \in Q_n$, let $x \oplus y$ denote the exclusive-or operation with inputs $x$ and $y$. Equivalently, if $x$ and $y$ are thought of as elements of $(\mathbb{Z}/2)^n$, this is just component-wise addition. Fixing some $x \in Q_n$, the map $y \mapsto x \oplus y$ is an automorphism of $Q_n$.
\end{defn}

\begin{defn}[Permutation]
\label{permutedef} 
Let $x \in Q_n$ and $\sigma$ be a permutation of the numbers$\{0, 1, ... n - 1\}$. If the binary representation for $x$ is $x_0x_1...x_{n - 1}$, then define $\sigma(x) = x_{\sigma(0)}x_{\sigma(1)}...x_{\sigma(n - 1)}$. For fixed $\sigma$, the map $y \mapsto \sigma(y)$ is again an automorphism of $Q_n$.
\end{defn}

\begin{thm}[Automorphism Decomposition]
\label{autdecomp}
Let $g \in Aut(Q_n)$. Then, there is a unique permutation $\sigma$ and unique $x \in Q_n$ such that $g(y) = x \oplus \sigma(y)$ for all $y \in Q_n$. 

\end{thm}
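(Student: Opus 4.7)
The plan is to split the theorem into existence and uniqueness, and to reduce existence to the statement that any automorphism fixing the all-zero vertex $0^n \in Q_n$ is a coordinate permutation. I would set $x := g(0^n)$ and define $h(y) := x \oplus g(y)$; by \autoref{xordef} this is a composition of automorphisms, and by construction $h(0^n) = 0^n$. If I can produce a $\sigma$ with $h(y) = \sigma(y)$ for all $y$, then $g(y) = x \oplus \sigma(y)$, as required.

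To build $\sigma$, I use that the neighbors of $0^n$ in $Q_n$ are exactly the unit vectors $e_0,\ldots,e_{n-1}$, and that the adjacency-preserving map $h$ must permute this neighbor set. So there is a permutation $\pi$ of $\{0,\ldots,n-1\}$ with $h(e_i) = e_{\pi(i)}$, and I would set $\sigma := \pi^{-1}$. A direct check using \autoref{permutedef} gives $\sigma(e_i) = e_{\sigma^{-1}(i)} = e_{\pi(i)} = h(e_i)$, so $h$ and $\sigma$ already agree on $0^n$ and on every unit vector.

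The main step, and the only nontrivial piece, is to extend this agreement to all of $Q_n$ by induction on the Hamming weight $w(y)$. The cases $w(y) \in \{0,1\}$ are above. For $w(y) = 2$, write $y = e_i \oplus e_j$; then $h(y)$ is a common neighbor of $h(e_i) = e_{\pi(i)}$ and $h(e_j) = e_{\pi(j)}$, and the only two common neighbors of these two vertices in $Q_n$ are $0^n$ and $e_{\pi(i)} \oplus e_{\pi(j)}$. Since $h(0^n) = 0^n$, injectivity forces $h(y) = e_{\pi(i)} \oplus e_{\pi(j)} = \sigma(y)$. For $w(y) \geq 3$, pick three distinct indices $i_1, i_2, i_3$ where $y$ has a $1$; by the inductive hypothesis, $h(y \oplus e_{i_k}) = \sigma(y \oplus e_{i_k})$ for each $k$, so $h(y)$ is a common neighbor of three specified vertices. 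A routine count of common neighbors in $Q_n$ shows that the intersection of the (two-element) common-neighbor sets across the three pairs is exactly $\{\sigma(y)\}$, completing the induction.

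Uniqueness is then immediate: if $x \oplus \sigma(y) = x' \oplus \sigma'(y)$ for all $y$, taking $y = 0^n$ gives $x = x'$, after which $\sigma$ and $\sigma'$ agree as functions on $Q_n$, and evaluating at each $e_i$ recovers the combinatorial permutation uniquely. The principal obstacle throughout is bookkeeping with the convention of \autoref{permutedef}: one must keep straight that the coordinate permutation $\sigma$ acts on basis vectors by $\sigma(e_i) = e_{\sigma^{-1}(i)}$, which is exactly why the correct assignment above is $\sigma := \pi^{-1}$ rather than $\sigma := \pi$.
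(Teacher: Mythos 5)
Your proof is correct, but it takes a genuinely different route from the paper's. The paper's argument is a counting argument: it checks that distinct pairs $(x,\sigma)$ give distinct maps $y \mapsto x \oplus \sigma(y)$, so these account for $2^n \cdot n!$ automorphisms, and then invokes the cited external fact that $|\Aut(Q_n)| = 2^n \cdot n!$ to conclude there are no others. You instead give a direct, self-contained structural argument: normalize $g$ by $x = g(0^n)$ so that $h = x \oplus g$ fixes the origin, read off the permutation $\pi$ from the action on the neighbors of $0^n$, and propagate the identity $h = \sigma$ (with $\sigma = \pi^{-1}$, correctly accounting for the paper's convention $\sigma(e_i) = e_{\sigma^{-1}(i)}$) up through the Hamming-weight levels using the fact that two vertices at distance $2$ have exactly two common neighbors. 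Your weight-$2$ step and weight-$\geq 3$ step both check out, and uniqueness is handled the same way in both proofs (evaluate at $0^n$). What your approach buys is independence from the reference \cite{AutQn}: in fact, combined with the paper's injectivity observation, your argument \emph{proves} that $|\Aut(Q_n)| = 2^n \cdot n!$ rather than assuming it. What the paper's approach buys is brevity, at the cost of outsourcing the only substantive content of the theorem to the literature.
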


\begin{proof}
This is a translation of known results about the structure of $\Aut(Q_n)$ into elementary language.  Clearly, each $\sigma$ in the symmetric group of $n$ elements gives distinct action on $Q_n$, and each map $y \rightarrow x \oplus y$ gives distinct action on $Q_n$. If $x_1 \oplus \sigma_1$ = $x_2 \oplus \sigma_2$, then since the vertex $00\ldots0$ is sent to $x_1$ by the first map and $x_2$ by the second map, $x_1 = x_2$. Consequently, $\sigma_1 = \sigma_2$. Hence, the $x \oplus \sigma$'s give $2^n\cdot n!$ distinct automorphisms. The theorem follows from the fact that $|\Aut(Q_n)| = 2^n \cdot n!$ by \cite{AutQn}.
\end{proof}

\noindent We next introduce the point-counting function central to our discussion.

\begin{defn}[m-Vector]
Let $S \subseteq Q_n, 0 \leq i \leq n-1$. Define $m_i(S)$ to be the total number of 1's that appear on the $i$th position of elements in $S$. Define $m(S)$ to be the vector $(m_0(S), m_1(S), \ldots, m_{n-1}(S))$. 
\end{defn}

\begin{exmp}
$I_5, I_6 \subseteq Q_3$. $m(I_5) = (1, 2, 2),$ and $m(I_6) = (2, 2, 3)$.
\end{exmp}

\begin{lem}[Ordering Lemma]
\label{orderinglemma}
For $m(I_a) = (m_{0},m_{1},...,m_{n - 1}),$ $ m_{0} \leq m_{1} \leq ... \leq m_{n - 1}$. 
\end{lem}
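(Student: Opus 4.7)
The plan is to prove the stronger statement that $m_i(I_a) \le m_j(I_a)$ for \emph{every} pair $0 \le i < j \le n-1$ via a bit-swap injection; taking $j = i+1$ then yields the lemma.

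The key point is the binary-order convention: the coordinate $x_0$ is the most significant bit of $x$, so position $i$ carries place value $2^{n-1-i}$, and the smaller index $i$ corresponds to the \emph{larger} place value. Let $\tau = \tau_{ij} \in \Aut(Q_n)$ be the transposition automorphism (an instance of Definition~\ref{permutedef}) that swaps coordinates $i$ and $j$. If $x \in Q_n$ satisfies $x_i = 1$ and $x_j = 0$, then $\tau(x)$ is obtained from $x$ by moving a single $1$ from a higher to a lower place value, so $\tau(x) < x$ as integers; in particular $x \in I_a \implies \tau(x) \in I_a$.

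Since $\tau$ is an involution, it restricts to an injection
\[
\tau : \{x \in I_a : x_i = 1,\, x_j = 0\} \hookrightarrow \{y \in I_a : y_i = 0,\, y_j = 1\}.
\]
Partitioning $I_a$ by the value of $(x_i, x_j) \in \{0,1\}^2$ and observing that the elements with $x_i = x_j = 1$ contribute equally to $m_i(I_a)$ and $m_j(I_a)$, we obtain
\[
m_i(I_a) - m_j(I_a) = \#\{x \in I_a : x_i = 1,\, x_j = 0\} - \#\{x \in I_a : x_i = 0,\, x_j = 1\} \le 0,
\]
which is the desired inequality.

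The only real pitfall is orientation, i.e.\ making sure the sign of the swap matches the MSB/LSB convention actually in force; I would sanity-check against the worked example $m(I_5) = (1,2,2)$ in $Q_3$ (where $5 = 101$ pulls the one-count toward the low-order bits) before finalizing the write-up.
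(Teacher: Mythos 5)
Your proof is correct, but it is a genuinely different argument from the one in the paper. The paper works column by column: it observes that column $i$ of $I_a$, read top to bottom, is a periodic block pattern of $2^{n-i-1}$ zeros followed by $2^{n-i-1}$ ones (repeating), and then asserts that the partial sums of the finer-blocked column $i+1$ dominate those of column $i$; this is a direct counting argument, though the domination of partial sums is stated rather than fully justified. You instead exploit two structural facts: that $I_a$ is a down-set in the integer order, and that the coordinate transposition $\tau_{ij}$ strictly decreases any $x$ with $x_i=1$, $x_j=0$ when $i<j$ (since position $i$ carries the larger place value $2^{n-1-i}$). This yields an injection from $\{x\in I_a: x_i=1, x_j=0\}$ into $\{y\in I_a: y_i=0, y_j=1\}$ and hence $m_i(I_a)\le m_j(I_a)$ for \emph{all} $i<j$, not just consecutive indices (though the general case also follows from the consecutive case by transitivity). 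Your orientation check is right: with the paper's convention $x=x_0x_1\ldots x_{n-1}$, index $0$ is the most significant bit, consistent with $m(I_6)=(2,2,3)$. What your approach buys is robustness and rigor --- it uses only that $I_a$ is an initial segment of the integer order, with no bookkeeping about block boundaries when $a$ is not a multiple of the period; what the paper's approach buys is that the explicit block description of each column is reused later (e.g.\ in Lemma~\ref{m-table min}), so it earns its keep elsewhere.
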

\begin{proof}
By the construction of initial segments, the sequence of $0$s and $1$s in a column $i$ can be written as a sequence of $2^{n-i-1}$ $0$s followed by $2^{n-i-1}$ $1$s. Thus, the partial sum of the sequence in column $i+1$ must be greater than the corresponding partial sum of the sequence in column $i$. 
\end{proof}

\noindent The action of the automorphism on the m-vector can be easily described. 

\begin{prop}
\label{mvectoraction}
Let $S \subseteq Q_n$, and $m(S) = (m_0, m_1, \ldots, m_{n-1})$
\begin{enumerate}
\item Let $x = x_0 x_1 \ldots x_{n-1} \in Q_n$. Let $J$ denote the set of index $j$ such that $x_j = 1$. Then 
$$m_j(x \oplus S) = \left\{ \begin{array}{rcl}
|S| - m_j(S) & \mbox{if}
& j \in J \\ m_j(S) & \mbox{if} & j \not\in J 
\end{array}\right.$$

\item Let $\sigma \in S_n$. Then $m(\sigma(S)) = \sigma(m(S))$, where $\sigma$ permutes the $n$ elements in the $m-vector$.
\end{enumerate}
\end{prop}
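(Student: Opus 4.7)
The plan is to prove each of the two claims by straightforward coordinate-wise bookkeeping, exploiting the fact that both the XOR action and the permutation action on $Q_n$ decouple across positions. Since $m_j(S)$ depends only on what happens in the $j$-th coordinate of each element of $S$, it suffices to track how the two automorphism types alter the $j$-th coordinate.

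For part (1), I would fix a coordinate index $j$ and split into cases based on whether $j \in J$. If $j \notin J$, then $x_j = 0$ and so for every $s \in S$, $(x \oplus s)_j = 0 \oplus s_j = s_j$; summing over $s \in S$ shows $m_j(x \oplus S) = m_j(S)$. If $j \in J$, then $x_j = 1$ and $(x \oplus s)_j = 1 \oplus s_j = 1 - s_j$, so the number of $1$'s in the $j$-th coordinate of $x \oplus S$ is exactly the number of $s \in S$ with $s_j = 0$, namely $|S| - m_j(S)$. The key is just that XOR with a fixed string acts on each coordinate independently, flipping exactly those positions indicated by $J$.

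For part (2), I would compute directly from the definition of $\sigma$: since $(\sigma(s))_j = s_{\sigma(j)}$, summing over $s \in S$ gives $m_j(\sigma(S)) = \sum_{s \in S} s_{\sigma(j)} = m_{\sigma(j)}(S)$. On the other hand, if $\sigma$ acts on vectors of length $n$ by the same rule $(\sigma(v))_j = v_{\sigma(j)}$, then $(\sigma(m(S)))_j = m_{\sigma(j)}(S)$, and the two expressions agree for every $j$. I would note explicitly that one must use the same convention for $\sigma$ acting on $Q_n$ and on $m$-vectors for the identity $m(\sigma(S)) = \sigma(m(S))$ to hold as written.

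There is no real obstacle here; both statements are essentially unpacking the definitions of $x \oplus S$, $\sigma(S)$, and $m(S)$. The only place where care is needed is to keep the indexing convention for $\sigma$ consistent between its action on binary strings and its action on integer vectors in part (2); once that is pinned down, the verification is a single line per coordinate. I would keep the proof to about half a page, with the two parts presented in turn.
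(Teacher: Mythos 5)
Your proof is correct and follows essentially the same route as the paper's: case on whether $j \in J$ and observe that XOR flips the $j$-th bit (turning the count of $1$'s into the count of $0$'s, i.e.\ $|S| - m_j(S)$), while part (2) is a direct unpacking of the definition of $\sigma$. Your explicit attention to keeping the indexing convention $(\sigma(x))_j = x_{\sigma(j)}$ consistent between strings and $m$-vectors is a welcome bit of extra care that the paper's one-line treatment of part (2) omits.
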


\begin{proof}
Fix $j$. If $j \in J$, then $x \oplus S$ turns all $0$'s into $1$'s and all $1$'s into $0$'s. So the new number of $1$'s is simply the number of $0$'s before applying the automorphism. Thus, $m_j(x \oplus S) = |S| - m_j(S)$. The other parts of the proposition follows directly from definition.
\end{proof}

\noindent Note that for any $g \in \Aut Q_n$ and $S\subseteq Q_n$, $m(g(S))$ can be described using only the information of $m(S)$ and $|S|$. Thus, if $m(S_1)$ = $m(S_2)$ and $|S_1|=|S_2|$, then $m(g(S_1)) = m(g(S_2))$. \\

\noindent The following proposition shows an important property unique to the initial segments.

\begin{prop}
\label{uniqueness prop}
Let $I_a \subseteq Q_n$.
Let $S \subseteq Q_n$, $|S| = a$. Then $m(I_a) = m(S)$ if and only if $I_a = S$.
\end{prop}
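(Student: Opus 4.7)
The ``if'' direction is immediate, so the plan is to prove the forward implication by induction on $n$. The base case $n = 0$ (or $n = 1$) is trivial. For the inductive step, I would split $Q_n$ according to the leading coordinate, writing $Q_n = Q_n^0 \sqcup Q_n^1$ where $Q_n^0$ consists of strings beginning with $0$ and $Q_n^1$ those beginning with $1$. Because the binary order traverses $Q_n^0$ entirely before $Q_n^1$, one computes directly that $m_0(I_a) = \max(0, a - 2^{n-1})$. The hypothesis $m_0(S) = m_0(I_a)$ then equates $|S \cap Q_n^1|$ with $|I_a \cap Q_n^1|$, and hence also $|S \cap Q_n^0| = |I_a \cap Q_n^0|$.

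First I would dispose of the case $a \leq 2^{n-1}$, where the count above forces $S \subseteq Q_n^0$. Identifying $Q_n^0$ with $Q_{n-1}$ by erasing the leading $0$, the remaining entries $m_1(S), \ldots, m_{n-1}(S)$ become precisely the $m$-vector of $S$ viewed inside $Q_{n-1}$, and likewise for $I_a$; the inductive hypothesis then yields $S = I_a$. Second I would handle $a > 2^{n-1}$: the count forces $|S \cap Q_n^0| = 2^{n-1}$, so $S \cap Q_n^0 = Q_n^0$. Subtracting the constant contribution $2^{n-2}$ that the full cube $Q_n^0$ makes to each coordinate $m_j(\cdot)$ with $j \geq 1$, one finds that $S \cap Q_n^1 \subseteq Q_n^1 \cong Q_{n-1}$ has the same $m$-vector as the initial segment of length $a - 2^{n-1}$ inside $Q_{n-1}$, and the inductive hypothesis closes this case as well.

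The hard part will be keeping the bookkeeping straight: one must track the index shift when passing from $Q_n$ to $Q_{n-1}$, and the constant offset $2^{n-2}$ contributed by a completely filled half-cube to every non-leading coordinate. Beyond this, no deeper machinery is needed---the argument rests on nothing more than the recursive ``big-endian'' structure of the binary order together with the additivity $m_j(A \sqcup B) = m_j(A) + m_j(B)$ for disjoint unions. In particular the Ordering Lemma is not invoked directly, though it provides a useful sanity check that the $m$-vectors one works with are indeed weakly increasing throughout the induction.
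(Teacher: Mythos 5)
Your argument is correct, but it is not the route the paper takes. You induct on $n$ using the recursive structure of the binary order: the leading coordinate $m_0$ pins down how $S$ splits across the two half-cubes $F_{0,0}$ and $F_{0,1}$, and in either case ($a\le 2^{n-1}$, where $S$ lies entirely in the bottom half, or $a>2^{n-1}$, where the bottom half is completely filled and contributes a constant $2^{n-2}$ to every $m_j$ with $j\ge 1$) the problem reduces to the same statement one dimension down. The paper instead gives a one-shot, non-inductive argument via a single linear functional: the weighted sum $\sum_{x\in S}x=\sum_{j}2^{\,n-1-j}m_j(S)$ is determined by $m(S)$, and $I_a=\{0,1,\dots,a-1\}$ is the \emph{unique} size-$a$ subset minimizing it, so equal $m$-vectors force $S=I_a$. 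The paper's proof is shorter and in fact proves something slightly stronger --- only one particular linear combination of the coordinates of $m(S)$ is needed, not the whole vector --- whereas your induction uses each coordinate in turn. On the other hand, your decomposition by hyperface is exactly the mechanism the paper relies on later (Lemma \ref{Reduce to hyperface} and the induction proving Theorem \ref{main theorem}), so your proof fits naturally into the paper's overall toolkit even though it is heavier on bookkeeping here. Both proofs are complete and elementary; there is no gap in yours.
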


\begin{proof} We use the idea of a weighted sum. For any subset $S \subseteq Q_n$, each element may be regarded as the binary representation of an integer. The weighted sum of $S$ is referred to as the sum of all such integers. By definition, this sum is also $2^{n-1}m_0(S) + 2^{n-2}m_1(S) + \cdots + 2^0 m_{n-1}(S)$. \\

\noindent If $I_a=S$, then $m(I_a)=m(S)$. Conversely, let $I_a \subseteq Q_n$. Suppose $S \subseteq Q_n$, $|S| = a$, and $m(I_a) = m(S)$. Observe that $I_a$ is the unique subset of $Q_n$ having the smallest weighted sum among subsets of size at least $a$ since $I_a$ represents the integers $\{0,1,\ldots, a-1\}$. Since $m(I_a) = m(S)$, $I_a$ and $S$ must have the same weighted sum, which implies that $I_a = S$.
\end{proof}

\begin{prop}
\label{left multiplication}
Let $S_1, S_2, \ldots, S_p, T_1, T_2, \ldots, T_q \subseteq Q_n$, where $|S_1| + \cdots + |S_p| = |T_1| + \cdots + |T_q|$. Let $g \in Aut(Q_n)$. Then $$m(S_1) + \cdots + m(S_p) = m(T_1) + \cdots + m(T_q)$$ if and only if $$m(g(S_1)) + \cdots + m(g(S_p)) = m(g(T_1)) + \cdots + m(g(T_q)).$$
\end{prop}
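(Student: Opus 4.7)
The plan is to decompose the arbitrary automorphism $g$ into its two canonical building blocks via \autoref{autdecomp}, write $g = x \oplus \sigma$ for a unique $x \in Q_n$ and permutation $\sigma \in S_n$, and then show that each of the two types of operations (XOR by $x$, permutation by $\sigma$) preserves the equation individually. Since both XOR and permutation are automorphisms in their own right, and the statement is of an ``if and only if'' form that is symmetric (we can apply $g^{-1}$ to go the other way), it will be enough to prove the forward implication for each of the two building blocks.

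For the permutation part, \autoref{mvectoraction}(2) gives $m(\sigma(S_i)) = \sigma(m(S_i))$ coordinate-wise, so summing yields $\sum_i m(\sigma(S_i)) = \sigma\bigl(\sum_i m(S_i)\bigr)$ and likewise for the $T_j$'s. Since $\sigma$ is simply reindexing the coordinates of a vector, the identity on $m$-vectors is preserved. For the XOR part, \autoref{mvectoraction}(1) says that on each coordinate $j \in J$ (where $J$ is the support of $x$), $m_j(x \oplus S_i) = |S_i| - m_j(S_i)$, while the coordinates outside $J$ are unchanged. Summing over $i$, I get on coordinate $j \in J$:
\begin{equation*}
\sum_{i=1}^p m_j(x \oplus S_i) = \Bigl(\sum_{i=1}^p |S_i|\Bigr) - \sum_{i=1}^p m_j(S_i),
\end{equation*}
and analogously for the $T$'s. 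Since the hypothesis $\sum_i |S_i| = \sum_j |T_j|$ means that the same constant is subtracted from both sides, the equality of the summed $m$-vectors is preserved on coordinates in $J$; on coordinates outside $J$ nothing changes, so the full vector equality is preserved.

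Combining the two pieces, applying first $\sigma$ and then XOR by $x$ (or in whatever order corresponds to the decomposition) preserves the equation on $m$-vectors. For the reverse direction I would simply apply the same argument to $g^{-1}$, which is itself an automorphism and also preserves the cardinality hypothesis since automorphisms are bijections. The main (and essentially only) obstacle is being careful with the bookkeeping: ensuring that the hypothesis on equal total cardinalities is used exactly where needed (in the XOR step), and that the cardinality hypothesis continues to hold after applying $g$, so that iterating or inverting the argument is legitimate. This is immediate because automorphisms preserve $|S|$, so $|g(S_i)| = |S_i|$ and the sums stay equal.
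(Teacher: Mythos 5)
Your proposal is correct and is essentially the paper's own argument: the paper packages the same computation as an additive action $g(v,k)$ on pairs (m-vector, cardinality) built from \autoref{mvectoraction}, with the equal-total-cardinality hypothesis doing exactly the work you assign it in the XOR step, and the converse likewise obtained by applying $g^{-1}$. Your version just unfolds that additivity explicitly over the two generators $x\oplus{}$ and $\sigma$, which is a presentational rather than a mathematical difference.
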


\begin{proof} Let $g = x \oplus \sigma \in Aut(Q_n)$. The image $m(g(S))$ is uniquely determined by $m(S)$ and $|S|$, so we may define a natural action $g(v, n)$ on the set of pairs $(v, n)$, $v \in \Z^n_{\geq 0}$ and $n \in \Z_{\geq 0}$ using the rules in Proposition \ref{mvectoraction}. Moreover, regarding this set as a semigroup under addition, we also have $g(v+w, n+m) = g(v,n) + g(w,m)$ again by Proposition \ref{mvectoraction}. \\

\noindent Suppose, $m(S_1) + \cdots + m(S_p) = m(T_1) + \cdots + m(T_q)$. Then
\begin{align*}
m(g(S_1)) + \cdots + m(g(S_p)) &= g(m(S_1), |S_1|) + \cdots + g(m(S_p), |S_p|) \\
&= g(m(S_1) + \cdots + m(S_p), |S_1| + \cdots + |S_p|) \\
&= g(m(T_1) + \cdots + m(T_q), |T_1| + \cdots + |T_q|) \\
&= g(m(T_1), |T_1|) + \cdots + g(m(T_q), |T_q|) \\
&= m(g(T_1)) + \cdots + m(g(T_q)).
\end{align*}
\noindent The "if" direction is directly obtained by applying $g^{-1}$ to each term and using the "only if" direction.
\end{proof}

\noindent Next, we introduce a visual representation of how the automorphism group acts on the m-vectors.

\begin{defn}[m-Table]
Let $S \subseteq Q_n$, $m(S) = (m_0, m_1, \ldots, m_{n-1})$. Then the m-table of $S$ is defined as 

$$\left( \begin{array}{cccc} m_0 & m_1 & \ldots & m_{n-1} \\
|S| - m_0 & |S| - m_1 & \ldots & |S| - m_{n-1} \end{array} \right)$$

\noindent The rows will be referred to as row $0$ and row $1$. The $j$th column will refer to the column that corresponds to $m_j$. The entry in $(i,j)$ denotes the entry in row $i$ and column $j$. 
\end{defn}

\noindent By the ordering lemma, if $(m_0, m_1, \ldots, m_{n-1})$ is the m-vector of $I_a$, then $m_0 \leq m_1 \leq \cdots \leq m_{n-1}$. Furthermore, $m_{n-1} \leq a/2$, which means that $m_{n-1} \leq a - m_{n-1}$. Again by the ordering lemma, $a - m_{n-1} \leq \cdots \leq a - m_1 \leq a - m_0$. This describes the total order on the entries of the m-table of $I_a$.

\begin{lem}
\label{m-table min}
Let $I_a \subseteq Q_n$, and let $\alpha$ be the smallest integer such that $m_{\alpha}(I_a) \neq 0$. Then the smallest non-zero number in the m-table of $I_a$ is $a - 2^{n-1-\alpha}$.
\end{lem}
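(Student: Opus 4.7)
The plan is to identify the smallest non-zero entry of each row of the m-table of $I_a$ separately and then take the minimum. The discussion immediately preceding the statement already establishes the total ordering
\[
0 \leq m_0 \leq m_1 \leq \cdots \leq m_{n-1} \leq a - m_{n-1} \leq \cdots \leq a - m_0
\]
on the m-table entries, so by the definition of $\alpha$ and the Ordering Lemma the smallest non-zero entry in row $0$ is $m_\alpha$. Moreover, since $00\cdots 0 \in I_a$ we have $a - m_i \geq 1$ for every $i$, so the smallest entry in row $1$ is $a - m_{n-1}$. It therefore suffices to compute $m_\alpha$, bound $a - m_{n-1}$, and compare.

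The main computation is $m_\alpha(I_a) = a - 2^{n-1-\alpha}$. The key observation is that column $i$ of $I_a$ records the $i$-th most significant bit of the integers $0, 1, \ldots, a-1$, which forms a pattern of $2^{n-1-i}$ zeros followed by $2^{n-1-i}$ ones, of period $2^{n-i}$. Consequently $m_i(I_a) = 0$ if and only if $a \leq 2^{n-1-i}$. The minimality of $\alpha$ then yields $2^{n-1-\alpha} < a \leq 2^{n-\alpha}$, where the upper bound is trivial when $\alpha = 0$ and otherwise comes from $m_{\alpha - 1} = 0$. Hence the first $a$ entries of column $\alpha$ fall within a single zeros-then-ones block, and a direct count gives $m_\alpha = a - 2^{n-1-\alpha}$.

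It remains to verify $m_\alpha \leq a - m_{n-1}$, equivalently $m_{n-1} \leq 2^{n-1-\alpha}$. By the Ordering Lemma $m_{n-1} \leq a/2$, and by the previous paragraph $a \leq 2^{n-\alpha}$, so the bound is immediate. Combining the three steps, the smallest non-zero entry of the m-table equals $m_\alpha = a - 2^{n-1-\alpha}$.

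The only subtlety I foresee is correctly extracting the sharp bound $a \leq 2^{n-\alpha}$ from the definition of $\alpha$, with attention to the edge case $\alpha = 0$ (where the bound is vacuous but still holds). The rest is routine bookkeeping built on the Ordering Lemma and the explicit periodic structure of initial segments.
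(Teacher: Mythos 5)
Your proof is correct and follows essentially the same route as the paper: both identify column $\alpha$ as a block of $2^{n-1-\alpha}$ zeros followed by $a-2^{n-1-\alpha}$ ones (using the minimality of $\alpha$ to pin $a$ into the range $2^{n-1-\alpha} < a \leq 2^{n-\alpha}$) and then invoke the Ordering Lemma to see that $m_\alpha$ is minimal among the non-zero entries. Your version is somewhat more explicit than the paper's about the second row of the m-table (the comparison $m_\alpha \leq a - m_{n-1}$), which the paper leaves to the total-order discussion preceding the lemma, but the underlying argument is the same.
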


\begin{proof}
Let $I_a \subseteq Q_n$, and let $\alpha$ be the smallest integer such that $m_{\alpha}(I_a) \neq 0$. Consider the representation of an intial segment $I_a$ as the set of $n$ sequences $\{M_0, \cdots M_n \}$, where $M_i$ is the sequence of $0$s and $1$s in a column $i$. Each sequence $M_i$ can be written as a sequence of $2^{n-i-1}$ $0$s followed by $2^{n-i-1}$ $1$s followed by $2^{n-i-1}$ $0$s $\cdots$ until the sequence contains $a$ terms. Because $\alpha$ is the smallest integer such that $m_{\alpha}(I_a) \neq 0$, 
$M_\alpha$ consists of $2^{n-\alpha-1}$ $0$s followed by $a-2^{n-\alpha-1} = m_\alpha(I_a)$ $1$s.   
Now for all $M_{\beta}$ with $\beta > \alpha$, $m_\alpha \leq m_\beta$ by the ordering lemma.
\end{proof}

\begin{defn}[Proper Labeling]
A proper m-table labeling in $Q_n$ is a labeling that assigns each label from $\{0, 1, \ldots, n-1\}$ to exactly one entry of the $2$ by $n$ table, with one label per column.
\end{defn}

\noindent There is a natural bijection between $Aut(Q_n)$ and the set of proper $m$-table labelings. In particular, if $g = x \oplus \sigma \in Aut(Q_n)$, where $x = x_0 x_1 \ldots x_{n-1}$, then the corresponding proper m-table labeling is the labeling that assigns label $i \in \{0, 1, \ldots, n-1\}$ to the row $x_i$ and the column $\sigma^{-1}(i)$. If $S \subseteq Q_n$, and $a_i$ is the entry in the m-table of S labeled $i$, then $m(g(S)) = (a_0, a_1, \ldots, a_{n-1})$. Thus, the action of $g$ on $m(I_a)$ can be visually represented by the m-table.

\begin{defn}
Let $S_1 \subseteq Q_n$, $g \in Aut(Q_n)$. Then the \textbf{g-labeled m-table} of $S_1$ will denote the m-table of $S_1$ with labeling given by $g_1$. If the entries $(i_1, j_1)$ in the labeled m-table of $S_1$ under $g$ and $(i_2, j_2)$ in the labeled m-table of $S_2$ under $g'$ are given the same label, then $(i_1, j_1)$ is said to be \textbf{paired} with $(i_2, j_2)$.
\end{defn}

\begin{exmp} Let $x = 101 \in Q_3$, and $\sigma = (012) \in S_3$ in cycle notation. Then the m-table of $I_6$ and the corresponding labeling is

$$\left( \begin{array}{ccc} 2^{[1]} & 2 & 3 \\
4 & 4^{[2]} & 3^{[0]} \end{array} \right).$$
The information in $m(g_1(I_a)) + m(g_2(I_b)) = m(I_{a+b})$ can also be encoded in the $m$-table. For example,

$$\left( \begin{array}{ccccc} 0^{[1]} & 1^{[2]} & 4 & 4^{[3]} & 4^{[4]} \\
9 & 8 & 5^{[0]} & 5 & 5 \end{array} \right) \text{  and  } \left( \begin{array}{ccccc} 0^{[0]} & 4 & 4 & 6^{[3]} & 6^{[4]} \\
12 & 8^{[1]} & 8^{[2]} & 6 & 6 \end{array} \right)$$

\noindent indicates that $m(g_1(I_9)) + m(g_2(I_{12})) = m(I_{21})$ for the $g_1$ and $g_2$ corresponding to the labeling on the $m$-table, since $m(I_{21}) = (5, 8, 9, 10, 10)$ and that the labeled pairs add up correctly. \\ \end{exmp}

\noindent We end this section by an observation that will be often used without mentioning. Let $I_a \subseteq Q_n$, $g \in Aut(Q_n)$. If the first $k$ entries in $m(g(I_a))$ are $0$, then there exists $g' \in Aut(Q_n)$ such that $g(I_a) = g'(I_a)$ and $g'$ restricts to an automorphism of $Q_{n-k}$. This $g'$ can be obtained by acting as identity on the first $k$ coordinates and copying the action of $g$ on the last $n-k$ coordinates. Therefore, when $g$ does not affect the first $k$ entries, we may without loss of generality identify $g$ as an element of $Aut(Q_{n-k})$.\\ \\

\section{Proof of Main Theorem}
\noindent In this section, we'll state and prove our main result, which gives a criterion to decide whether a pair $(a, b)$ fits in $Q_n$. This result allows us to fully classify all unfit pairs.

\begin{thm}
\label{main theorem}
Let $I_a, I_b, I_{a+b} \subseteq Q_n$, and $g_1, g_2 \in Aut(Q_n)$. Then $m(g_1(I_a)) + m(g_2(I_b)) = m(I_{a+b})$ if and only if $g_1(I_a) \cup g_2(I_b) = I_{a+b}$.
\end{thm}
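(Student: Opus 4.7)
The forward direction (``$\Leftarrow$'') is immediate: if $g_1(I_a)\cup g_2(I_b)=I_{a+b}$, then the equality
\[ |g_1(I_a)|+|g_2(I_b)| \;=\; a+b \;=\; |I_{a+b}| \;=\; |g_1(I_a)\cup g_2(I_b)| \]
forces $g_1(I_a)$ and $g_2(I_b)$ to be disjoint, after which additivity of $m_j$ over disjoint unions yields $m(g_1(I_a))+m(g_2(I_b))=m(I_{a+b})$.

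For the reverse direction (``$\Rightarrow$''), set $S:=g_1(I_a)$ and $T:=g_2(I_b)$ and assume $m(S)+m(T)=m(I_{a+b})$. The plan is to reduce the entire conclusion to the single claim $S\cap T=\emptyset$. Granting disjointness, additivity gives $m(S\cup T)=m(S)+m(T)=m(I_{a+b})$ and $|S\cup T|=a+b=|I_{a+b}|$, so Proposition~\ref{uniqueness prop} then forces $S\cup T=I_{a+b}$. The whole argument therefore hinges on showing that the two sets do not overlap.

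To prove $S\cap T=\emptyset$, I would induct on $n$, with the base case $n=1$ handled by direct inspection. For the inductive step, let $\gamma:=\min\{j:m_j(I_{a+b})>0\}$. Because $m_j(S),m_j(T)\ge 0$ and $m_j(S)+m_j(T)=m_j(I_{a+b})=0$ for $j<\gamma$, both vanish on those coordinates. By the observation at the end of Section~2, $g_1$ and $g_2$ can then be chosen to act trivially on the first $\gamma$ coordinates, so $S$, $T$, and $I_{a+b}$ all embed into a single copy of $Q_{n-\gamma}$, and the hypothesis is preserved in the smaller cube. Whenever $\gamma\ge 1$, the inductive hypothesis immediately supplies $S\cap T=\emptyset$.

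The main obstacle is the remaining case $\gamma=0$, i.e.\ $a+b>2^{n-1}$, where no leading-zero reduction exists. Here my plan is to complement through the involution $g_0:=(1,\ldots,1)\oplus\mathrm{id}$, exploiting $Q_n\setminus I_{a+b}=g_0(I_c)$ with $c:=2^n-a-b<2^{n-1}$ and the identity $m_j(Q_n\setminus U)=2^{n-1}-m_j(U)$. Via Proposition~\ref{left multiplication}, one can then repackage the original $m$-equation as an analogous equation attached to the complementary triple $(a,b,c)$ in which the new effective ``combined size'' is at most $2^{n-1}$; this puts us back in the already-handled regime $\gamma\ge 1$, and pulling back through $g_0$ together with the six-fold symmetry from Observation~\ref{fitobs} delivers disjointness of $S$ and $T$. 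The most delicate point will be checking that any hypothetical overlap $S\cap T$ transports faithfully to a genuine obstruction in the complementary picture, so that the inductive hypothesis applies without information loss.
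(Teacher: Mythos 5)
Your forward direction and your reduction of the converse to the single claim $S\cap T=\emptyset$ (via Proposition~\ref{uniqueness prop}) are both correct, and the inductive reduction when $\gamma\ge 1$ matches the paper's. The genuine gap is the case $\gamma=0$, i.e.\ $a+b>2^{n-1}$, which is where essentially all of the work lies. Your complementation plan cannot land you back in the regime ``combined size at most $2^{n-1}$'': writing $c=2^n-a-b$, the three pairwise sums are $a+b=2^n-c$, $a+c=2^n-b$, $b+c=2^n-a$, so in the hardest subcase $\max\{a,b\}<2^{n-1}$ (which, together with $a+b>2^{n-1}$, forces $c<2^{n-1}$ as well) \emph{every} pairwise sum exceeds $2^{n-1}$, and no choice of complementary pairing reduces the problem to the already-handled regime. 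Moreover, invoking the six-fold symmetry of Observation~\ref{fitobs} at this point is circular: that symmetry is a statement about the union/partition property, i.e.\ about the conclusion you are trying to establish, and to use it here you would need an $m$-vector-level analogue of it for the \emph{specific} automorphisms $g_1,g_2$ at hand --- which is essentially the theorem itself.

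What the paper does instead at $\gamma=0$ is prove Proposition~\ref{main lemma}: there exists a hyperface $F_{j,k}$ on which the $m$-equation restricts, i.e.\ $m(g_1(I_a)\cap F_{j,k})+m(g_2(I_b)\cap F_{j,k})=m(I_{a+b}\cap F_{j,k})$; combined with Lemma~\ref{Reduce to hyperface}, this pushes the induction into $Q_{n-1}$ on both halves $F_{j,k}$ and $F_{j,1-k}$. Locating that hyperface is the hard content of the proof (the case split on $\max\{a,b\}$ and on whether $m_0(g_1(I_a))$ and $m_0(g_2(I_b))$ vanish, together with the $m$-table analysis of Lemma~\ref{zero matching} and the estimates of Lemma~\ref{computational results}), and your proposal contains no substitute for it. Your own closing caveat --- that a hypothetical overlap must ``transport faithfully'' to the complementary picture --- is precisely the point at which the argument breaks down.
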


\begin{defn}
\label{hyperface}
Let $F_{j, k} \subseteq Q_n$ denote the hyperface
$$\{x_0 x_1 \ldots x_{n-1} \in Q_n: x_j = k\}.$$
\end{defn}

\begin{lem}
\label{hyperface rep} Let $S \subseteq Q_n$. Then $m_j(S) = |S \cap F_{j, 1}|$, and $|S| - m_j(S) = |S \cap F_{j, 0}|$. In particular, if $I_a, I_b, I_{a+b} \subseteq Q_n$, and $g_1, g_2 \in Aut(Q_n)$, then $m(g_1(I_a)) + m(g_2(I_b)) = m(I_{a+b})$ if and only if
$$|g_1(I_a) \cap F_{j,k}| + |g_2(I_b) \cap F_{j,k}| = |I_{a+b} \cap F_{j,k}|$$
for each hyperface $F_{j,k}$.
\end{lem}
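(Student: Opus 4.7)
The plan is to first verify the two pointwise identities $m_j(S) = |S \cap F_{j,1}|$ and $|S| - m_j(S) = |S \cap F_{j,0}|$ directly from the definitions, and then deduce the ``in particular'' statement componentwise by applying these identities to each $m_j$ separately.

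For the first part, observe that by the definition of $m_j$, the quantity $m_j(S)$ counts precisely those elements of $S$ whose $j$-th coordinate equals $1$. Since $F_{j,1}$ is by definition the set of \emph{all} elements of $Q_n$ with $j$-th coordinate equal to $1$, the elements of $S$ contributing to $m_j(S)$ are exactly those in $S \cap F_{j,1}$, so $m_j(S) = |S \cap F_{j,1}|$. The two hyperfaces $F_{j,0}$ and $F_{j,1}$ partition $Q_n$, hence $|S \cap F_{j,0}| + |S \cap F_{j,1}| = |S|$, which rearranges to $|S| - m_j(S) = |S \cap F_{j,0}|$.

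For the second part, the vector equation $m(g_1(I_a)) + m(g_2(I_b)) = m(I_{a+b})$ is equivalent to the $n$ scalar equations
\[
m_j(g_1(I_a)) + m_j(g_2(I_b)) = m_j(I_{a+b}), \qquad j = 0, 1, \ldots, n-1.
\]
Applying the first part with $k = 1$ translates each of these into the hyperface identity $|g_1(I_a) \cap F_{j,1}| + |g_2(I_b) \cap F_{j,1}| = |I_{a+b} \cap F_{j,1}|$. The $k=0$ version is not independent: since automorphisms preserve cardinality, $|g_1(I_a)| = a$ and $|g_2(I_b)| = b$, so subtracting the $k=1$ identity from $a + b = (a+b)$ produces the $k=0$ identity, and the reverse subtraction recovers $k=1$. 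Thus requiring the hyperface equation for all pairs $(j,k)$ is equivalent to requiring the scalar $m_j$ equations for all $j$, which in turn is the vector equation.

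The argument is essentially bookkeeping; there is no real obstacle. The one point worth making explicit is that the $k=0$ and $k=1$ hyperface equations carry the same content because $F_{j,0} \sqcup F_{j,1} = Q_n$ and the three sets $g_1(I_a)$, $g_2(I_b)$, $I_{a+b}$ have the fixed sizes $a$, $b$, $a+b$.
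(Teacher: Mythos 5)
Your proof is correct and follows essentially the same route as the paper's: verify $m_j(S) = |S \cap F_{j,1}|$ directly from the definition, use the partition $F_{j,0} \sqcup F_{j,1} = Q_n$ for the complementary identity, and read off the ``in particular'' clause componentwise. Your explicit remark that the $k=0$ equations are redundant given the fixed cardinalities $a$, $b$, $a+b$ is a small elaboration the paper leaves implicit, but the substance is identical.
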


\begin{proof} The first statement follows from the definition of the hyperface $F_{j, 1}$. $F_{j, 1}$ consists of the elements of $Q_n$ which have a $1$ in position $j$. Therefore, $S \cap F_{j, 1}$ are the elements in $S$ with a $1$ in position $j$. $F_{j, 1}$ and $F_{j, 0}$ partition $Q_n$ so both equalities follow. The second statement is then immediate.
\end{proof}

\noindent Next, we have a lemma allowing us to restrict our attention to the hyperface $F_{0, 0}$, which will help to simplify our proof.

\begin{lem} 
\label{Reduce to hyperface}
Let $I_a, F_{j,k} \subseteq Q_n$, $g_1 \in Aut(Q_n)$, and $a' = |g_1(I_a) \cap F_{j,k}|$. There is a map $h \in Aut(Q_n)$ such that $h(F_{0, 0}) = g_1^{-1}(F_{j,k})$ and $h(I_{a'}) = I_a \cap g_1^{-1}(F_{j,k})$.
\end{lem}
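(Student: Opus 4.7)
The plan is to construct $h$ explicitly using Theorem \ref{autdecomp}. Because both XOR shifts and coordinate permutations send hyperfaces to hyperfaces, the set $F' := g_1^{-1}(F_{j,k})$ is itself some hyperface, say $F_{j',k'}$, and $|I_a \cap F'| = a'$ since $g_1^{-1}$ is a bijection. So it suffices to build $h \in Aut(Q_n)$ with $h(F_{0,0}) = F_{j',k'}$ and $h(I_{a'}) = I_a \cap F_{j',k'}$. My candidate is the automorphism that, on elements of $F_{0,0}$, deletes the leading $0$ coordinate and inserts $k'$ at position $j'$: concretely, take $h = y \oplus \pi$, where $\pi \in S_n$ is the cyclic permutation $0 \mapsto 1 \mapsto \cdots \mapsto j' \mapsto 0$ fixing the remaining indices, and $y \in Q_n$ has $y_{j'} = k'$ with zeros elsewhere. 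A direct computation gives $h(0\, x_1 \cdots x_{n-1}) = x_1 \cdots x_{j'}\, k'\, x_{j'+1} \cdots x_{n-1}$, from which $h(F_{0,0}) = F_{j',k'}$ is immediate.

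For $h(I_{a'}) = I_a \cap F_{j',k'}$, note first that $a' \leq 2^{n-1}$, hence $I_{a'} \subseteq F_{0,0}$ and the formula above applies on all of $I_{a'}$. The crux is to show that the ``insertion'' map $\iota : Q_{n-1} \to F_{j',k'}$ given by $\iota(x_1 \cdots x_{n-1}) = x_1 \cdots x_{j'}\, k'\, x_{j'+1} \cdots x_{n-1}$ is strictly order-preserving with respect to the binary order on $Q_{n-1}$ and the order on $F_{j',k'}$ induced from $Q_n$. This follows from a short case analysis on whether the highest bit at which two inputs disagree lies before or after position $j'$: inserting the same bit at a fixed position either shifts the differing position uniformly by one or leaves it in place, and in either case preserves which input is larger. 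Hence $\iota$ bijectively sends the $a'$ smallest elements of $Q_{n-1}$ to the $a'$ smallest elements of $F_{j',k'}$ in the $Q_n$-induced order. Meanwhile $I_a$ is downward-closed in the binary order on $Q_n$, so $I_a \cap F_{j',k'}$ is also precisely the $a'$ smallest elements of $F_{j',k'}$ in that order; combining the two equalities yields $h(I_{a'}) = I_a \cap F_{j',k'}$.

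I expect the monotonicity of $\iota$ to be the main (though still routine) obstacle; everything else is a direct unpacking of definitions or a quick appeal to Theorem \ref{autdecomp}.
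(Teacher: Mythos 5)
Your proof is correct, but it takes a genuinely different route from the paper's. The paper observes that $S := I_a \cap g_1^{-1}(F_{j,k})$ consists of all elements of the hyperface below its largest element (since $I_a$ is downward closed), concludes $S \cong I_{a'}$ as induced subgraphs, and then invokes Proposition \ref{transitiveaction} to produce $h$; the required property $h(F_{0,0}) = g_1^{-1}(F_{j,k})$ is asserted rather than derived from the construction. You instead build $h = y \oplus \pi$ explicitly via Theorem \ref{autdecomp} (a cyclic shift of the first $j'+1$ coordinates composed with an XOR placing $k'$ in position $j'$) and verify both required properties directly, the key step being that the insertion map $Q_{n-1} \to F_{j',k'}$ is strictly order-preserving, so it carries the $a'$ smallest elements of $Q_{n-1}$ to the $a'$ smallest elements of the hyperface, which by downward-closedness of $I_a$ are exactly $I_a \cap F_{j',k'}$. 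What your approach buys is self-containedness and precision: a single concrete automorphism manifestly satisfies both conclusions simultaneously, whereas the paper's appeal to transitivity only guarantees some automorphism carrying $I_{a'}$ onto $S$ and leaves implicit why that automorphism also maps the whole hyperface $F_{0,0}$ onto $g_1^{-1}(F_{j,k})$ (for small $a'$ not every such automorphism does). The cost is the short monotonicity case analysis, which you correctly identify as the only real work; the paper's version is shorter because it black-boxes the order-isomorphism between a hyperface and $Q_{n-1}$ inside the claim $S \cong I_{a'}$.
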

\begin{proof} First, note that the image of a hyperface under automorphism is also a hyperface. Therefore, let $F_{j', k'} := g_1^{-1}(F_{j, k})$ and define $x$ to be the largest natural number in the set $S := I_a \cap F_{j', k'}$. All $a' - 1$ other elements of $S$ are less than $x$ in the binary order by definition, but furthermore these must be all of the elements in  $F_{j', k'}$ less than $x$ since $S$ is a subset of an initial segment. Therefore, $S \cong I_{a'}$ and since $a' \leq 2^{n - 1}$, $I_{a'} \subset Q^{n - 1}$. Thus, the automorphism $h$ given by Proposition \ref{transitiveaction} must send $F_{0, 0} = Q^{n - 1}$ to $g_1^{-1}(F_{j, k})$ as desired.
\end{proof}

\begin{prop}
\label{main lemma}
Let $I_a, I_b, I_{a+b} \subseteq Q_n$, and $g_1, g_2 \in Aut(Q_n)$. Suppose $2^{n-1} < a + b \leq 2^n$. If $m(g_1(I_a)) + m(g_2(I_b)) = m(I_{a+b})$, then there is a hyperface $F_{j,k}$ such that 
$$m(g_1(I_a) \cap F_{j,k}) + m(g_2(I_b) \cap F_{j,k}) = m(I_{a+b} \cap F_{j,k}).$$
\end{prop}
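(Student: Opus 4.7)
The plan is to show that the hyperface $F_{0,0}$ satisfies the desired equation. First, I will carry out a reduction: for each fixed $j$, the restricted equation on $F_{j,0}$ at any coordinate $i\neq j$ and the restricted equation on $F_{j,1}$ at the same $i$ sum to the hypothesis at $i$ (using $m_i(S) = m_i(S\cap F_{j,0}) + m_i(S\cap F_{j,1})$); at coordinate $i=j$, the $F_{j,0}$ version is trivially $0=0$, while the $F_{j,1}$ version is exactly the hypothesis at $j$. Hence $F_{j,0}$ works if and only if $F_{j,1}$ does, and it suffices to verify the equation on some single $F_{j,0}$.

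I will then pick $j=0$. Because $a+b > 2^{n-1}$, we have $F_{0,0}\subseteq I_{a+b}$, so $I_{a+b}\cap F_{0,0} = F_{0,0}$ with $m_i(F_{0,0}) = 2^{n-2}$ for $i\neq 0$. The hypothesis at coordinate $0$ gives $|g_1(I_a)\cap F_{0,0}| + |g_2(I_b)\cap F_{0,0}| = 2^{n-1} = |F_{0,0}|$, so the sizes on $F_{0,0}$ match. The task reduces to showing, for each $i\neq 0$,
\[
|g_1(I_a)\cap F_{0,0}\cap F_{i,1}| + |g_2(I_b)\cap F_{0,0}\cap F_{i,1}| = 2^{n-2}.
\]
Subtracting from the hypothesis at $i$, this identity is equivalent to
\[
|g_1(I_a)\cap F_{0,1}\cap F_{i,1}| + |g_2(I_b)\cap F_{0,1}\cap F_{i,1}| = |I_{a+b}\cap F_{0,1}\cap F_{i,1}|.
\]
By Lemma \ref{Reduce to hyperface}, the three sets here are, inside $F_{0,1}\cong Q_{n-1}$, automorphic images of the initial segments $I_u$, $I_v$, $I_{u+v}$, where $u=m_0(g_1(I_a))$, $v=m_0(g_2(I_b))$, and $u+v=(a+b)-2^{n-1}$; moreover, $I_{a+b}\cap F_{0,1}$ is exactly $I_{u+v}$.

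The hard part will be deducing this restricted identity. I expect to exploit that $u+v=(a+b)-2^{n-1}$ is the smallest non-zero entry of the $m$-table of $I_{a+b}$ (see Lemmas \ref{m-table min} and \ref{orderinglemma}); this heavily constrains where the label-$0$ entries sit in the $g_1$- and $g_2$-labeled $m$-tables of $I_a$ and $I_b$, and hence constrains $u$ and $v$ individually. Together with Proposition \ref{uniqueness prop}, which upgrades an $m$-vector equality to a set equality, these structural constraints should force the restricted $m$-vectors in $F_{0,1}$ to add up to $m(I_{u+v})$, closing the argument for $F_{0,0}$.
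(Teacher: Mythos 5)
Your reduction steps are correct and in fact coincide with observations the paper itself makes: the identity on $F_{j,0}$ holds iff it holds on $F_{j,1}$ (by subtracting from the hypothesis), $F_{0,0}\subseteq I_{a+b}$ since $a+b>2^{n-1}$, the coordinate-$0$ hypothesis gives the cardinality match on $F_{0,0}$, and Lemma \ref{Reduce to hyperface} identifies the three restricted sets with automorphic images of $I_u$, $I_v$, $I_{u+v}$ where $u=m_0(g_1(I_a))$, $v=m_0(g_2(I_b))$. But everything after that is a statement of intent, not an argument: ``I expect to exploit\dots'' and ``should force\dots'' is exactly where the entire content of the proposition lives, and it is not a routine verification. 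Knowing that $g_1(I_a)\cap F_{0,1}$, $g_2(I_b)\cap F_{0,1}$, $I_{a+b}\cap F_{0,1}$ are copies of initial segments of sizes $u$, $v$, $u+v$ says nothing yet about whether their $m$-vectors add up inside $F_{0,1}$ --- that additive identity is precisely the proposition one level down, and it can fail for arbitrary embedded copies; one needs to pin down \emph{how} these copies sit inside $F_{0,1}$, which requires genuine structural control on the labelings $g_1,g_2$.

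Concretely, the paper fills this gap with a three-way case split ($\max\{a,b\}\ge 2^{n-1}$; $\max\{a,b\}<2^{n-1}$ with one of $u,v$ zero; both nonzero) and, in the hard case, with Lemma \ref{zero matching}: it proves that the first nonzero column of each of $m(I_a)$, $m(I_b)$ is column $1$, so $u=a-2^{n-2}$ and $v=b-2^{n-2}$, that $g_1,g_2$ can be modified to assign label $0$ to the entry $(1,1)$ of each $m$-table, and that the column-$0$ entries of the two tables are paired with one of $m_j(g_1(I_a))=a$, $m_j(g_2(I_b))=0$ (or vice versa); this in turn rests on the numerical estimates of Lemma \ref{computational results} and a further sub-case analysis around the value $2^{n-2}$. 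None of this is suggested by your sketch, and the constraint you do name (that $u+v$ is the minimal nonzero entry of the $m$-table of $I_{a+b}$, via Lemma \ref{m-table min}) is only the first inequality in that chain. Separately, your plan commits to $F_{0,0}$ in all cases, whereas in the case $\max\{a,b\}\ge 2^{n-1}$ the paper switches to the hyperface $g_1(F_{0,0})$, where the identity degenerates trivially; $F_{0,0}$ does work there a posteriori, but your outline gives no direct argument for it. As it stands the proposal is a correct framing of the problem, not a proof.
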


\noindent We'll first show that Proposition \ref{main lemma} implies Theorem \ref{main theorem}. If $g_1(I_a) \cup g_2(I_b) = I_{a+b}$, then $g_1(I_a)$ and $g_2(I_b)$ are disjoint, and thus, $m(g_1(I_a)) + m(g_2(I_b)) = m(I_{a+b})$. \\

\noindent Suppose $m(g_1(I_a)) + m(g_2(I_b)) = m(I_{a+b})$, $I_a, I_b, I_{a+b} \subseteq Q_n$, and $g_1, g_2 \in Aut(Q_n)$. We'll use induction on $n$. When $n=1$, the statement is clear. Suppose that the statement is true for $I_a, I_b, I_{a+b} \subseteq Q_{n-1}$. If $a + b \leq 2^{n-1}$, and $I_a, I_b, I_{a+b} \subseteq Q_n$, then since every $g_1, g_2 \in Aut(Q_n)$ that forces $m(g_1(I_a)) + m(g_2(I_b)) = m(I_{a+b})$ may only change the values of the last $n-1$ coordinates of $m(I_a)$ and $m(I_b)$, we can treat $g_1, g_2$ as automorphisms of $Q_{n-1}$ and treat $I_a, I_b, I_{a+b}$ as subsets of $Q_{n-1}$. By inductive hypothesis, $g_1(I_a) \cup g_2(I_b) = I_{a+b}$. \\

\noindent Now suppose $2^{n-1} < a+b \leq 2^n$. By Proposition \ref{main lemma}, the conditions imply that there is a hyperface $F_{j,k}$ such that $$m(g_1(I_a) \cap F_{j,k}) + m(g_2(I_b) \cap F_{j,k}) = m(I_{a+b} \cap F_{j,k}).$$

\noindent Let $a' = |g_1(I_a) \cap F_{j,k}|$ and $b' = |g_2(I_b) \cap F_{j,k}|$. By Lemma \ref{hyperface rep}, $$|I_{a+b} \cap F_{j,k}| = |g_1(I_a) \cap F_{j,k}| + |g_2(I_b) \cap F_{j,k}| = a' + b'.$$

\noindent By Lemma \ref{Reduce to hyperface}, $g_1(I_a) \cap F_{j,k}$, $g_2(I_b) \cap F_{j,k}$, and $I_{a+b} \cap F_{j,k}$ are isomorphic to $I_{a'}$, $I_{b'}$, and $I_{a'+b'}$, respectively. In other words, there are $h_1, h_2, h_3 \in Aut(Q_n)$ such that $$m(h_1(I_{a'})) + m(h_2(I_{b'})) = m(h_3(I_{a'+b'})).$$

\noindent By Lemma \ref{left multiplication}, $$m(h_3^{-1} h_1(I_{a'})) + m(h_3^{-1} h_2(I_{b'})) = m(I_{a'+b'}).$$

\noindent Note that $a' + b' \leq |F_{j,k}| = 2^{n-1}$. Hence, regarding everything to be in $Q_{n-1}$, the inductive hypothesis implies that $$h_3^{-1} h_1(I_{a'}) \cup h_3^{-1} h_2(I_{b'}) = I_{a'+b'}.$$

\noindent So $h_1(I_{a'}) \cup h_2(I_{b'}) = h_3(I_{a'+b'})$, which gives $$(g_1(I_a) \cap F_{j,k}) \cup (g_2(I_b) \cap F_{j,k}) = I_{a+b} \cap F_{j,k}.$$

\noindent Now $m(g_1(I_a) \cap F_{j,1-k}) + m(g_2(I_b) \cap F_{j,1-k}) = m(g_1(I_a)) - m(g_1(I_a) \cap F_{j,k}) + m(g_2(I_b)) - m(g_2(I_b) \cap F_{j,k}) = m(I_{a+b}) - m(I_{a+b} \cap F_{j,k}) = m(I_{a+b} \cap F_{j, 1-k})$. By the same argument, $$(g_1(I_a) \cap F_{j,1-k}) \cup (g_2(I_b) \cap F_{j,1-k}) = I_{a+b} \cap F_{j,1-k}$$

\noindent Taking union of the above equations give $g_1(I_a) \cup g_2(I_b) = I_{a+b}$. \\

\noindent We'll use the rest of the section to prove Proposition \ref{main lemma}. Unless otherwise noted, $I_a, I_b, I_{a+b} \subseteq Q_n$, $g_1, g_2 \in Aut(Q_n)$, and $2^{n-1} < a+b \leq 2^n$. Furthermore, from now on let $a' = m_0(g_1(I_a))$ and $b' = m_0(g_2(I_b))$. Since $a+b > 2^{n-1}$, $F_{0,0} \subseteq I_{a+b}$. \\

\noindent \textbf{Case 1:} $\max \{a, b\} < 2^{n-1}$, and one of $a', b' = 0$. \\

\noindent Without loss of generality, assume $a' = 0$. Then $g_1(I_a) \cap F_{0, 1} = \emptyset$, and $g_1(I_a) \cap F_{0,0} = g_1(I_a)$. We can partition $I_{a+b}$ into $g_1(I_a)$, $F_{0,0} - g_1(I_a)$, and $I_{a+b} \cap F_{0,1}$. All of the above disjoint subsets of $I_{a+b}$ are isomorphic to initial segments, so we write $F_{0,0} - g_1(I_a) = g_1'(I_{2^{n-1}-a})$ and $I_{a+b} \cap F_{0,1} = g_3(I_{a+b-2^{n-1}})$. Since the subsets are disjoint, we have $$m(I_{a+b}) = m(g_1(I_a)) + m(g_1'(I_{2^{n-1}-a})) + m(g_3(I_{a+b-2^{n-1}}))$$

\noindent By assumption, $m(I_{a+b}) = m(g_1(I_a)) + m(g_2(I_b))$. The two equality gives
$$m(g_2(I_b)) = m(g_1'(I_{2^{N-1}-a})) + m(g_3(I_{a+b-2^{N-1}})).$$

\noindent So $$m(I_b) = m(g_2^{-1}g_1'(I_{2^{N-1}-a})) + m(g_2^{-1}g_3(I_{a+b-2^{N-1}})).$$

\noindent Note that $g_2^{-1}g_1'(I_{2^{N-1}-a})$ and $g_2^{-1}g_3(I_{a+b-2^{N-1}})$ are disjoint, and the union of the two sets contains exactly $b$ elements. Thus, by Lemma \ref{uniqueness prop}, $I_b = g_2^{-1}g_1'(I_{2^{N-1}-a}) \cup g_2^{-1}g_3(I_{a+b-2^{N-1}})$, so
$$g_2(I_b) = g_1'(I_{2^{N-1}-a}) \cup g_3(I_{a+b-2^{N-1}}).$$

\noindent In particular, $g_2(I_b) \cap F_{0,1} = g_3(I_{a+b-2^{N-1}}) = I_{a+b} \cap F_{0,1}$, so $$m(g_1(I_a) \cap F_{0,1}) + m(g_2(I_b) \cap F_{0,1}) = m(g_2(I_b) \cap F_{0,1}) = m(I_{a+b} \cap F_{0,1}),$$ which proves Proposition \ref{main lemma} for this case.
\\ \\

\noindent \textbf{Case 2:} $\max\{a,b\} < 2^{n-1}$, and $a', b' > 0$. \\

\noindent We first state some computational results that will be used in the proof of Case 2. 

\begin{lem}
\label{computational results}
Let $I_a \subseteq Q_n$.
\begin{enumerate}
\item For each $j$, $m_j(I_a) \leq m_{n-1}(I_a) = \lfloor \frac{a}{2} \rfloor \leq \frac{a}{2}$.
\item If $a > 2^{n-1}$, $m_j(I_a) \geq 2^{n-2}$ for $j \geq 1$.
\item If $a > 2^{n-1} + 2^{n-3}$, then $m_2(I_a) > 2^{n-2}$.
\item If $a < 2^{n-2} + 2^{n-3}$, then $m_i(I_a) \leq a - m_i(I_a) < 2^{n-2}$ for $i \geq 2$.
\end{enumerate}
\end{lem}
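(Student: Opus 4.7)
The plan is to prove each of the four parts by direct computation using the explicit structure of $m_j(I_a)$. Listing elements of $Q_n$ in binary order, the $j$-th coordinate is periodic with period $2^{n-j}$, consisting of $2^{n-j-1}$ zeros followed by $2^{n-j-1}$ ones; thus $m_j(I_a)$ is simply the count of ones in the first $a$ terms of this cyclic bit pattern. Combined with the ordering lemma, which monotonizes the $m_j$'s in $j$, each claim should reduce to an explicit count in a single column.

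For Part (1), the ordering lemma reduces the inequality to $j = n-1$. Column $n-1$ is the alternating pattern $0,1,0,1,\ldots$, and the number of ones in its first $a$ entries is exactly $\lfloor a/2 \rfloor \leq a/2$. For Part (2), the ordering lemma again collapses the claim to $j = 1$. Column $1$ has period $2^{n-1}$, with its first $2^{n-1}$ entries consisting of $2^{n-2}$ zeros followed by $2^{n-2}$ ones; since $a > 2^{n-1}$, all of these are counted, giving $m_1(I_a) \geq 2^{n-2}$.

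For Part (3), I would compute $m_2(I_a)$ directly. The first $2^{n-1}$ positions span exactly two full periods of column $2$, contributing $2 \cdot 2^{n-3} = 2^{n-2}$ ones. The next period begins at position $2^{n-1}$ with a block of $2^{n-3}$ zeros, followed by a block of $2^{n-3}$ ones; under the hypothesis $a > 2^{n-1} + 2^{n-3}$, at least the first index of this one-block lies in $\{0,1,\ldots,a-1\}$, so at least one additional one is counted and hence $m_2(I_a) > 2^{n-2}$.

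Part (4) splits into two inequalities. The inequality $m_i(I_a) \leq a - m_i(I_a)$ is equivalent to $m_i(I_a) \leq a/2$, which is immediate from Part (1). For the strict inequality $a - m_i(I_a) < 2^{n-2}$, the ordering lemma gives $a - m_i(I_a) \leq a - m_2(I_a)$ for all $i \geq 2$, so it suffices to control column $2$. A case split on whether $a \leq 2^{n-3}$, $2^{n-3} < a \leq 2^{n-2}$, or $2^{n-2} < a < 2^{n-2} + 2^{n-3}$ yields $m_2(I_a)$ explicitly as $0$, $a - 2^{n-3}$, and $2^{n-3}$ respectively, and in each sub-case the hypothesis $a < 2^{n-2} + 2^{n-3}$ forces $a - m_2(I_a) < 2^{n-2}$. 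I do not foresee any genuine obstacle beyond careful bookkeeping of these boundary cases.
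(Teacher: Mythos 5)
Your proposal is correct and follows essentially the same route as the paper: both arguments read off $m_j(I_a)$ from the periodic block structure of column $j$ (a period of $2^{n-j-1}$ zeros followed by $2^{n-j-1}$ ones) and invoke the ordering lemma to reduce to a single column. Your treatment is if anything slightly more explicit than the paper's (e.g.\ the three-way case split on $a$ in part (4), where the paper just cites $m_i(I_a)\geq 2^{n-3}$ for $a\geq 2^{n-2}$), but the underlying computations are identical.
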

\begin{proof}

To (1), it follows from Lemma \ref{orderinglemma} and the fact that $m_{n - 1}(I_a)$ counts the number of $1$s in the rightmost place from $0$ to $a - 1$, i.e. the number of natural numbers congruent to $1 (mod 2)$ in this range. To (2), recall that for each $j \geq 1$ $m_j(I_a)$ counts the number of $1$s in the $j$th position. Thus, for each block of $2^{n - j}$ elements in $I_a$, exactly $2^{n - j - 1}$ of them will have a $1$ in the $j$th position, from which the statement follows. To (3), $a > 2^{n - 1} + 2^{n - 3}$, the numbers in the range $2^{n - 1}$ to $2^{n - 1} + 2^{n - 3} - 1$ all have no $1$ in the second position. Starting at $2^{n - 1} + 2^{n - 3}$, there will be a $1$ in the second position, from which the statement follows. To (4), the first inequality follows from (1) above. The second follows from the fact that $m_i(I_a) \geq 2^{n - 3}$ for $a \geq 2^{n - 2}$, $i \geq 2$ by a similar argument to statement (2) above.
\end{proof}

\begin{lem} 
\label{zero matching}
Let $I_a, I_b, I_{a+b} \subseteq Q_n$, and $g_1, g_2 \in Aut(Q_n)$. Let $2^{n-1} < a + b \leq 2^n$, $\max\{a,b\} < 2^{n-1}$, and $a', b' > 0$. Suppose $m(g_1(I_a)) + m(g_2(I_b)) = m(I_{a+b})$. Then
\begin{enumerate}
\item Let $\alpha, \beta$ be the smallest integer such that $m_{\alpha}(I_a), m_{\beta}(I_b) \neq 0$, respectively. Then $\alpha = \beta = 1$, and $a' = a - 2^{n-2}$, $b' = b - 2^{n-2}$. Thus, $a, b > 2^{n-2}$.
\item  There exist $g_1',g_2' \in Aut(Q_n)$ such that $m(g_1'(I_a)) = m(g_1(I_a))$, $m(g_2'(I_b)) = m(g_2(I_b))$, and the entry $(1,1)$ in the m-tables for $I_a$ (and respectively, $I_b$) is labeled $0$ by $g_1$ (and respectively, $g_2$).
\item Suppose $g_1$ and $g_2$ satisfies the conditions in (2). Then the labeled entry in column 0 of the m-table of $I_a$ is paired with the labeled entry in column 0 of the m-table of $I_b$. Furthermore, if the label is $j$, then either $m_j(g_1(I_a)) = a$ and $m_j(g_2(I_b)) = 0$ or $m_j(g_1(I_a)) = 0$ and $m_j(g_2(I_b)) = b$.
\end{enumerate}
\end{lem}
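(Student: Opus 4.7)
My plan breaks into three parts matching (1)--(3), with the bulk of the work in part (3).

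\textbf{Part (1).} Taking the zeroth coordinate of the hypothesis gives $a' + b' = m_0(I_{a+b}) = a+b-2^{n-1}$, where the second equality uses $2^{n-1} < a+b \leq 2^n$. Since $a', b' > 0$, Lemma \ref{m-table min} gives $a' \geq a - 2^{n-1-\alpha}$ and $b' \geq b - 2^{n-1-\beta}$. Substituting and rearranging yields $2^{-\alpha} + 2^{-\beta} \geq 1$. Because $a, b < 2^{n-1}$ forces $m_0(I_a) = m_0(I_b) = 0$, so $\alpha, \beta \geq 1$, the only possibility is $\alpha = \beta = 1$ with equality throughout. This pins down $a' = a - 2^{n-2}$ and $b' = b - 2^{n-2}$; positivity then forces $a, b > 2^{n-2}$.

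\textbf{Part (2).} The entry labeled $0$ by $g_1$ has value $a' = a - 2^{n-2} = m_1(I_a)$. Since $m_j(I_a) \leq a/2 < 2^{n-2}$ for every $j$ by Lemma \ref{computational results}(1), every row-$1$ entry $a - m_j$ strictly exceeds $a - 2^{n-2}$, so label $0$ must sit in row $0$ at a column $c$ where $m_c(I_a) = m_1(I_a)$. When $c \neq 1$, columns $1$ and $c$ are entrywise identical in both rows, so post-composing $\sigma_1$ with the transposition $(1\;c)$ produces a new $g_1'$ with unchanged m-vector but with label $0$ moved to the target entry in column $1$. The identical construction yields $g_2'$.

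\textbf{Part (3).} Write $j_1 = \sigma_1(0)$ and $j_2 = \sigma_2(0)$; by part (2), both lie in $\{1, \ldots, n-1\}$ and each labels column $0$ of its respective m-table, so $m_{j_1}(g_1'(I_a)) \in \{0, a\}$ and $m_{j_2}(g_2'(I_b)) \in \{0, b\}$. I track label $j_1$ in the m-table of $I_b$ via the sum constraint at coordinate $j_1$. If $m_{j_1}(g_1'(I_a)) = a$, then $m_{j_1}(g_2'(I_b)) \leq (a+b)/2 - a = (b-a)/2$; since every nonzero row-$0$ entry of $I_b$'s m-table is at least $b - 2^{n-2} > (b-a)/2$ (rewriting as $a+b > 2^{n-1}$) and every row-$1$ entry is at least $b/2 > (b-a)/2$, the only feasible placement is the $(0,0)$ entry of value $0$, forcing $j_1 = j_2$ and the first listed configuration. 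If instead $m_{j_1}(g_1'(I_a)) = 0$, then $m_{j_1}(g_2'(I_b)) \geq 2^{n-2}$ by Lemma \ref{computational results}(2), which restricts label $j_1$'s placement to either the $(1,0)$ entry (value $b$, the second configuration) or an exceptional $(1, c)$ with $c \geq 2$ and $m_c(I_b) = b - 2^{n-2}$ (value $2^{n-2}$).

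The main obstacle is eliminating this exceptional placement. The condition $m_c(I_b) = b - 2^{n-2}$ with $c \geq 2$ forces $m_2(I_b) = m_1(I_b)$, which by a direct column-$2$ pattern check requires $b > 3 \cdot 2^{n-3}$. Assuming for contradiction that $j_1 \neq j_2$, the symmetric analysis applied to $j_2$ rules out case A (it would yield $j_2 = j_1$) and similarly rules out the $(1,0)$ placement, leaving only the exceptional placement for $j_2$ in $I_a$'s m-table, which gives $a > 3 \cdot 2^{n-3}$. Then $a + b > 2^{n-1} + 2^{n-2} > 2^{n-1} + 2^{n-3}$, so Lemma \ref{computational results}(3) yields $m_2(I_{a+b}) > 2^{n-2}$, forcing $m_{j_i}(I_{a+b}) > 2^{n-2}$ whenever $j_i \geq 2$ and contradicting the exceptional value $2^{n-2}$. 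The only surviving possibility is $j_1 = j_2 = 1$, which contradicts $j_1 \neq j_2$; hence the exceptional case cannot occur, and (3) follows.
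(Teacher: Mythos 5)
Your proof is correct and follows essentially the same route as the paper's: part (1) via Lemma \ref{m-table min} and the identity $a'+b'=a+b-2^{n-1}$, part (2) via the column-swap between two identical columns, and part (3) via the same case analysis on where the column-$0$ label of each table lands in the other table, with the final contradiction (an exceptional value $2^{n-2}$ in a column $\geq 2$ forces $a,b\geq 2^{n-2}+2^{n-3}$, hence $m_2(I_{a+b})>2^{n-2}$, hence $j_1=j_2=1$) merely assembled in the reverse order from the paper's, which instead first bounds $a+b\leq 2^{n-1}+2^{n-3}$ and then contradicts it. Two cosmetic remarks: the threshold in your column-$2$ pattern check should be $b\geq 3\cdot 2^{n-3}$ rather than $b>3\cdot 2^{n-3}$ (harmless, since $a+b\geq 2^{n-1}+2^{n-2}>2^{n-1}+2^{n-3}$ still holds), and your placement of label $0$ at the row-$0$, column-$1$ entry of value $a-2^{n-2}$ is the correct reading of the statement's ``$(1,1)$'', which is consistent with how the paper actually uses part (2) afterward.
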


\begin{proof} To prove (1), first note that by Lemma \ref{m-table min}, the smallest non-zero number in the m-tables of $I_a$ and $I_b$ are $a - 2^{n-1-\alpha}$ and $b - 2^{n-1-\beta}$, respectively. Hence, $$a'+b' \geq a + b - 2^{n-1-\alpha} - 2^{n-1-\beta}$$

\noindent But $a' + b' = m_0(I_{a+b}) = a + b - 2^{n-1}$, so $$2^{n-1-\alpha} + 2^{n-1-\beta} \geq 2^{n-1}$$

\noindent Since $\max\{a,b\} < 2^{n-1}$, $\alpha, \beta > 0$. Thus, the inequality holds if and only if $\alpha = \beta = 1$. This proves that $a' = a - 2^{n-2}$, $b' = b - 2^{n-2}$ and finishes the proof of (1). \\

\noindent For (2), it suffices to show that $g_1$ can be replaced by a $g_1'$ with the desired property. Suppose the label $k \neq 0$ is assigned to the entry $(i_1, 1)$ and that the label $0$ is assigned to the entry $(i_2, j_2) \neq (1, 1)$ in the labeled m-table of $I_a$ under $g_1$. Now we relabel the m-table of $I_a$ such that the label $0$ is assigned to (1, 1) and the label $k$ is assigned to $(i_1, j_2)$. This new label is also proper, so there is an corresponding automorphism $g_1'$. For each $k' \not\in \{0, k\}$, $m_{k'}(g_1'(I_a)) = m_{k'}(g_1(I_a))$ since the position of the label $k'$ does not change. Moreover, $m_0(g_1'(I_a)) = m_1(I_a)$, and since $2^{n-2} < a < 2^{n-1}$, we have $$m_0(g_1'(I_a)) = m_1(I_a) = a - 2^{n-2} = m_0(g_1(I_a))$$

\noindent by part (1). Lastly, in our new labeling, $m_k(g_1'(I_a))$ is the entry $(i_1, j_2)$, and $m_k(g_1(I_a))$ is the entry $(i_1, 1)$ in the m-table of $I_a$. These entries are equal, since $m_{j_2}(I_a) = m_0(g_1(I_a)) = m_1(I_a)$, which implies that the column $1$ and column $j_2$ are identical. Hence, $m_k(g_1(I_a)) = m_k(g_1'(I_a))$, and so $m(g_1(I_a)) = m(g_1'(I_a))$. Using the same argument on $I_b$ and $g_2$ gives the automorphism $g_2'$. \\

\noindent For (3), suppose that the label in column $0$ of the $g_1$-labeled (respectively, $g_2$-labeled) m-table for $I_a$ (respectively, $I_b)$ is $j_1$ (respectively, $j_2$). We first note  that $m_{j_1}(g_2(I_b)) = 0$ implies that $j_1 = j_2$. This is because only column $0$ in the m-table of $I_a$ contains the value $0$, so $g_2$ can only assign label $j_1$ to column $0$, which by definition implies that $j_1 = j_2$. In the same way, if $m_{j_2}(g_1(I_a)) = 0$, then the conclusion holds. \\

\noindent Moreover, $j_1 = j_2$ implies that exactly one of $m_{j_1}(g_1(I_a)) = a$ or $m_{j_2}(g_2(I_b)) = b$ is true. To see this, observe that if neither is true, then $m_{j_1}(g_1(I_a)) + m_{j_1}(g_2(I_b)) = m_{j_1}(I_{a+b}) = 0$. We know that $j_1 \neq 0$ by part (2), but for such $j_1$, the condition that $a+b > 2^{N-1}$ guarantees that $m_{j_1}(I_{a+b}) > 0$, which is a contradiction. If both are true, then $m_{j_1}(I_{a+b}) = a + b$ which is strictly greater than $(a+b)/2$. But by Lemma \ref{computational results}(1), $m_{j_1}(I_{a+b}) \leq m_{n-1}(I_{a+b}) \leq (a+b)/2$, which is a contradiction. Thus, it suffices to prove that $m_{j_1}(g_2(I_b)) = 0.$\\

\noindent First, consider the case where at least one of $m_{j_1}(g_1(I_a))$ and $m_{j_2}(g_2(I_b))$ is non-zero. Without loss of generality, suppose the former is non-zero, so $m_{j_1}(g_1(I_a)) = a$. Suppose for the sake of contradiction that $m_{j_1}(g_2(I_b)) \neq 0$. Then, 
$$m_{j_1}(I_{a+b}) \geq a + b' = a + b - 2^{n-2}$$
But $m_{j_1}(I_{a+b}) \leq (a+b)/2$, so
$$a + b - 2^{n-2} \leq \frac{a+b}{2}$$
which implies that $a+b \leq 2^{n-1}$, contradicting our hypothesis. \\

\noindent We're then left to prove the case when both $m_{j_1}(g_1(I_a)) = 0$ and $m_{j_2}(g_2(I_b)) = 0$. Since the label $0$ is already used, $j_1, j_2 \geq 1$. By Lemma \ref{computational results}(2), $m_j(I_{a+b}) \geq 2^{n-2}$ for each $j \geq 1$. It follows that 
$$m_{j_1}(g_2(I_b)) \geq 2^{n-2} \text{ and } m_{j_2}(g_1(I_a)) \geq 2^{n-2}.$$
If $m_{j_1}(g_2(I_b)) > 2^{n-2}$, then $m_{j_1}(g_2(I_b)) = b$, which by definition implies that $j_1 = j_2$. Similarly, $m_{j_2}(g_1(I_a)) > 2^{n-2}$ also implies that $j_1 = j_2$. Hence, we'll assume for the rest of the proof that 
$$m_{j_1}(g_2(I_b)) \leq 2^{n-2} \text{ and } m_{j_2}(g_1(I_a)) \leq 2^{n-2}.$$
Combining the inequalities, we see that 
$$m_{j_1}(g_2(I_b)) = m_{j_2}(g_1(I_a)) = m_{j_1}(I_{a+b}) = m_{j_2}(I_{a+b}) = 2^{n-2}.$$
By Lemma \ref{computational results}(3), when $a + b > 2^{n-1} + 2^{n-3}$, $m_2(I_{a+b}) > 2^{n-2}$. In this case, only $m_0(I_{a+b})$ and $m_1(I_{a+b})$ could possibly be $2^{n-2}$. But since $j_1$ and $j_2$ are distinct and non-zero, this could never happen. Hence, $a + b \leq 2^{n-1} + 2^{n-3} < 2^{n-1} + 2^{n-2}$. \\

\noindent Suppose $g_1$ assigns label $j_2$ to column $i_1$, and $g_2$ assigns label $j_1$ to column $i_2$. Since both column $0$ and column $1$ already have other labels in our current assumptions, $i_1, i_2 \geq 2$. This means that $m_{i_1}(I_a)$ or $a - m_{i_1}(I_a)$ is equal to $2^{n-2}$, and $m_{i_2}(I_b)$ or $b - m_{i_2}(I_b)$ is equal to $2^{n-2}$. By Lemma \ref{computational results}(4), this takes place only when $a, b \geq 2^{n-2} + 2^{n-3}$. But then $a+b \geq 2^{n-1} + 2^{n-2}$, which contradicts the above bound. This concludes the proof.
\end{proof}

\noindent Now we prove the proposition for case 2. By Lemma \ref{zero matching}(2), we can assume without loss of generality that the entries $(1,1)$ in the m-table of $I_a$ (and $I_b$, respectively) is labeled $0$ by $g_1$ (and $g_2$, respectively). This means that $g_1^{-1}(F_{0, 1}) = g_2^{-1}(F_{0,1}) = F_{1,1}$. Equivalently, $g_1^{-1}(F_{0,0}) = g_2^{-1}(F_{0,0}) = F_{1,0}$. We'll show that $$m(g_1(I_a \cap F_{1,0})) + m(g_2(I_b \cap F_{1,0})) = m(I_{a+b} \cap F_{0,0}),$$
\noindent which would imply that $m(g_1(I_a) \cap F_{0,0}) + m(g_2(I_b) \cap F_{0,0}) = m(I_{a+b} \cap F_{0,0})$. \\

\noindent By Lemma \ref{zero matching}(3), if an entry in the column $0$ of $I_a$ is labeled $j$ by $g_1$, then an entry in the column $0$ of $I_b$ is also labeled $j$ by $g_2$. Furthermore, exactly one of the two entries lie in row $0$. Without loss of generality, assume that the entry in $(0, 0)$ in $I_a$ and the entry $(1, 0)$ in $I_b$ are labeled $j$ by $g_1, g_2$, respectively. Then the labeled m-tables become:
$$\left( \begin{array}{ccccc} 0^{[j]} & a - 2^{n-2^{[0]}} & * & \ldots & * \\
a & 2^{n-2} & * & \ldots & * \end{array} \right) \text{ and } \left( \begin{array}{ccccc} 0 & b - 2^{n-2^{[0]}} & * & \ldots & * \\
b^{[j]} & 2^{n-2} & * & \ldots & * \end{array} \right)$$

Note that $I_a \cap F_{1,0}$ and $I_b \cap F_{1,0}$ are precisely the initial segment of length $2^{n-2}$. Hence, the labeled m-table of $I_a \cap F_{1,0}$ by $g_1$ and $I_b \cap F_{1,0}$ by $g_2$ are:
$$\left( \begin{array}{ccccc} 0^{[j]} & 0^{[0]} & 2^{n-3} & \ldots & 2^{n-3} \\
2^{n-2} & 2^{n-2} & 2^{n-3} & \ldots & 2^{n-3} \end{array} \right) \text{ and } \left( \begin{array}{ccccc} 0 & 0^{[0]} & 2^{n-3} & \ldots & 2^{n-3} \\
2^{n-2^{[j]}} & 2^{n-2} & 2^{n-3} & \ldots & 2^{n-3} \end{array} \right)$$

\noindent Thus, $m(g_1(I_a) \cap F_{0,0}) + m(g_2(I_b) \cap F_{0,0}) = (0, 2^{n-2}, 2^{n-2}, \ldots, 2^{n-2}) = m(I_{a+b} \cap F_{0,0})$. This finishes the proof of case 2. \\ \\

\noindent \textbf{Case 3: } $\max \{a,b\} \geq 2^{n-1}$. \\

\noindent Without loss of generality, let $a \geq b$. Then $a \geq 2^{n-1}$, so if $F_{j,k} = g_1(F_{0,0})$, then 
$$|g_1(I_a) \cap F_{j,k}| = |g_1(I_a \cap F_{0,0})| = 2^{n-1}$$
By Lemma \ref{hyperface rep}, $m(g_1(I_a)) + m(g_2(I_b)) = m(I_{a+b})$ implies that 
$$|g_1(I_a) \cap F_{j,k}| + |g_2(I_b) \cap F_{j,k}| = |I_{a+b}\ \cap F_{j,k}|$$
But $|I_{a+b}\ \cap F_{j,k}| \leq |F_{j,k}| = 2^{n-1}$, which forces $|g_2(I_b) \cap F_{j,k}| = 0$ and $|I_{a+b}\ \cap F_{j,k}| = 2^{n-1}$. Hence, 
$$g_1(I_a) \cap F_{j,k} = F_{j,k} \text{, } g_2(I_b) \cap F_{j,k} = \emptyset \text{, and } I_{a+b} \cap F_{j,k} = F_{j,k}$$
which clearly implies the conclusion. This finishes the proof of Proposition \ref{main lemma}, and hence, the proof of Theorem \ref{main theorem}. \\

\section{Classification of Unfit Pairs}
\noindent The main theorem allows us to efficiently compute unfit pairs for small $n$. For example, the set of unfit pairs $(a, b)$, where $a, b \leq 2^6$, are shown in Figure \ref{counterexamples}, where the cell in the $a$th row and $b$th column is colored black if $(a, b)$ is an unfit pair. In this section and the next section, we'll fully classify the unfit pairs. Unless otherwise noted, every initial segment and other subsets of the hypercube mentioned in section 4 and 5 is contained in $Q_{n+1}$.

\begin{figure} \label{counterexamples}
\includegraphics[width=15cm]{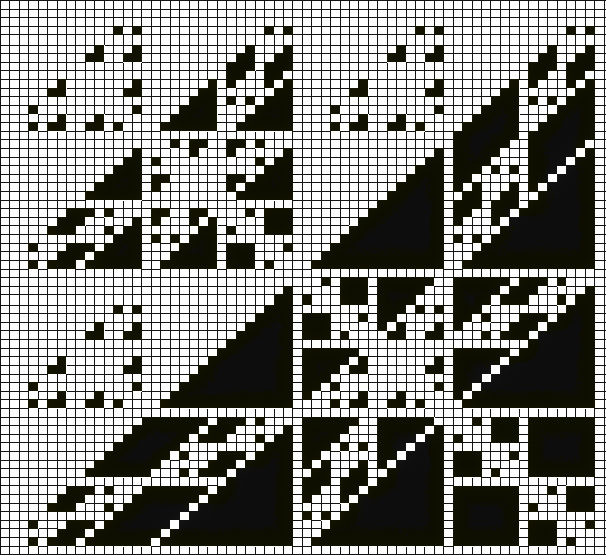}
\caption{Unfit pairs for $a, b \leq 2^6$}
\end{figure}

\subsection{Translation of the Upper Triangle}
\begin{prop}
\label{Translation of the Upper Triangle}
Suppose $a + b \leq 2^n$. Then $(a, b)$ fits if and only if $(a + 2^n, b)$ fits if and only if $(a, b + 2^n)$ fits.
\end{prop}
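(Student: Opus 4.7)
Since Observation \ref{fitobs} gives that $(a,b)$ is fit if and only if $(b,a)$ is fit, the second biconditional in the statement follows from the first by swapping the roles of $a$ and $b$. I therefore focus on proving that $(a,b)$ fits if and only if $(a+2^n, b)$ fits.

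My plan is to use the partition interpretation of Observation \ref{3initpartition}: fit is equivalent to the existence of an appropriate partition of $Q_N$, where I allow one piece to be empty in the degenerate case $a+b = 2^n$. Setting $c := 2^n - a - b \geq 0$, I claim that $Q_n$ admits a partition into induced subgraphs isomorphic to $I_a$, $I_b$, $I_c$ if and only if $Q_{n+1}$ admits one into induced subgraphs isomorphic to $I_{a+2^n}$, $I_b$, $I_c$. Because fit is independent of the ambient hypercube dimension (upward: extend an automorphism of $Q_n$ to $Q_{n+1}$ by acting as the identity on the $0$-th coordinate; downward: when $a + b \leq 2^n$ the entire set $I_{a+b}$ and both images $g_1(I_a), g_2(I_b)$ lie inside the face $F_{0,0}$, so Proposition \ref{transitiveaction} produces automorphisms of $Q_n$ implementing the partition), this partition equivalence yields the biconditional.

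For the forward direction, I identify $Q_n$ with $F_{0,0} \subseteq Q_{n+1}$ and set $A' := A \cup F_{0,1}$, $B' := B$, $C' := C$. Applying the $0$-th bit flip $\phi$ sends $A'$ to $F_{0,0} \sqcup \phi(A)$, where $\phi(A) \subseteq F_{0,1}$ is a copy of $I_a$. Proposition \ref{transitiveaction} applied inside $F_{0,1} \cong Q_n$ then produces an automorphism sending the natural initial segment of length $a$ in $F_{0,1}$ to $\phi(A)$; extending this by the identity on $F_{0,0}$ witnesses $\phi(A') \cong I_{a+2^n}$, hence $A' \cong I_{a+2^n}$.

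For the backward direction, suppose $Q_{n+1} = A' \sqcup B' \sqcup C'$ with $A' \cong I_{a+2^n}$. Since $I_{a+2^n}$ decomposes as $F_{0,0} \sqcup I_a'$ and hence contains the hyperface $F_{0,0}$ as a subgraph, the set $A'$ must contain some hyperface $F$. Its complementary hyperface $F^c$ is disjoint from $F \subseteq A'$ and therefore contains $B' \sqcup C'$. Lemma \ref{Reduce to hyperface} applied to the pair $(I_{a+2^n}, F^c)$ gives that $A' \cap F^c$ has size $a$ and is isomorphic to $I_a$. Thus $F^c \cong Q_n$ receives the partition $(A' \cap F^c) \sqcup B' \sqcup C'$ of shape $(a, b, c)$, as desired. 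The main obstacle is precisely this backward step: because $A'$ can be an arbitrary automorphic image of $I_{a+2^n}$, the hyperface it contains need not be $F_{0,0}$, and it is Lemma \ref{Reduce to hyperface} that guarantees $A' \cap F^c$ still has the shape of an initial segment.
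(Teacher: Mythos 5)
Your overall route is the same as the paper's: for the forward direction you enlarge the piece of size $a$ by a full hyperface, and for the backward direction you observe that any automorphic image of $I_{a+2^n}$ contains a hyperface $F$, restrict everything to the complementary hyperface $F^c \cong Q_n$, and use Lemma \ref{Reduce to hyperface} to see that the leftover piece $A' \cap F^c$ is a copy of $I_a$. The backward direction is sound, and your explicit remark that fitness is independent of the ambient dimension makes precise a point the paper leaves implicit.

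There is, however, one step in your forward direction that fails as written. You take an automorphism $h$ of $F_{0,1} \cong Q_n$ carrying the natural initial segment of length $a$ in $F_{0,1}$ onto $\phi(A)$, and then ``extend $h$ by the identity on $F_{0,0}$.'' The resulting bijection of $Q_{n+1}$ is not a graph automorphism, and its restriction to $I_{a+2^n}$ is not even an isomorphism of induced subgraphs: $Q_{n+1}$ contains the perfect matching joining each $x \in F_{0,1}$ to $\phi(x) \in F_{0,0}$, and your map sends the cross edge $\{x, \phi(x)\}$ (for $x$ in the natural copy of $I_a$) to the pair $\{h(x), \phi(x)\}$, which is an edge only when $h(x) = x$. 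So this map does not witness $\phi(A') \cong I_{a+2^n}$. The conclusion is still true and the fix is local: identifying $F_{0,1}$ with $Q_n$, Proposition \ref{transitiveaction} gives $h \in \Aut(Q_n)$ with $h(I_a) = \phi(A)$; extend $h$ to $\tilde{h} \in \Aut(Q_{n+1})$ by letting it act trivially on the $0$-th coordinate and as $h$ on the remaining $n$ coordinates, i.e.\ the \emph{same} way on both hyperfaces. Then $\tilde{h}$ is a genuine automorphism, fixes $F_{0,0}$ setwise, and satisfies $\tilde{h}(I_{a+2^n}) = F_{0,0} \cup \phi(A) = \phi(A')$ --- which is exactly the construction the paper writes as $g_1(I_{2^n+a}) \cup (10\cdots 0 \oplus g_2(I_b)) = I_{2^n+a+b}$.
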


\begin{proof}
If $(a,b)$ fits, where $g_1(I_a) \cup g_2(I_b) = I_{a+b}$, then $g_1$ and $g_2$ affect only the subset $Q_n = F_{0,0}$ in $Q_{n+1}$. Then clearly, $g_1(I_{2^n+a}) \cup 10\cdots 0 \oplus g_2(I_b) = I_{2^n+a+b}$. \\

\noindent Suppose now $(a+2^n, b)$ fits, where $g_1(I_{a+2^n}) \cup g_2(I_b) = I_{2^n+a+b}$. Let $g_1(F_{0,0}) = F_{j,k}$. Then  $|g_1(I_{a+2^n}) \cap F_{j,1-k}| = a$, and $g_2(I_b) \subseteq F_{j,1-k}$. Since $g_1(I_{a+2^n}) \cap F_{j,1-k}, g_2(I_b), I_{2^n+a+b} \cap F_{j,1-k}$ are isomorphic to the initial segments of lengths $a, b, a+b$, respectively, $(a,b)$ fits.
\end{proof}

\subsection{Reflection of the Lower Triangle}
\begin{prop}
\label{Reflection of the Lower Triangle}
Suppose $a, b \leq 2^{n-1}$, and $2^{n-1} \leq a + b \leq 2^n$. Then $(a, b)$ fits if and only if $(2^n - a, 2^n - b)$ fits. Thus, the number of unfit pairs in $\{(a,b): a, b \leq 2^{n-1}, 2^{n-1} \leq a+b \leq 2^n\}$ is equal to the number of unfit pairs in $\{(a,b): a, b \geq 2^{n-1}, a+b \leq 2^n+2^{n-1}\}.$
\end{prop}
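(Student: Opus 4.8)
The plan is to pass to a single hypercube and translate everything into the additive language of Theorem~\ref{main theorem}. Since $a+b\le 2^n$ and $(2^n-a)+(2^n-b)=2^{n+1}-a-b\le 2^{n+1}$, both pairs can be tested inside $Q_{n+1}$; using Theorem~\ref{main theorem} together with the reduction and padding observations of Section~2 (which show that solvability in one cube is equivalent to solvability in every larger cube once the sum is admissible), it suffices to decide each pair by the solvability of an $m$-vector equation in $Q_{n+1}$. Because $(a,b)\mapsto(2^n-a,2^n-b)$ is an involution carrying the hypotheses to themselves (if $a,b\le 2^{n-1}$ and $2^{n-1}\le a+b\le 2^n$, then $2^n-a,2^n-b\ge 2^{n-1}$ and $2^n\le(2^n-a)+(2^n-b)\le 2^n+2^{n-1}$), it is enough to prove one implication. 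The final counting statement is then immediate: this involution is a cardinality-preserving bijection between $\{(a,b):a,b\le 2^{n-1},\,2^{n-1}\le a+b\le 2^n\}$ and $\{(a,b):a,b\ge 2^{n-1},\,a+b\le 2^n+2^{n-1}\}$ which preserves fitness, hence also unfitness.

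The engine is a reflection identity for $m$-vectors. Writing $\phi$ for the bit-complement automorphism $y\mapsto(1\cdots1)\oplus y$ of $Q_n$, one has $I_{2^n-k}=\phi(Q_n\setminus I_k)$, so Proposition~\ref{mvectoraction} gives $m(I_{2^n-k})=m(I_k)+(2^{n-1}-k)\mathbf 1$ in $Q_n$ for every $0\le k\le 2^n$, where $\mathbf 1=(1,\dots,1)$. Lifting this to $Q_{n+1}$ through the splitting $Q_{n+1}=F_{0,0}\sqcup F_{0,1}$ (Definition~\ref{hyperface}), and setting $t_a=2^{n-1}-a$, $t_b=2^{n-1}-b$, $s=2^{n+1}-a-b$, I would record the $m$-tables of $I_a,I_b,I_{a+b}$ and of $I_{2^n-a},I_{2^n-b},I_s$. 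The crucial outputs are: for columns $1,\dots,n$ the $m$-table of $I_{2^n-a}$ (resp.\ $I_{2^n-b}$) is that of $I_a$ (resp.\ $I_b$) shifted by $t_a$ (resp.\ $t_b$) in both rows; the target shifts uniformly, $m(I_s)=m(I_{a+b})+(2^n-a-b)\mathbf 1$ with $2^n-a-b=t_a+t_b$; but column $0$ is exceptional, since $I_a$ and $I_{2^n-a}$ both lie in $F_{0,0}$ and so have $m_0=0$, whereas $I_s$ spills into $F_{0,1}$ and has $m_0=2^n-a-b\neq 0$.

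By the labeling/automorphism correspondence of Section~2, fitness of $(a,b)$ in $Q_{n+1}$ is equivalent to choosing one entry per column in each $m$-table (a transversal) and a bijection between the two tables' columns so that the resulting pairwise sums form the multiset of $m(I_{a+b})$; the same holds for $(2^n-a,2^n-b)$ with target multiset $m(I_s)$. Given a solution for $(a,b)$, I would shift every chosen entry in columns $1,\dots,n$ by $t_a$ or $t_b$: by the previous paragraph these are legal transversal entries of $I_{2^n-a},I_{2^n-b}$, and each such pairwise sum increases by exactly $t_a+t_b=2^n-a-b$, matching the shifted part of $m(I_s)$. What remains is to supply legal column-$0$ entries (from $\{0,2^n-a\}$ and $\{0,2^n-b\}$) accounting for the single extra target value $2^n-a-b$, and to re-thread the bijection through column $0$.

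This last step is the main obstacle, and it is genuine rather than cosmetic: in general no choice $\alpha_0'\in\{0,2^n-a\}$, $\beta_0'\in\{0,2^n-b\}$ satisfies $\alpha_0'+\beta_0'=2^n-a-b$, so one cannot simply ``shift and append''—the exceptional column must be mixed with the shifted columns, reshuffling which sums are assigned to which coordinates. The reason this cannot be avoided is that the reflected pair sits in the hard regime $s>2^{(n+1)-1}$ with $\max\{2^n-a,2^n-b\}<2^{(n+1)-1}$, which is exactly the situation of Lemma~\ref{zero matching}. I would therefore finish by invoking that lemma: applied in $Q_{n+1}$ its part~(1) forces the smallest nonzero indices to be $1$ and pins down $m_0(h_1(I_{2^n-a}))=2^{n-1}-a$ and $m_0(h_2(I_{2^n-b}))=2^{n-1}-b$ (which indeed sum to $m_0(I_s)=2^n-a-b$), while its part~(3) dictates exactly how the two column-$0$ entries pair and which is $0$ and which is full. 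Feeding these forced values back through the shift identity collapses the column-$0$ bookkeeping to a condition on columns $1,\dots,n$ that is precisely the transversal criterion for $(a,b)$, closing the loop; the degenerate case $a'=0$ or $b'=0$ (equivalently $a=2^{n-1}$ or $b=2^{n-1}$) is handled separately and more easily, exactly as Case~1 in the proof of Proposition~\ref{main lemma}.
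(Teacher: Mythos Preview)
Your overall strategy—lift to $Q_{n+1}$, use the shift identity $m(I_{2^n-k})=m(I_k)+(2^{n-1}-k)\mathbf 1$ on columns $1,\dots,n$, and then repair column $0$—is the right skeleton, and the shift step matches exactly what the paper does (its Lemma~\ref{Shifting m-table}). But two points keep this from being a complete proof.

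First, the involution does not reduce the problem to a single implication. The map $(a,b)\mapsto(2^n-a,2^n-b)$ carries the \emph{small} regime $a,b\le 2^{n-1}$ to the \emph{large} regime $a,b\ge 2^{n-1}$; it does not send the hypothesis set to itself. So ``small fits $\Rightarrow$ large fits'' does not yield ``large fits $\Rightarrow$ small fits'' by symmetry; the paper in fact proves the two implications by separate (though parallel) arguments.

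Second, and more seriously, your use of Lemma~\ref{zero matching} is pointed at the wrong side. You invoke it on the \emph{target} pair $(2^n-a,2^n-b)$: that tells you what any solution \emph{must} look like, but it does not help you build one from the $(a,b)$ solution you have in hand. Concretely, your shifted partial labeling covers columns $1,\dots,n$, and to place column $0$ you need some index $j$ with $m_j(I_s)\in\{2^n-a,2^n-b\}$ (equivalently $m_j(I_{a+b})\in\{a,b\}$). Lemma~\ref{zero matching}(3) on the target says such a $j$ exists \emph{if} $(2^n-a,2^n-b)$ fits—circular for the forward direction. The paper resolves this by applying Lemma~\ref{zero matching} to the \emph{source} solution for $(a,b)$ in $Q_n$: part (3) then produces a hyperface $F_{j,\cdot}$ with $g_1(I_a)\subseteq F_{j,1}$ and $g_2(I_b)\subseteq F_{j,0}$. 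From there the argument is geometric rather than $m$-table bookkeeping: $g_1(I_a)\cup F_{j,0}\cong I_{2^n+a}$ and $F_{j,0}\setminus g_2(I_b)\cong I_{2^n-b}$, so $I_{2^n+a}$ decomposes as $I_{2^n-b}\cup I_{a+b}$, whence $(2^n-b,\,a+b)$ fits; the $6$-fold symmetry of Observation~\ref{fitobs} (with $(2^n-a)+(2^n-b)+(a+b)=2^{n+1}$) then gives $(2^n-a,2^n-b)$. Your ``closing the loop'' sentence asserts exactly the reduction that this geometric step accomplishes, but does not carry it out; in particular, the reshuffling you acknowledge is needed has not been executed, and it cannot be executed without first extracting the separating hyperface on the source side.
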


\begin{proof}
We first show that if there are $g_1, g_2 \in \Aut(Q_n)$ such that $g_1(I_a) \cup g_2(I_b) = I_{a+b}$, and $a, b \leq 2^{n-1}$, $2^{n-1} \leq a+b \leq 2^n$, then $(2^n-a, 2^n-b)$ fits. Let $a' = m_1(g_1(I_a))$ (which is the first non-zero entry in $m(I_{a+b})$ given the bounds of $a, b$), and $b' = m_1(g_2(I_b))$. \\

\noindent Suppose $a' = 0$ or $b' = 0$. Without loss of generality, suppose $a' = 0$. We'll need the following lemma. 

\begin{lem}
\label{Shifting m-table} 
As usual, assume that all relevant initial segments lie in $Q_{n+1}$. Let $k \in \Z$, $a \leq 2^k$. Then the last $k$ columns of the $m$-table of $I_{2^k-a}$ can be obtained by adding each corresponding entry of the $m$-table of $I_a$ by $2^{n-1} - a$.
\end{lem}

\begin{proof} First, since $m_j$ measures the number of $1$'s on a hyperface, $m_j(Q_k - I_a) = 2^{k-1} - m_j(I_a)$. Observe that $I_{2^k-a}$ can be obtained by turning all $0$'s to $1$'s and all $1$'s to $0$'s, which is simply applying the automorphism $x \rightarrow 1\cdots 1 \oplus x$. Thus, 
$$m_j(I_{2^k-a}) = m_j(1\cdots 1 \oplus (Q_k-I_a)) = (2^k-a) - (2^{k-1} - m_j(I_a)) = (2^{k-1}-a) + m_j(I_a).$$
This gives the conclusion for the first row of the m-table. The entry on the $j$th column and second row of the m-table is 
$$(2^k-a) - m_j(I_{2^k-a}) = 2^{k-1} - m_j(I_a) = (2^{k-1}-a) + (a-m_j(I_a)),$$
and $a - m_j(I_a)$ is the corresponding entry in the m-table for $I_a$. This concludes the proof.
\end{proof}

\noindent In particular, the above lemma gives that $m_j(I_{2^{n+1}-a-b}) = (2^n-a-b) + m_j(I_{a+b})$ for each $j$. \\

\noindent We next claim that for $j \geq 1$, $$m_j(g_1(I_{2^n-a})) + m_j(g_2(I_{2^n-b})) = m_j(I_{2^{n+1}-a-b}).$$ Since $I_{a+b} \subseteq Q_n$, $m_0(g_1(I_a)) = m_0(g_2(I_b)) = 0$. By the remark at the end of section 2, we can treat $g_1, g_2$ as automorphism of $Q_n$. In particular, we can without loss of generality give the label $0$ to the entry on the first row and first column. Then all non-zero labels are assigned to the last $n$ columns. By the preceding lemma, 
$$m_j(g_1(I_{2^n-a})) = (2^{n-1}-a) + m_j(g_1(I_a)),$$
$$m_j(g_2(I_{2^n-b})) = (2^{n-1}-b) + m_j(g_2(I_b)).$$
By the previous paragraph, $m_j(I_{2^{n+1}-a-b}) = (2^n-a-b) + m_j(I_{a+b})$ for each $j$, so the claim immediately follows. \\

\noindent Now we construct automorphisms $h_1, h_2 \in \Aut(Q_{n+1})$ that makes $I_{2^n-a}$ and $I_{2^n-b}$ fit by describing its labeling on $m$-table. First, give labels $2, 3, \ldots, n$ as in $g_1$ and $g_2$, which ensures that $m_j(g'_1(I_{2^n-a}))+m_j(g'_2(I_{2^n-b})) = m(I_{2^{n+1}-a-b})$. Then, consider the columns with label $0$ and $1$ in the original $g_1$-labeled and $g_2$-labeled m-table. Suppose such columns are column $0$ and $i$ in the $m$-table of $I_a$ and column $0$ and $j$ in the $m$-table of $I_b$. Then the $m$-tables are:

$$\left( \begin{array}{cc} 0 & m_i(I_a) \\
a & a-m_i(I_a) \end{array} \right) \text{ and } \left( \begin{array}{cc} 0 & m_j(I_b) \\ b & b-m_j(I_b) \end{array} \right).$$

\noindent By our assumption, $m_1(g_1(I_a)) = 0$, so the labeled entry in the second column is $0$, which must be on the first row. Thus, $m_i(I_a) = 0$. By the lemma above, the unlabeled parts of the $m$-table of $I_{2^n-a}$ and $I_{2^n-b}$ are:

$$\left( \begin{array}{cc} 0 & 2^{n-1}-a \\
2^n-a & 2^{n-1} \end{array} \right) \text{ and } \left( \begin{array}{cc} 0 & 2^{n-1}-b+m_j(I_b) \\
2^n-b & 2^{n-1}-m_j(I_b) \end{array} \right).$$

\noindent We know that $m_0(I_{2^{n+1}-a-b}) = 2^n-a-b$ and $m_1(I_{2^{n+1}-a-b}) = 2^{n-1}$. Thus, label $1$ must be given to the entries $2^{n-1}$ and $0$. \\

\noindent Observe that $m_i(I_a)=0$ is labeled $1$ in the original labeling. Since $m_i(I_{a+b}) = a+b-2^{n-1}$, the entry in the $m$-table of $I_b$ that is given the same label
must take the value $a+b-2^{n-1}$. The entry in the same column but different row is then $b-(a+b-2^{n-1}) = 2^{n-1}-a$. By the lemma, the corresponding entry in the $m$-table of $I_{2^n-b}$ is $(2^{n-1}-b)+(2^{n-1}-a) = 2^n-a-b$, which is precisely $m_0(I_{2^n-a-b})$. Thus, label $0$ must be given to the entries $0$ and $2^n-a-b$. This finishes the case where $a'=0$ or $b'=0$. \\

\noindent Now suppose $a', b' \neq 0$. Then, treating $I_a, I_b, I_{a+b}$ as subsets of $Q_n$ and $g_1, g_2$ as automorphism of $Q_n$, we can apply Lemma \ref{zero matching} and see that there is a column $j$ such that either $m_j(g_1(I_a)) = a$ and $m_j(g_2(I_b)) = 0$ or $m_j(g_1(I_a)) = 0$ and $m_j(g_2(I_b)) = b$. Without loss of generality, assume that the former is true. Then $g_1(I_a) \subseteq F_{j,1}$ and $g_2(I_b) \subseteq F_{j,0}$. \\

\noindent Observe that if $g_1(I_a) \subseteq F_{j,1}$, then $g(I_a) \cup F_{j, 0}$ is isomorphic to $I_{2^n + a}$. Fix $h \in \Aut(Q_{n+1})$ such that $g_1(I_a) \cup F_{j, 0} = h(I_{2^n+a})$. Since $g_2(I_b) \subseteq F_{j, 0}$, $F_{j, 0} - g_2(I_b)$ is an initial segment. Let $F_{j,0} - g_2(I_b) = h'(I_{2^n-b})$. Then $$h(I_{2^n+a}) = h'(I_{2^n-b}) \cup g_2(I_b) \cup g_1(I_a) = h'(I_{2^n-b}) \cup I_{a+b}$$
which means that $(2^n-b, a+b)$ fits. Since $(2^n-b) + (a+b) + (2^n-a) = 2^{n+1}$, $(2^n-a, 2^n-b)$ also fits. \\

\noindent To prove the converse, it is equivalent to prove the following statement: Suppose $2^{n-1} \leq a, b \leq 2^n$, and $2^n \leq a+b \leq 2^n + 2^{n-1}$. If $(a, b)$ fits, then $(2^n-a, 2^n-b)$ fits. \\

\noindent Suppose $a' = m_0(g_1(I_a))$ and $b' = m_0(g_2(I_b))$. Consider the case when $a' = 0$ or $b' = 0$, and without loss of generality, assume that $a' = 0$. Then $g_1(I_a) \subseteq F_{0,0}$, so $$g_2(I_b) = (F_{0,0} - g_1(I_a)) \cup (I_{a+b} \cap F_{0, 1})$$
Note that $F_{0,0} - g_1(I_a)$ is an initial segment of length $2^n-a$, and $I_{a+b} \cap F_{0,1}$ is an initial segment of length $a+b-2^n$, so $(2^n-a, a+b-2^n)$ fits. But $(2^n-a) + (a+b-2^n) + (2^n-b) = 2^n$, so $(2^n-a, 2^n-b)$ also fits. \\

\noindent Lastly, consider the case when $a', b' \neq 0$. Then, as above, we can assume without loss of generality that there is a column $j$ such that $m_j(g_1(I_a)) = a$ and $m_j(g_2(I_b)) = 0$. In the same way, there is $h \in \Aut(Q_{n+1})$ such that $g_1(I_a) \cup F_{j, 0} = h(I_{2^n+a})$. Now since $m_j(g_2(I_b)) = 0$, $g_2(I_b) \subseteq F_{j, 0}$, and $F_{j, 0} - g_2(I_b)$ is an initial segment. Let $F_{j,0} - g_2(I_b) = h'(I_{2^n-b})$. Then $$h(I_{2^n+a}) = h'(I_{2^n-b}) \cup g_2(I_b) \cup g_1(I_a) = h'(I_{2^n-b}) \cup I_{a+b}$$
which means that $(2^n-b, a+b)$ fits. Since $(2^n-b) + (a+b) + (2^n-a) = 2^{n+1}$, $(2^n-a, 2^n-b)$ also fits.
\end{proof}

\subsection{New Large Triangle}
\begin{prop}
\label{New Large Triangle}
The set 
$$\{(a, b): a < 2^n, b < 2^n - 2^{n-2}, a + b > 2^n + 2^{n-1}\}$$
consists of only unfit pairs. This counts $(4^{n-2} - 3\cdot 2^{n-2} + 2)/2$ unfit pairs.
\end{prop}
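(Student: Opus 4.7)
The plan is to invoke Theorem \ref{main theorem} and derive a contradiction from the required $m$-vector identity. First, the hypotheses on $(a,b)$ force $2^{n-1} + 2^{n-2} < a < 2^n$ (via $a > 2^n + 2^{n-1} - b > 2^{n-1} + 2^{n-2}$) and symmetrically $2^{n-1} < b < 2^{n-1} + 2^{n-2}$, so both $a$ and $b$ lie strictly between $2^{n-1}$ and $2^n$. The crucial structural observation is that, under these bounds, the only entry of the $m$-table of $I_a \subseteq Q_{n+1}$ strictly exceeding $2^{n-1}$ is the entry $a$ at position $(1, 0)$: the ordering lemma gives $m_j(I_a) \leq \lfloor a/2 \rfloor < 2^{n-1}$ for every $j$, while for $j \geq 1$ the bound $m_j(I_a) \geq m_1(I_a) = a - 2^{n-1}$ yields $a - m_j(I_a) \leq 2^{n-1}$. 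The identical statement holds for $I_b$. Moreover one computes directly that $m_0(I_{a+b}) = m_1(I_{a+b}) = a+b - 2^n$, which exceeds $2^{n-1}$.

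Now I will suppose for contradiction that $(a,b)$ is fit, so there exist $g_1, g_2 \in \Aut(Q_{n+1})$ with $m(g_1(I_a)) + m(g_2(I_b)) = m(I_{a+b})$. Let $\ell_a$ and $\ell_b$ denote the labels that $g_1$ and $g_2$ assign to the position $(1, 0)$ of the respective $m$-tables, so that $m_{\ell_a}(g_1(I_a)) = a$ and $m_{\ell_b}(g_2(I_b)) = b$. Case-split on these labels. If $\ell_a \in \{0, 1\}$, then $m_{\ell_a}(g_2(I_b)) = m_{\ell_a}(I_{a+b}) - a = (a+b - 2^n) - a = b - 2^n < 0$, impossible; the case $\ell_b \in \{0, 1\}$ is symmetric. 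Otherwise $\ell_a, \ell_b \geq 2$. If additionally $\ell_a = \ell_b$, both $a$ and $b$ contribute to coordinate $\ell_a$, giving $m_{\ell_a}(I_{a+b}) = a + b$ in violation of Lemma \ref{computational results}(1). If $\ell_a \neq \ell_b$, then $m_{\ell_a}(I_{a+b}) \geq a$ and $m_{\ell_b}(I_{a+b}) \geq b$; since Lemma \ref{computational results}(1) gives $m_j(I_{a+b}) \leq (a+b)/2$ for every $j$, this forces $a, b \leq (a+b)/2$, hence $a = b$, contradicting $a > 2^{n-1} + 2^{n-2} > b$.

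For the enumeration, fix $b \in \{2^{n-1}+1, \ldots, 3 \cdot 2^{n-2} - 1\}$; then $a \leq 2^n - 1$ together with $a + b \geq 2^n + 2^{n-1} + 1$ permits exactly $b - 2^{n-1} - 1$ values of $a$. Summing gives $\sum_{k=0}^{2^{n-2}-2} k = (2^{n-2} - 1)(2^{n-2} - 2)/2 = (4^{n-2} - 3 \cdot 2^{n-2} + 2)/2$, matching the claimed count.

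I expect the main obstacle to be the structural observation that only the $(1, 0)$ entry of the $m$-table of $I_a$ (and similarly of $I_b$) exceeds $2^{n-1}$ under the stated bounds; this is the insight that rigidly constrains where the entries $a$ and $b$ can land in $m(g_i(I_{\bullet}))$ and makes the case analysis on $(\ell_a, \ell_b)$ collapse cleanly. Once that observation is in hand, the remaining steps are routine applications of the ordering lemma and arithmetic.
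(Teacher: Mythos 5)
Your reduction to the $m$-vector identity via Theorem \ref{main theorem}, the derived bounds $2^{n-1}+2^{n-2} < a < 2^n$ and $2^{n-1} < b < 2^{n-1}+2^{n-2}$, and the enumeration at the end are all correct, and your route is genuinely different from the paper's: the paper passes to the complementary pair $(b,\, 2^{n+1}-a-b)$ via the six-fold symmetry and obtains a contradiction at coordinate $0$ alone from Lemma \ref{m-table min}, whereas you analyze $(a,b)$ directly by tracking where the ``full'' entries $a$ and $b$ can land. However, there is a genuine gap in your case analysis: you assume that position $(1,0)$ of each $m$-table carries a label, i.e.\ that $\ell_a$ and $\ell_b$ exist. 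A proper labeling assigns the column-$0$ label to exactly one of the two entries $0$ and $a$; if $g_1 = x\oplus\sigma$ has $x_{\sigma(0)}=0$, that label sits at $(0,0)$, no coordinate of $m(g_1(I_a))$ equals $a$, and instead some coordinate equals $0$. All four of your cases presuppose $m_{\ell_a}(g_1(I_a))=a$ and $m_{\ell_b}(g_2(I_b))=b$, so the configurations in which one or both column-$0$ labels land on the zero entry are not treated.

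These configurations can be ruled out, but not for free. Since every coordinate of $m(I_{a+b})$ is at least $m_0(I_{a+b})=a+b-2^n>2^{n-1}$, while every entry of the $m$-table of $I_b$ other than $b$ is at most $2^{n-1}$ (your own structural observation), a coordinate $\ell$ with $m_\ell(g_1(I_a))=0$ forces $m_\ell(g_2(I_b))=b$ and hence $m_\ell(I_{a+b})=b$; in particular both column-$0$ labels cannot sit on the zero entries. For $\ell\in\{0,1\}$ this gives $a+b-2^n=b$, i.e.\ $a=2^n$, which is excluded; but for $\ell\geq 2$ you need the estimate $m_\ell(I_{a+b})\geq m_2(I_{a+b})\geq 3\cdot 2^{n-2}>b$, which holds because $a+b>3\cdot 2^{n-1}$ means column $2$ of $I_{a+b}$ in $Q_{n+1}$ contains at least three full blocks of $2^{n-2}$ ones, while $b<2^n-2^{n-2}=3\cdot 2^{n-2}$. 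This bound is strictly stronger than Lemma \ref{computational results}(3) and appears nowhere in your argument. (The mirror configuration $m_\ell(g_2(I_b))=0$ is easier: it forces $m_\ell(I_{a+b})=a$, contradicting $m_\ell(I_{a+b})\leq (a+b)/2<a$.) With these additions your proof closes; as written, the case split is incomplete.
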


\begin{proof}
Set $c = 2^{n+1} - a - b$. By the bounds on $a$ and $b$, it is equivalent to prove that the set $$\{(b,c): 2^{n-1} < b < 2^n - 2^{n-2}, 2^{n-2} < c < 2^{n-1}, b+c > 2^n\}$$
consists only of unfit pairs. Suppose for the sake of contradiction that $m(g_1(I_b)) + m(g_2(I_c)) = m(I_{b+c})$. Since $2^{n-1} < b < 2^n - 2^{n-2}$ and $ 2^{n-2} < c < 2^{n-1}$, $m_i(I_b) \geq b - 2^{n-1}$ and $m_i(I_c) \geq c - 2^{n-2}$ if $m_i(I_b)$ and $m_i(I_c)$ are non-zero. \\

\noindent Since $c < 2^{n-1}$, $b+c < b+2^{n-1}$, so $b - 2^{n-1} > b + c -2^n$. Since $b < 2^n-2^{n-2}$, $b+c < 2^n - 2^{n-2} + c$, so $c - 2^{n-2} > b + c - 2^n$. Hence, if $m_i(g_1(I_b)) + m_i(g_2(I_c)) \neq 0$, then $m_0(g_1(I_b)) + m_0(g_2(I_c)) \geq \min \{b - 2^{n-1}, c - 2^{n-2}\} > b+c-2^n = m_0(I_{b+c})$. So $m_0(g_1(I_c)) = m_0(g_2(I_c)) = m_0(I_{b+c}) = 0$. But since $b + c > 2^n$, $m_0(I_{b+c}) \neq 0$, contradiction.
\end{proof}

\subsection{Remaining Cases}
\noindent We need to count the collection of squares at the bottom right of Figure \ref{counterexamples} that constitute the remaining unfit pairs. First, we prove some basic lemmas about the m-vectors. As usual, we assume that every subset of hypercubes lives in $Q_{n+1}$ so that the m-vector consists of $(m_0, m_1, \ldots, m_n)$.

\begin{lem}
\label{m of 2a and 2a+1}
\begin{enumerate}
\item Let $a \leq 2^n$. Then $$m_j(I_{2a}) = \left\{ \begin{array}{lcl}
2m_{j+1}(I_a) & \mbox{if}
& j \leq n - 1 \\ a & \mbox{if} & j = n
\end{array}\right.$$
\item Let $a < 2^n$. Then $$m_j(I_{2a+1}) = \left\{ \begin{array}{lcl}
m_{j+1}(I_a) + m_{j+1}(I_{a+1}) & \mbox{if}
& j \leq n - 1 \\ a & \mbox{if} & j = n
\end{array}\right.$$
\end{enumerate}
\end{lem}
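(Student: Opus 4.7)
The plan is to argue directly from the definition by tracking how the binary representation of an integer transforms under the maps $k \mapsto 2k$ and $k \mapsto 2k+1$, keeping careful account of the paper's convention that position $0$ is the most significant digit.

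First I would set up the basic bit-shift correspondence. For $k < 2^{n+1}$, the entry in position $i$ of $k \in Q_{n+1}$ is the coefficient of $2^{n-i}$ in the expansion of $k$. Consequently, for $k < 2^n$ and any $0 \le j \le n-1$, the position-$j$ bit of $2k$ equals the position-$(j+1)$ bit of $k$, while the position-$n$ bit of $2k$ is $0$. For $2k+1$, position $n$ flips from $0$ to $1$, and all other positions agree with those of $2k$ (since for $j \le n-1$ the $+1$ only affects the bit of weight $1$).

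For part (1), I would partition $I_{2a} = \{0,1,\ldots,2a-1\}$ into the $a$ consecutive pairs $\{2m,2m+1\}$, $m=0,\ldots,a-1$. When $j=n$, each pair contributes exactly one $1$, so $m_n(I_{2a})=a$. When $j \le n-1$, both members of the pair share the position-$j$ bit, and by the correspondence above this bit equals the position-$(j+1)$ bit of $m$. Summing over $m \in I_a$ and doubling gives
\[
m_j(I_{2a}) \;=\; 2\sum_{m=0}^{a-1} [\text{position-}(j+1) \text{ bit of } m \text{ is }1] \;=\; 2\,m_{j+1}(I_a).
\]

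For part (2), I would write $I_{2a+1} = I_{2a} \cup \{2a\}$, so that $m_j(I_{2a+1}) = m_j(I_{2a}) + b_j(2a)$, where $b_j(2a)$ denotes the position-$j$ bit of $2a$. For $j=n$ this adds $0$, giving $a$ again. For $j \le n-1$, the bit-shift correspondence gives $b_j(2a)=b_{j+1}(a)$; and since $I_{a+1}=I_a \cup \{a\}$, we have $b_{j+1}(a)=m_{j+1}(I_{a+1})-m_{j+1}(I_a)$. Combining with part (1),
\[
m_j(I_{2a+1}) \;=\; 2\,m_{j+1}(I_a) + m_{j+1}(I_{a+1}) - m_{j+1}(I_a) \;=\; m_{j+1}(I_a)+m_{j+1}(I_{a+1}),
\]
as desired.

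There is no real obstacle here; the lemma is essentially an index-bookkeeping exercise. The only point requiring attention is the index-reversal convention (position $0$ is the \emph{most} significant digit), which must be respected uniformly so that the shift from $Q_{n+1}$ to the induced structure on pairs gives rise to the $j \mapsto j+1$ re-indexing that appears in the statement.
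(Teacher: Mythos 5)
Your proof is correct and follows essentially the same route as the paper's: both rest on the observation that the bits of $2m$ (and $2m+1$) in positions $j\le n-1$ are the bits of $m$ shifted by one, the paper phrasing this as an even/odd partition of $I_{2a}$ where you group into pairs $\{2m,2m+1\}$. Your derivation of part (2) from part (1) via $I_{2a+1}=I_{2a}\cup\{2a\}$ is a harmless minor variation on the paper's direct even/odd count.
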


\begin{proof} 
For (1), let $E \subseteq I_{2a}$ consist of all the even vertices (i.e., vertices that represent an even integer), and $O \subseteq I_{2a}$ consist of all the odd vertices. The claim follows by noting that for each $j < n$, $m_j(E) = m_j(O) = m_{j+1}(I_a)$, and when $j=n$, $m_j(I_{2a} = a$. For (2), we again write $E, O \subseteq I_{2a+1}$ as the collection of even and odd vertices, respectively. Then we have $m_j(E) = m_{j+1}(I_{a+1})$ and $m_j(O) = m_{j+1}(I_a)$ for $j<n$. For $j=n$, $m_j(I_{2a+1}) = \lfloor (2a+1)/2 \rfloor = a$.
\end{proof}

\noindent A quick examination of the $m$-tables gives the following corollary:

\begin{cor}
\label{Double a and b}
$(a, a), (a, a+1), (a, a-1)$ fits. Furthermore, if $(a, b)$ fits, then $(2a, 2b)$ fits.
\end{cor}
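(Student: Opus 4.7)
My plan is to exhibit explicit automorphisms witnessing each fit claim, with the common tool being the cyclic shift $\phi \in \Aut(Q_{n+1})$ defined by $\phi(x_0 x_1 \cdots x_n) = x_1 x_2 \cdots x_n x_0$. Setting $e_n = 0\cdots 01$, the automorphism $y \mapsto e_n \oplus y$ swaps the even and odd halves of $Q_{n+1}$. For any initial segment $I_k \subseteq Q_n \hookrightarrow F_{0,0} \subseteq Q_{n+1}$, one checks directly that $\phi(I_k) = \{2k' : 0 \le k' < k\}$ (the even integers below $2k$) and $e_n \oplus \phi(I_k) = \{2k'+1 : 0 \le k' < k\}$ (the odd integers below $2k$). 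In particular $I_{2k} = \phi(I_k) \sqcup (e_n \oplus \phi(I_k))$, which is essentially Lemma~\ref{m of 2a and 2a+1} expressed in integer rather than $m$-vector language.

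For the first part of the corollary, I split $I_{2a}$, $I_{2a+1}$, and $I_{2a-1}$ along parity: (i) $I_{2a} = \phi(I_a) \sqcup (e_n \oplus \phi(I_a))$ realizes $(a,a)$; (ii) $I_{2a+1} = \phi(I_{a+1}) \sqcup (e_n \oplus \phi(I_a))$ realizes $(a+1,a)$, because the extra element $2a$ is even; (iii) $I_{2a-1} = \phi(I_a) \sqcup (e_n \oplus \phi(I_{a-1}))$ realizes $(a,a-1)$, because the missing element $2a-1$ is odd. In each case both halves are images of an initial segment under an element of $\Aut(Q_{n+1})$, so the respective pair fits; Observation~\ref{fitobs} then yields the remaining ordering $(a,a+1)$.

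For the doubling implication, suppose $g_1(I_a) \cup g_2(I_b) = I_{a+b}$ in $Q_n$, and view $g_1, g_2 \in \Aut(Q_{n+1})$ as fixing position $0$ (using the observation at the end of Section~2). I define $h_i := \phi g_i \phi^{-1}$ for $i=1,2$ and claim $h_i(I_{2x}) = \phi g_i(I_x) \cup (e_n \oplus \phi g_i(I_x))$ for $x \in \{a,b\}$. This follows from $\phi^{-1}(e_n) = e_0$ together with the commutation identity $g_i(e_0 \oplus y) = e_0 \oplus g_i(y)$, which is immediate from the Automorphism Decomposition (Theorem~\ref{autdecomp}) given that the permutation part of $g_i$ fixes position $0$. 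Combining these yields
\[
h_1(I_{2a}) \cup h_2(I_{2b}) = \phi\bigl(g_1(I_a) \cup g_2(I_b)\bigr) \cup \bigl(e_n \oplus \phi(g_1(I_a) \cup g_2(I_b))\bigr) = \phi(I_{a+b}) \cup \bigl(e_n \oplus \phi(I_{a+b})\bigr) = I_{2a+2b},
\]
so $(2a, 2b)$ fits. The main (mild) obstacle is establishing the commutation identity cleanly; beyond that, the argument is a direct verification.
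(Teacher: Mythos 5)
Your proof is correct, but it takes a genuinely different route from the paper. The paper obtains this corollary as a two-line consequence of its machinery: Lemma \ref{m of 2a and 2a+1} gives the $m$-vectors of $I_{2a}$ and $I_{2a+1}$ in terms of those of $I_a$ and $I_{a+1}$, one reads off from the $m$-tables that the required additive identities $m(g_1(I_a))+m(g_2(I_b))=m(I_{a+b})$ hold for suitable labelings, and Theorem \ref{main theorem} converts that back into the set-level statement. You instead bypass the main theorem entirely and exhibit the witnessing automorphisms explicitly: the cyclic shift $\phi$ sends $I_k \subseteq F_{0,0}$ to the even integers below $2k$, XOR by $e_n$ moves these to the odds, and the parity decomposition $I_{2k}=\phi(I_k)\sqcup(e_n\oplus\phi(I_k))$ does all the work --- this is the same even/odd splitting that underlies Lemma \ref{m of 2a and 2a+1}, but carried out on sets rather than on $m$-vectors. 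For the doubling step your conjugation $h_i=\phi g_i\phi^{-1}$ and the commutation $g_i(e_0\oplus y)=e_0\oplus g_i(y)$ (valid because the permutation part of the extended $g_i$ fixes position $0$, by Theorem \ref{autdecomp}) are exactly what is needed, and the computation closes correctly. What your approach buys is a constructive, self-contained proof that does not depend on the (nontrivial) main theorem and actually produces the automorphisms; what the paper's approach buys is brevity, since the $m$-table formalism has already been paid for. Both are complete; one small bookkeeping point worth a sentence in a final write-up is that verifying fitness in the smallest hypercube containing $I_{a+b}$ suffices, since automorphisms extend by the identity on leading coordinates.
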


\begin{defn}
Let $a \in \N$. Define $v_2(a)$ as the largest $j$ such that $2^j$ divides $a$.
\end{defn}

\begin{prop}
Let $n \geq 2$, and $a, b \leq 2^n$. If $$|a - b| \leq 2^{\max \{v_2(a), v_2(b)\}},$$
then $(a, b)$ fits.
\end{prop}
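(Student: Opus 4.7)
The plan is to strongly induct on $a + b$. The base case $|a - b| \leq 1$ is immediate from Corollary \ref{Double a and b}. So assume $|a - b| \geq 2$; without loss of generality $v_2(a) \geq v_2(b)$, and set $k := v_2(a) \geq 1$.

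When $b$ is also even, I would write $a = 2a_0$ and $b = 2b_0$. Then $v_2(a_0) = k - 1 \geq v_2(b_0)$ and $|a_0 - b_0| = |a-b|/2 \leq 2^{k-1}$, so the inductive hypothesis applies to $(a_0, b_0)$, and the doubling part of Corollary \ref{Double a and b} then yields a fit of $(a, b) = (2a_0, 2b_0)$.

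When $b$ is odd, write $a = 2a_0$ and $b = 2b_0 + 1$. Using that $|a - b|$ is odd and at most $2^k - 1$, a short calculation confirms that both $|a_0 - b_0|$ and $|a_0 - (b_0 + 1)|$ are at most $2^{k-1}$, so the inductive hypothesis gives fits for both $(a_0, b_0)$ and $(a_0, b_0 + 1)$ in $Q_n$. To assemble these into a fit of $(a, b)$ in $Q_{n+1}$, I would slice $Q_{n+1}$ along its last coordinate: by Lemma \ref{m of 2a and 2a+1}, $I_a$ restricts to two copies of $I_{a_0}$, $I_b$ restricts to $I_{b_0+1}$ in the even half and $I_{b_0}$ in the odd half, and $I_{a+b}$ restricts to $I_{a_0+b_0+1}$ and $I_{a_0+b_0}$ respectively. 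A \emph{common witness} $(h_1, h_2) \in \Aut(Q_n)^2$ --- a single pair satisfying both $h_1(I_{a_0}) \cup h_2(I_{b_0}) = I_{a_0+b_0}$ and $h_1(I_{a_0}) \cup h_2(I_{b_0+1}) = I_{a_0+b_0+1}$ --- when extended trivially to $Q_{n+1}$ produces global automorphisms realizing $(a, b)$ as a fit.

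The hard part will be producing such a common witness. This reduces to finding a witness $(h_1, h_2)$ of $(a_0, b_0)$ with the extra single-vertex-matching property that $h_2$ sends the $b_0$-th vertex of $Q_n$ (in the binary order) to the $(a_0 + b_0)$-th vertex; that property automatically upgrades the witness to also realize $(a_0, b_0 + 1)$. I plan to secure this by using the freedom within $\Aut(Q_n)$: after fixing the image set $h_2(I_{b_0})$, the automorphism $h_2$ is only determined up to composition with the stabilizer of $I_{b_0}$, giving several candidate placements for the boundary vertex $h_2(e_{b_0})$. Should this direct combinatorial coordination fail in some case, the backup is to abandon the trivial-lift construction, work at the level of $m$-vectors via Theorem \ref{main theorem}, and build $g_1, g_2$ with possibly nontrivial action on the last coordinate by combining the two inductive $m$-vector identities via the additive formulas in Lemma \ref{m of 2a and 2a+1}.
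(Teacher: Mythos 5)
Your reduction matches the paper's: both split off the last binary digit, reduce $(a,b)$ with $b$ odd to the two pairs $(a_0,b_0)$ and $(a_0,b_0+1)$, and recombine via Lemma \ref{m of 2a and 2a+1}. But there is a genuine gap exactly where you flag ``the hard part'': your inductive hypothesis is only the \emph{existence} of some witness for each of $(a_0,b_0)$ and $(a_0,b_0+1)$, and two unrelated witnesses cannot in general be glued across the two halves of $Q_{n+1}$. Your proposed fix --- find a witness $(h_1,h_2)$ of $(a_0,b_0)$ with the extra property that $h_2$ sends the $b_0$-th vertex to the $(a_0+b_0)$-th vertex --- would indeed upgrade it to a common witness, but you give no argument that such a witness exists; the stabilizer of $I_{b_0}$ in $\Aut(Q_n)$ need not move $h_2(e_{b_0})$ onto the required vertex, and nothing in your sketch rules out a pair for which every witness of $(a_0,b_0)$ fails this matching condition. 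As written, the odd case is a plan with an acknowledged hole, not a proof.

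The paper closes this hole by strengthening the statement being inducted on: instead of ``$(a_0,b_0)$ fits,'' it proves an explicit coordinate-by-coordinate identity, namely that $m_j(I_{a+b})$ equals $m_j(I_a)+m_j(I_b)$ for $j\geq n-v_2(a)$, equals $b$ at the single index $j=n-v_2(a)-1$, and equals $m_{j+1}(I_a)+m_{j+1}(I_b)$ below that (with a symmetric version when $v_2(b)$ is the larger valuation). Because the inductive hypothesis now pins down \emph{which} $m$-table labeling witnesses the fit, the identities for $(a_0,b_0)$ and $(a_0,b_0+1)$ are automatically aligned, and Lemma \ref{m of 2a and 2a+1} combines them mechanically; fitness then follows from Theorem \ref{main theorem}. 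This is essentially your ``backup'' route, and it is the step that actually has to be carried out: to repair your proposal you should discard the common-witness search, formulate the explicit $m$-vector formula as the induction statement, and verify it in the even and odd cases (your arithmetic showing $|a_0-b_0|$ and $|a_0-(b_0+1)|$ stay within the required bounds is correct and carries over).
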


\begin{proof}
Without loss of generality, assume that $a \geq b$. By the previous corollary, the case $|a-b| \leq 1$ is proved to be fit. Hence, we can assume that $a - b \geq 2$. \\

\noindent \textbf{Case 1:} $a - b \leq 2^{v_2(a)}$. \\

\noindent We'll prove that in this case, $$m_j(I_{a+b}) = \left\{ \begin{array}{lcl}
m_{j}(I_a) + m_{j}(I_b) & \mbox{if}
& j \geq n - v_2(a) \\ b & \mbox{if} & j = n - v_2(a) - 1 \\ m_{j+1}(I_a) + m_{j+1}(I_b) & \mbox{if} & j \leq n - v_2(a) - 2
\end{array}\right.$$

\noindent The case when $n=2$ can be readily verified. Suppose for induction that the above formula is true for $a, b \leq 2^{n-1}$. Now let $a, b \leq 2^n$. If $v_2(a) = 0$, then $a - b \leq 1$, but we've assumed that $a - b \geq 2$. Thus, $v_2(a) \geq 1$. If $v_2(b) \geq 1$ as well, then $a/2$ and $b/2$ are integers at most $2^{n-1}$, and $$\frac{a}{2} - \frac{b}{2} = \frac{1}{2}(a-b) \leq \frac{1}{2} 2^{v_2(a)} = 2^{v_2(a/2)}$$

\noindent By inductive hypothesis, $(a/2, b/2)$ satiesfies the above equation. Combining this with Lemma \ref{m of 2a and 2a+1}(1) (multiply everything by $2$) shows that the above equation also holds for $(a,b)$. Thus, we assume that $v_2(b) = 0$, i.e., $b$ is odd.\\

\noindent Let $a' = a/2$, $b' = (b-1)/2$. Since $a - 2^{v_2(a)}$ is even and $b$ is odd, $a - b \leq 2^{v_2(a)} \Longleftrightarrow b \geq a - 2^{v_2(a)}$ implies that $b-1 \geq a - 2^{v_2(a)}$. Thus, $$b' \geq a' - 2^{v_2(a) - 1} = a' - 2^{v_2(a')}$$

\noindent Furthermore, since $a-b \geq 2$, $b + 1 < a$, which means that $a' > b' + 1$. Moreover, $$b' + 1 = \frac{b}{2} + \frac{1}{2} \geq \frac{a}{2} - 2^{v_2(a)-1} + \frac{1}{2} > a' - 2^{v_2(a')}$$

\noindent By the inductive hypothesis and the inequalities above, and by the fact that $v_2(a') = v_2(a)-1$, we see that $$m_j(I_{a'+b'}) = \left\{ \begin{array}{lcl}
m_{j}(I_{a'}) + m_{j}(I_{b'}) & \mbox{if}
& j \geq n - v_2(a) + 1 \\ b' & \mbox{if} & j = n - v_2(a) \\ m_{j+1}(I_{a'}) + m_{j+1}(I_{b'}) & \mbox{if} & j \leq n - v_2(a) - 1
\end{array}\right.$$
$$m_j(I_{a'+b'+1}) = \left\{ \begin{array}{lcl}
m_{j}(I_{a'}) + m_{j}(I_{b'+1}) & \mbox{if}
& j \geq n - v_2(a) + 1 \\ b'+1 & \mbox{if} & j = n - v_2(a) \\ m_{j+1}(I_{a'}) + m_{j+1}(I_{b'+1}) & \mbox{if} & j \leq n - v_2(a) - 1
\end{array}\right.$$

\noindent By Lemma \ref{m of 2a and 2a+1}, we also have $$m_j(I_{a+b}) = m_j(I_{2(a'+b')+1}) = \left\{ \begin{array}{lcl}
m_{j+1}(I_{a'+b'}) + m_{j+1}(I_{a'+b'+1}) & \mbox{if}
& j \leq n - 1 \\ a'+b' & \mbox{if} & j = n
\end{array}\right.$$

\noindent Thus, when $j = n$, $m_j(I_{a+b}) = a' + b' = m_j(I_{a}) + m_j(I_b)$. When $n - v_2(a) \leq j < n$ (equivalently, $n-v_2(a)+1 \leq j+1 \leq n$), $$m_j(I_{a+b}) = m_{j+1}(I_{a'+b'}) + m_{j+1}(I_{a'+b'+1}) = m_{j+1}(I_{a'}) + m_{j+1}(I_{b'}) + m_{j+1}(I_{a'}) + m_{j+1}(I_{b'+1}) $$
$$= m_j(I_a) + m_j(I_b)$$

\noindent When $j = n - v_2(a) - 1$ (equivalently, $j+1 = n-v_2(a)$, $$m_j(I_{a+b}) = m_{j+1}(I_{a'+b'}) + m_{j+1}(I_{a'+b'+1}) = b' + (b' + 1) = 2b' + 1 = b$$

\noindent When $j \leq n - v_2(a) - 2$ (equivalently, $j+1 \leq n - v_2(a) - 1$), $$m_j(I_{a+b}) = m_{j+1}(I_{a'+b'}) + m_{j+1}(I_{a'+b'+1}) = m_{j+2}(I_{a'}) + m_{j+2}(I_{b'}) + m_{j+2}(I_{a'}) + m_{j+2}(I_{b'+1}) $$
$$= m_{j+1}(I_a) + m_{j+1}(I_b)$$

\noindent This completes the inductive step and proves the result for Case 1. \\

\noindent \textbf{Case 2:} $a - b \leq 2^{v_2(b)}$. \\

\noindent We'll prove that in this case, $$m_j(I_{a+b}) = \left\{ \begin{array}{lcl}
m_{j}(I_a) + m_{j}(I_b) & \mbox{if}
& j \geq n - v_2(b) \\ b & \mbox{if} & j = n - v_2(b) - 1 \\ m_{j+1}(I_a) + m_{j+1}(I_b) & \mbox{if} & j \leq n - v_2(b) - 2
\end{array}\right.$$

\noindent The case when $n=2$ can be readily verified. Suppose for induction that the above formula is true for $a, b \leq 2^{n-1}$. Now let $a, b \leq 2^n$. By the same reasoning, we can assume that $v_2(b) \geq 1$ and $v_2(a)= 0$. \\

\noindent Let $a' = (a + 1)/2$ and $b' = b/2$. Since $b - 2^{v_2(b)}$ is even and $a$ is odd, $a - b \leq 2^{v_2(b)} \Longleftrightarrow a \leq b + 2^{v_2(b)}$ implies that $a+1 \leq b + 2^{v_2(b)}$. Thus, $$a' \geq b' - 2^{v_2(b) - 1} = a' + 2^{v_2(b')}$$

\noindent By $a - b \geq 2$, we again have $a' > b' + 1$. Moreover, $$a' - 1 = \frac{a}{2} - \frac{1}{2} \leq \frac{b}{2} + 2^{v_2(b)-1} - \frac{1}{2} < b' + 2^{v_2(b')}$$

\noindent By the inductive hypothesis and the inequalities above, and by the fact that $v_2(a') = v_2(a)-1$, we see that $$m_j(I_{a'+b'}) = \left\{ \begin{array}{lcl}
m_{j}(I_{a'}) + m_{j}(I_{b'}) & \mbox{if}
& j \geq n - v_2(b) + 1 \\ b' & \mbox{if} & j = n - v_2(b) \\ m_{j+1}(I_{a'}) + m_{j+1}(I_{b'}) & \mbox{if} & j \leq n - v_2(b) - 1
\end{array}\right.$$
$$m_j(I_{a'+b'+1}) = \left\{ \begin{array}{lcl}
m_{j}(I_{a'-1}) + m_{j}(I_{b'}) & \mbox{if}
& j \geq n - v_2(b) + 1 \\ b'+1 & \mbox{if} & j = n - v_2(b) \\ m_{j+1}(I_{a'-1}) + m_{j+1}(I_{b'}) & \mbox{if} & j \leq n - v_2(b) - 1
\end{array}\right.$$

\noindent By Lemma \ref{m of 2a and 2a+1}, we also have $$m_j(I_{a+b}) = m_j(I_{2((a'-1)+b')+1}) = \left\{ \begin{array}{lcl}
m_{j+1}(I_{a'+b'-1}) + m_{j+1}(I_{a'+b'}) & \mbox{if}
& j \leq n - 1 \\ a'+b'-1 & \mbox{if} & j = n
\end{array}\right.$$

\noindent The rest of the proof follows by the same reasoning as Case 1.
\end{proof}

\begin{lem}
\label{v2(a+b)}
If $v_2(a) < v_2(b)$, then $v_2(a+b) = v_2(a), v_2(b)$. If $a > b$, then $v_2(a-b) = v_2(a)$.
\end{lem}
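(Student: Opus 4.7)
The plan is to reduce both parts to a short $2$-adic factorization. Writing $a = 2^{v_2(a)} a'$ and $b = 2^{v_2(b)} b'$ with $a', b'$ odd (which is just the definition of $v_2$), and using the hypothesis $v_2(a) < v_2(b)$ to pull out the smaller power of two, I would obtain
\[
a + b \;=\; 2^{v_2(a)}\bigl(a' + 2^{v_2(b) - v_2(a)}\, b'\bigr).
\]
Since $v_2(b) - v_2(a)$ is a positive integer, the term $2^{v_2(b) - v_2(a)} b'$ is even while $a'$ is odd, so the parenthesized factor is odd; this factorization therefore exhibits $v_2(a+b) = v_2(a)$ directly. I am reading the printed statement ``$v_2(a+b) = v_2(a), v_2(b)$'' as shorthand for $v_2(a+b) = \min\{v_2(a), v_2(b)\} = v_2(a)$, which is the standard non-Archimedean triangle equality.

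For the second claim, continuing under the same hypothesis $v_2(a) < v_2(b)$ and using $a > b$ to guarantee $a - b > 0$, I would apply the identical factorization
\[
a - b \;=\; 2^{v_2(a)}\bigl(a' - 2^{v_2(b) - v_2(a)}\, b'\bigr).
\]
The parenthesized factor is now the difference of an odd integer and an even integer, hence odd, so $v_2(a - b) = v_2(a)$.

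There is essentially no obstacle here: the lemma is the classical ultrametric property of the $2$-adic valuation, and it is used later only as a bookkeeping tool about how the ``even parts'' of $a$ and $b$ combine under addition and subtraction. I expect the final write-up to consist of nothing more than the two displays above together with the one-sentence parity observation on the cofactor; no induction, no case analysis, and no invocation of the earlier machinery (m-vectors, m-tables, \autoref{main theorem}, etc.) is needed.
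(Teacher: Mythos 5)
Your proof is correct and is essentially the paper's own argument in a different dialect: the paper shows $2^{v_2(a)}$ divides $a+b$ while $2^{v_2(a)+1}$ does not (by a divisibility contradiction), which is exactly your observation that the cofactor $a' + 2^{v_2(b)-v_2(a)}b'$ is odd. Your reading of the statement as $v_2(a+b)=\min\{v_2(a),v_2(b)\}=v_2(a)$, and your retention of the hypothesis $v_2(a)<v_2(b)$ for the subtraction claim, both match the paper's intent.
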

\begin{proof}
Since $2^{v_2(a)}$ divides $a$ and $b$, $2^{v_2(a)}$ divides $a+b$. Since $v_2(a) < v_2(b)$, $2^{v_2(a)+1}$ divides $b$. If $2^{v_2(a)+1}$ also divides $a+b$, then $2^{v_2(a)+1}$ would divide $a$, which contradicts the fact that $2^{v_2(a)}$ is the largest power of $2$ that divides $a$. Hence, $2^{v_2(a)+1}$ does not divide $a+b$, so $v_2(a+b) = v_2(a)$. For the same reason, if $a > b$, then $v_2(a-b) = v_2(a)$.
\end{proof}

\begin{prop}
\label{lower bound for fit pairs} Let $x_n$ be the number of pairs $(a, b)$ in $S_n = \{(a, b): 2^n - 2^{n-2} \leq a, b < 2^n\}$ that satisfy $|a - b| \leq 2^{\max \{v_2(a), v_2(b)\}}$. Then 
$$x_{n} \geq 2 x_{n-1} + 2^{n-1} - 2.$$
\end{prop}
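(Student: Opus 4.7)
I will prove $x_n \geq 2x_{n-1} + 2^{n-1} - 2$ by exhibiting four pairwise disjoint families of pairs in $S_n$ satisfying the condition $|a - b| \leq 2^{\max\{v_2(a), v_2(b)\}}$, whose sizes sum exactly to the right-hand side.

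The first family uses the doubling map: for each fit pair $(a, b) \in S_{n-1}$, the pair $(2a, 2b)$ lies in $S_n$ and inherits the condition, because $|2a - 2b| = 2|a - b|$ while $v_2(2c) = v_2(c) + 1$. This contributes $x_{n-1}$ (even, even) pairs. The fourth (easiest) family adds the $2^{n-3}$ odd-diagonal pairs $(c, c)$ with $c$ odd in $[2^n - 2^{n-2}, 2^n - 1]$, and the third adds the $2^{n-1} - 2$ adjacent pairs $(c, c\pm 1)$ within that range, each fit because one coordinate is even and hence $\max v_2 \geq 1 \geq |c - (c\pm 1)|$.

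The second family, the heart of the argument, assigns to each off-diagonal fit pair $(a, b) \in S_{n-1}$ a mixed-parity fit pair in $S_n$ with $|c - d| \geq 3$, via a four-way case split on the signs of $a - b$ and $v_2(a) - v_2(b)$. The maps are $(2a, 2b-1)$, $(2a+1, 2b)$, $(2a-1, 2b)$, and $(2a, 2b+1)$, chosen so the $\pm 1$ shift falls on the coordinate whose $v_2$ is not maximal. Fit preservation in each case reduces to the following \emph{strict-bound observation}: whenever $(a, b)$ is a fit pair with $a \neq b$, the inequality $|a - b| \leq 2^{\max v_2}$ is actually strict. Indeed, if $|a - b| = 2^v$ with $v = \max\{v_2(a), v_2(b)\}$, then writing the smaller of $\{a, b\}$ as $2^v m$ with $m$ odd forces the other to equal $2^v(m \pm 1)$, whose $2$-adic valuation exceeds $v$ by Lemma \ref{v2(a+b)}, contradicting the maximality of $v$. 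This strict inequality is exactly what absorbs the $+1$ in $|c - d| = 2|a - b| + 1 \leq 2^{\max v_2 + 1}$ for each of the four image shapes.

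The four cases produce images with pairwise distinct (parity, order) signatures, so they are disjoint; moreover each image lies in $S_n$ because the only way a coordinate could fall below $2^n - 2^{n-2}$ is if the preimage had $a = b$ taking the minimum value $3 \cdot 2^{n-3}$, contradicting off-diagonality. Summing the four families yields $x_{n-1} + (x_{n-1} - 2^{n-3}) + (2^{n-1} - 2) + 2^{n-3} = 2x_{n-1} + 2^{n-1} - 2$, as required. The main obstacle is the second family: establishing the strict-bound observation and then verifying fit preservation and range membership uniformly across the four cases; once that is done, the rest is bookkeeping.
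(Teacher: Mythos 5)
Your proof is correct, but it takes a genuinely different route from the paper's. The paper splits $S_n$ into four quadrants $T_1,\dots,T_4$, each a translate of $S_{n-1}$ by $2^{n-1}-2^{n-3}$; because that shift has $2$-adic valuation $n-3$, strictly larger than that of every non-corner point of the quadrant, Lemma \ref{v2(a+b)} shows the translation preserves both $|a-b|$ and $\max\{v_2(a),v_2(b)\}$, so $T_1$ and $T_4$ each contribute at least $x_{n-1}$ pairs, and the remaining $2^{n-1}-2$ are read off from one row and one column in each of $T_2$ and $T_3$. You instead stratify $S_n$ by parity and pull pairs back through the doubling map, which is why you need the strictness observation (equality $|a-b|=2^{\max\{v_2(a),v_2(b)\}}$ is impossible for $a\neq b$) to absorb the $\pm1$. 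That observation is true, and your proof of it works once you apply it to the element \emph{achieving} the maximal valuation rather than to the smaller element; you also use tacitly that off-diagonal pairs satisfying the condition must have $v_2(a)\neq v_2(b)$ (if $v_2(a)=v_2(b)=v$ then $|a-b|\geq 2^{v+1}$), which is what makes your four-way split exhaustive and guarantees the $-1$ shift never lands on the minimal value $3\cdot 2^{n-3}$, so the range check goes through. One terminological caution: you say ``fit pair'' where you mean ``pair satisfying the valuation inequality''; the implication between these runs in one direction only, and $x_n$ counts the latter. On balance, the paper's translation argument is shorter because translation preserves differences exactly and needs no strictness, while your doubling argument has the advantage of matching the mechanism used throughout the rest of Section 4 (Corollary \ref{Double a and b} and the $3\times 3$-block construction of $A_n$ in Proposition \ref{size of A_n}), so your four parity families are essentially the complement of the blow-up used there to count the unfit pairs.
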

\begin{proof}
Let $$T_1 = \{(a,b): 2^n-2^{n-2} \leq a, b < 2^n-2^{n-3}\}$$ $$T_2 = \{(a,b): 2^n-2^{n-2} \leq a < 2^n-2^{n-3}, 2^n-2^{n-3} \leq b < 2^n\}$$ $$T_3 = \{(a,b): 2^n-2^{n-3} \leq a < 2^n, 2^n-2^{n-2} \leq b < 2^n-2^{n-3}\}$$ $$T_4 = \{(a,b): 2^n-2^{n-3} \leq a, b < 2^n\}$$
Note that $S_n$ is the disjoint union of $T_1, T_2, T_3, T_4$, and $|S_{n-1}| = |T_1| = |T_2| = |T_3| = |T_4|$. \\

\noindent We first show that the number of pairs that satisfy the inequality in $T_1$ and $T_4$ is at least $x_{n-1}$. If $a = 2^n - 2^{n-2}$, then $v_2(a) = n-2$. If $2^n-2^{n-2} \leq b < 2^n - 2^{n-3}$, then $|a - b| \leq 2^{\max \{v_2(a), v_2(b)\}}$. Similarly, if $b = 2^n - 2^{n-2}$ and $2^n-2^{n-2} \leq a < 2^n - 2^{n-3}$, then $|a - b| \leq 2^{\max \{v_2(a), v_2(b)\}}$. Now if $2^n - 2^{n-2} < a, b < 2^n - 2^{n-3}$, then $v_2(a), v_2(b) < n-3 = v_2(2^{n-1} - 2^{n-3})$, so by the previous lemma, $v_2(a) = v_2(a - (2^{n-1} - 2^{n-3})), v_2(b) = v_2(b - (2^{n-1} - 2^{n-3}))$. By our bounds on $a$ and $b$, $(a - (2^{n-1} - 2^{n-3}), b - (2^{n-1} - 2^{n-3})) \in S_{n-1}$, so if $|(a - (2^{n-1} - 2^{n-3})) - (b - (2^{n-1} - 2^{n-3}))| = |a-b| \leq 2^{\max \{v_2(a - (2^{n-1} - 2^{n-3})), v_2(b - (2^{n-1} - 2^{n-3}))}$, then $|a-b| \leq 2^{\max \{v_2(a), v_2(b)\}}$. Hence, the number of pairs in $T_1$ that satisfy the inequality is at least those in $S_{n-1}$, which is $x_{n-1}$. Similarly, such number in $T_4$ is at least $x_{n-1}$. \\

\noindent Next, note that in $T_2$, the pairs $(2^n - 2^{n-2}, b)$ and $(a, 2^n-2^{n-3})$ satisfies the inequality; in $T_3$, the pairs $(2^n - 2^{n-3}, b)$ and $(a, 2^n-2^{n-2})$ satisfies the inequality. These count $2^{n-1} - 2$ number of pairs. Hence $x_n \geq 2x_{n-1} + 2^{n-1} - 2$.
\end{proof}

\noindent Next, we'll give a lower bound on the number of unfit pairs.

\begin{lem}
\label{column-0 unfit}
Let $$S'_n = \{(a, b): 2^n - 2^{n-2} < a, b < 2^n, |a - b| \geq 2\}.$$
If $(a, b) \in S'_n$ and $(a, b)$ fits, then there is an index $j$ such that $$m_j(I_{a+b}) = \min \{a, b\}.$$
\end{lem}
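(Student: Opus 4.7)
The plan is to assume $(a, b)$ fits in $Q_{n+1}$ with witnesses $g_1, g_2 \in \Aut(Q_{n+1})$, so by Theorem \ref{main theorem} we have $m(g_1(I_a)) + m(g_2(I_b)) = m(I_{a+b})$. Since $|a - b| \geq 2$, I may assume without loss of generality that $a > b$, so the target is to exhibit some $j$ with $m_j(I_{a+b}) = b$. Write $a' = m_0(g_1(I_a))$ and $b' = m_0(g_2(I_b))$, so $a' + b' = m_0(I_{a+b}) = a + b - 2^n$, which is positive.

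The first step, and the only genuinely new ingredient, is to rule out the configuration in which one of $a', b'$ vanishes. Suppose for contradiction that $a' = 0$. Then $g_1(I_a) \subseteq F_{0, 0}$, so $|g_2(I_b) \cap F_{0, 0}| = 2^n - a > 0$ and $|g_2(I_b) \cap F_{0, 1}| = a + b - 2^n > 0$. Since $I_b \subseteq F_{0, 0}$ and any automorphism carries hyperfaces to hyperfaces, there exist $\ell$ and $k$ with $g_2(F_{0, 0}) = F_{\ell, k}$, and hence $g_2(I_b) \subseteq F_{\ell, k}$. Because $g_2(I_b)$ meets both $F_{0, 0}$ and $F_{0, 1}$, we must have $\ell \geq 1$, which forces
\[
a + b - 2^n = |g_2(I_b) \cap F_{0, 1}| \leq |F_{\ell, k} \cap F_{0, 1}| = 2^{n-1}.
\]
But membership in $S'_n$ forces $a + b > 2(2^n - 2^{n-2}) = 3 \cdot 2^{n-1}$, so $a + b - 2^n > 2^{n-1}$, a contradiction. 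Hence $a', b' > 0$.

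From this point the argument is a routine assembly of earlier results. With $a', b' > 0$ and (re-indexing to $Q_{n+1}$) $2^n < a + b \leq 2^{n+1}$ and $\max\{a, b\} < 2^n$, the hypotheses of Lemma \ref{zero matching} are met. I first normalize $g_1, g_2$ using part (2) and then invoke part (3), which produces an index $j$ with
\[
m_j(I_{a+b}) = m_j(g_1(I_a)) + m_j(g_2(I_b)) \in \{a, b\}.
\]
By Lemma \ref{computational results}(1), $m_j(I_{a+b}) \leq \lfloor (a+b)/2 \rfloor$; if this equalled $a$, then $a \leq b$, contradicting $a > b$. Therefore $m_j(I_{a+b}) = b = \min\{a, b\}$, as required. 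The main obstacle is the hyperface-size argument of the first paragraph; once $a', b' > 0$ is secured, the final index $j$ is delivered by Lemma \ref{zero matching} and the value is pinned down by Lemma \ref{computational results}(1). The geometric heart of the obstacle is that, since $g_2$ is an automorphism, $g_2(I_b)$ is trapped inside a single hyperface $F_{\ell,k}$; when $\ell \neq 0$, that hyperface has only $2^{n-1}$ vertices in $F_{0,1}$, which is too few to hold $g_2(I_b) \cap F_{0,1}$ given the lower bound on $a+b$ inherited from $S'_n$.
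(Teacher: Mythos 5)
Your proof is correct, but it takes a genuinely different route from the paper's. The paper argues entirely inside the two $m$-tables: since $a-b\geq 2$ forces every entry of $m(I_{a+b})$ to be at most $\lfloor (a+b)/2\rfloor < a$, the labeled entry in column $0$ of the $m$-table of $I_a$ must be $0$ rather than $a$; the entry of the $m$-table of $I_b$ carrying the same label must then equal $m_j(I_{a+b}) \geq m_0(I_{a+b}) = a+b-2^n > 2^{n-1}$, and the only entry of that table exceeding $2^{n-1}$ is $b$ (for $t\geq 1$ the entries satisfy $m_t(I_b)<2^{n-1}$ and $b-m_t(I_b)\leq b-m_1(I_b)=2^{n-1}$). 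That argument is short and needs nothing beyond Lemma \ref{computational results}(1) and the Ordering Lemma. You instead first rule out $m_0(g_1(I_a))=0$ and $m_0(g_2(I_b))=0$ by a hyperface-cardinality argument --- the containment $g_2(I_b)\subseteq g_2(F_{0,0})=F_{\ell,k}$ with $\ell\geq 1$ caps $|g_2(I_b)\cap F_{0,1}|$ at $2^{n-1}$, while $a,b>2^n-2^{n-2}$ forces $a+b-2^n>2^{n-1}$ --- and then invoke the full strength of Lemma \ref{zero matching}(3) before finishing with the same $\lfloor (a+b)/2\rfloor < a$ estimate. Your verification of the hypotheses of Lemma \ref{zero matching} (after re-indexing to $Q_{n+1}$) and the normalization via its part (2) are both handled correctly, so the argument is sound. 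What your route buys is reuse of existing machinery plus a clean geometric observation (the trapping of $g_2(I_b)$ in a single hyperface) that does not appear in the paper; what it costs is dependence on the most technical lemma of Section 3 and an extra case analysis ($a',b'>0$) that the paper's direct table-counting sidesteps entirely.
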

\begin{proof}
Without loss of generality, assume that $a > b$, so that $a-b \geq 2$. By our bounds on $(a, b) \in S'_n$, the m-table of $I_a$ and $I_b$ are
$$\left( \begin{array}{ccc} 0 & * & \cdots \\
a & * & \cdots \end{array} \right) \text{ and } \left( \begin{array}{ccc} 0 & b - 2^{n-1} & \cdots \\
b & 2^{n-1} & \cdots \end{array} \right).$$
Since $$\lfloor \frac{a+b}{2} \rfloor < \lfloor \frac{a+b}{2} \rfloor + 1 = \lfloor \frac{a+b+2}{2} \rfloor \leq \lfloor \frac{a+a}{2} \rfloor = a,$$
we have that $m_j(I_{a+b}) \leq \lfloor \frac{a+b}{2} \rfloor < a$ for each $j$. Hence, $0$ is the labeled entry in column 0 of the m-table of $I_a$. Moreover, by our bounds on $a$ and $b$, $$m_j(I_{a+b}) \geq m_0(I_{a+b}) = a+b-2^n > 2\cdot (2^n - 2^{n-2}) - 2^n = 2^{n-1}$$
Thus, the entry in the m-table of $I_b$ that pairs with $0$ can only be $b$. If this label is $j$, then $m_j(I_{a+b}) = b$.
\end{proof}

\begin{defn} If $m_j(I_{a+b}) \neq \min \{a, b\}$ for each index $j$, then call the pair $(a, b)$ to be column-0 unfit.
\end{defn}

\noindent By the above lemma, if a pair $(a, b) \in S'_n$ is column-0 unfit, then it is unfit. Additionally, when $(a, b) \in S'_n$ is column-0 unfit, the number $\min \{a, b\}$ lies between $m_j(I_{a+b})$ and $m_{j+1}(I_{a+b})$ for some index $j$. To see this, we assume again that $a > b$. Since $a < 2^n$, $m_0(I_{a+b}) = a + b - 2^n < b$. Since $a - b \geq 2$, $$m_n(I_{a+b}) = \lfloor \frac{a+b}{2} \rfloor > \lfloor \frac{a-2+b}{2} \rfloor \geq \lfloor \frac{b+b}{2} \rfloor = b.$$
Thus, $b$ is in between $m_0(I_{a+b})$ and $m_n(I_{a+b})$. Since $b$ is not equal to any $m_j(I_{a+b})$, $b$ must be in between $m_j(I_{a+b})$ and $m_{j+1}(I_{a+b})$ for some index $j$. \\

\noindent Now we construct a sequence of sets $A_n$, starting with $A_4 = \{(13, 15), (15, 13)\}$. For $n \geq 4$, let $A_{n+1}$ consist of: 
\begin{enumerate}
\item All elements in the set $\{2a_0-1, 2a_0, 2a_0+1\} \times \{2b_0-1, 2b_0, 2b_0+1\}$ for each $(a_0, b_0) \in A_n$.
\item $(a, a-2), (a-2, a) \in S'_{n+1}$ such that $a \equiv 3 \mod 4$.
\end{enumerate}

\noindent It is easily verifiable by induction that $A_n \subseteq S'_n$. 

\begin{prop}
\label{A_n column-0 unfit} For each $n \geq 4$, every element in $A_n$ is column-0 unfit.
\end{prop}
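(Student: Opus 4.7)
The plan is induction on $n$, leveraging the doubling formulas in Lemma \ref{m of 2a and 2a+1}. For the base case $n = 4$, a direct computation gives $m(I_{28}) = (12, 12, 12, 14, 14)$ inside $Q_5$, so $\min\{13, 15\} = 13$ does not appear among these entries, and both $(13, 15)$ and $(15, 13)$ are column-0 unfit. For the inductive step, assume every pair in $A_n$ is column-0 unfit and let $(a, b) \in A_{n+1}$.

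For pairs of type (2), write $(a, a - 2)$ with $a \equiv 3 \pmod 4$, so $\min\{a, a - 2\} = a - 2$ is odd. Since $a + b = 2(a - 1)$, Lemma \ref{m of 2a and 2a+1}(1) gives $m_j(I_{a+b}) = 2 m_{j+1}(I_{a-1})$ for $j \leq n$, which is even and so cannot equal the odd number $a - 2$, and $m_{n+1}(I_{a+b}) = a - 1 \neq a - 2$. The mirror pair $(a - 2, a)$ is handled by symmetry.

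For pairs of type (1), write $(a, b) = (2a_0 + \epsilon_1, 2b_0 + \epsilon_2)$ with $(a_0, b_0) \in A_n$ and $\epsilon_i \in \{-1, 0, 1\}$. The set $A_n$ is symmetric under swapping coordinates (by symmetry of $A_4$ and of the construction), so I may assume $a_0 < b_0$; then $b_0 \geq a_0 + 2$ forces $\min\{a, b\} = a = 2a_0 + \epsilon_1$. Set $c = a_0 + b_0$ and $r = \epsilon_1 + \epsilon_2 \in \{-2, -1, 0, 1, 2\}$, so $s = a + b = 2c + r$. By Lemma \ref{m of 2a and 2a+1}, for each $j \leq n$ the entry $m_j(I_s)$ equals $2 m_{j+1}(I_{c'})$ with $c' \in \{c-1, c, c+1\}$ when $r$ is even, and equals $2 m_{j+1}(I_c) + \eta_j$ with $\eta_j \in \{-1, 0, 1\}$ when $r$ is odd (using $m_j(I_{c+1}) - m_j(I_c) = [c_j = 1]$ to collapse the sum). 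In each branch I would show that either $m_j(I_s)$ and $a$ have opposite parities (immediate from $\eta_j$ and $\epsilon_1$), giving $m_j(I_s) \neq a$ by parity, or they have the same parity, in which case $m_j(I_s) = a$ reduces to $m_{j+1}(I_c) = a_0$, contradicting the inductive hypothesis that $(a_0, b_0)$ is column-0 unfit. The remaining index $j = n + 1$ gives $m_{n+1}(I_s) = \lfloor s / 2 \rfloor \geq a + 1 > a$ since $b \geq a + 2$.

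The main obstacle is not a conceptual one but the uniform bookkeeping over the nine choices of $(\epsilon_1, \epsilon_2)$: each choice fixes the parity of $a = \min\{a, b\}$ and the sign range of $\eta_j$, and I must verify that the parity-versus-unfitness dichotomy exhausts the possibilities. In particular the delicate branches are $r = \pm 1$, where I split on whether the indicator $[c_{j+1} = 1]$ (or $[(c-1)_{j+1} = 1]$) is $0$ or $1$ inside each subcase $\epsilon_1 \in \{0, 1\}$ (for $r = 1$) or $\epsilon_1 \in \{-1, 0\}$ (for $r = -1$), reducing each resulting branch to a one-line parity check or a direct appeal to the inductive hypothesis.
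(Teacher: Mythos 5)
Your proposal is correct, and it takes a genuinely different route from the paper's. For the descendants $(2a_0+\epsilon_1, 2b_0+\epsilon_2)$ the paper never argues entry by entry: it uses the observation that column-0 unfitness of $(a_0,b_0)\in S'_{k-1}$ forces $\min\{a_0,b_0\}$ to lie \emph{strictly between} two consecutive entries $m_j(I_{a_0+b_0})$ and $m_{j+1}(I_{a_0+b_0})$, doubles that gap via Lemma \ref{m of 2a and 2a+1}(1) to get $2m_j\le 2b_0-2$ and $2m_{j+1}\ge 2b_0+2$, and then absorbs the perturbations $\alpha,\beta\in\{-1,0,1\}$ into the slack of $\pm 2$, concluding by the ordering lemma that the new minimum is again sandwiched and hence missed by every entry. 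You instead check every index directly with a parity dichotomy: each $m_j(I_s)$ is $2m_{j+1}(I_{c'})$ or $2m_{j+1}(I_c)\pm\eta_j$ with $\eta_j\in\{0,1\}$, so it either has the wrong parity to equal $a=2a_0+\epsilon_1$, or equality would force $m_{j+1}(I_{a_0+b_0})=a_0$, which is exactly the inductive hypothesis; I verified that all nine $(\epsilon_1,\epsilon_2)$ branches close in this way, and the top index is handled by $\lfloor (a+b)/2\rfloor>a$. Your treatment of the pairs $(a,a-2)$ with $a\equiv 3 \pmod 4$ is also cleaner: since $a+b=2(a-1)$, every entry except the last is even while $a-2$ is odd, replacing the paper's mod-$4$/mod-$8$ computation. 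What the paper's argument buys is that it re-establishes the sandwich configuration explicitly at each level and needs no case split; what yours buys is purely local arithmetic with no inequality bookkeeping and no appeal to monotonicity of the $m$-vector. Two cosmetic points: the paper's convention in Sections 4--5 puts everything in $Q_{n+1}$ with $m$-vector indices $0,\dots,n$, so your ``$j\le n$'' and ``$m_{n+1}$'' should read ``$j\le n-1$'' and ``$m_n$'' to match Lemma \ref{m of 2a and 2a+1}; and the symmetry of $A_n$ under transposition that you invoke to assume $a_0<b_0$ does hold but deserves the one-line inductive remark you allude to.
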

\begin{proof}
We prove by induction. When $n = 4$, it can be quickly verified by writing out the m-table that $(13, 15)$ and $(15, 13)$ is column-0 unfit. Now assume that every element in $A_{k-1}$ is column-0 unfit, $k > 4$. We'll prove the same for $A_{k}$. \\

\noindent Let $\alpha, \beta \in \{-1, 0, 1\}$, $(a_0, b_0) \in A_{k-1}$, and without loss of generality, $a_0 > b_0$, so that $a_0 - b_0 \geq 2$. By the remark above and the assumption that $(a_0, b_0)$ is column-0 unfit, we can find $j$ such that $m_j(I_{a_0 + b_0}) < b_0 < m_{j+1}(I_{a_0+b_0})$, which implies that $$2m_j(I_{a_0+b_0}) \leq 2b_0 - 2$$ $$2m_{j+1}(I_{a_0+b_0}) \geq 2b_0 + 2$$
We claim that $m_{j-1}(I_{2a_0+2b_0+\alpha+\beta}) < 2b_0 + \beta < m_j(I_{2a_0+2b_0+\alpha+\beta})$. Observe that
$$m_{j-1}(I_{2a_0+2b_0+\alpha+\beta}) \leq m_{j-1}(I_{2a_0+2b_0}) + \max \{\alpha + \beta, 0\} = 2m_{j}(I_{a_0+b_0}) + \max \{\alpha + \beta, 0\} \leq 2b_0-2+ \max \{\alpha + \beta, 0\},$$
and
$$m_j(I_{2a_0+2b_0+\alpha+\beta}) \geq m_{j}(I_{2a_0+2b_0}) + \min \{\alpha + \beta, 0\} = 2m_{j+1}(I_{a_0+b_0}) + \min \{\alpha + \beta, 0\} \geq 2b_0+2+ \min \{\alpha + \beta, 0\}.$$
The claim immediately follows from the observation that $\max \{\alpha + \beta, 0\} - 2 < \beta$ and $\min \{\alpha + \beta, 0\} + 2 > \beta$. Moreover, by lemma \ref{orderinglemma}, $(2a_0 + \alpha, 2b_0 + \beta)$ is column-0 unfit. \\

\noindent We're left to prove that $(a, a-2), (a-2, a) \in S'_{k}$ such that $a \equiv 3 \mod 4$ is column-0 unfit. By symmetry we only need the consider $(a, a-2)$. Since $a \equiv 3 \mod 4$, $2a -2 = a + (a-2) \equiv 0 \mod 4$, which means that $m_k(I_{2a-2}) = m_{k-1}(I_{2a-2}) = (2a-2)/2 = a-1$. At the same time, $2a-2$ is always $4 \mod 8$ when $a \equiv 3 \mod 4$, which means that $m_{k-2}(I_{2a-2}) \equiv 0 \mod 4$. Since $a-1 \equiv 2 \mod 4$ and $m_{k-1}(I_{2a-2}) \geq m_{k-2}(I_{2a-2})$, we have that $$m_{k-2}(I_{2a-2}) \leq m_{k-1}(I_{2a-2}) -2 = a-3.$$
Thus, $m_{k-2}(I_{2a-2}) < a-2 < m_{k-1}(I_{2a-2})$, which implies that $(a, a-2)$ is column-0 unfit.
\end{proof}

\noindent Hence, counting the number of elements in $A_n$ will give us a lower bound for the number of unfit pairs in $S_n$. 

\begin{defn}
Let $A, B \subseteq \Z^2$. If $$B \cap \bigcup_{(a, b) \in A} \{a-1, a, a+1\} \times \{b-1, b, b+1\} = \emptyset$$
or
$$A \cap \bigcup_{(c,d)\in B} \{c-1,c,c+1\} \times \{d-1,d,d+1\} = \emptyset$$
Then $A$ and $B$ are said to be separated.
\end{defn} 

\noindent It is easy to see that the two conditions are equivalent.

\begin{lem}
\label{separated after multiplying} 
If $A, B \in \Z^2$ are separated, then 
$$A' = \bigcup_{(a, b) \in A} \{2a-1, 2a, 2a+1\} \times \{2b-1, 2b, 2b+1\}$$
and
$$B' = \bigcup_{(c,d)\in B} \{2c-1,2c,2c+1\} \times \{2d-1,2d,2d+1\}$$
are separated.
\end{lem}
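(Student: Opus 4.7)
The plan is to unpack the definition of separated, take arbitrary points in $A'$ and $B'$, and trace them back to their originating pairs in $A$ and $B$, where we may apply the hypothesis. The doubling operation gives a factor of $2$ in each coordinate, while the $3 \times 3$ blocks only displace coordinates by at most $1$, so there should be room to spare.

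More concretely, let $(a', b') \in A'$ and $(c', d') \in B'$ be arbitrary. By definition of $A'$, there exists $(a, b) \in A$ with $a' = 2a + \alpha$ and $b' = 2b + \beta$ for some $\alpha, \beta \in \{-1, 0, 1\}$. Similarly, there exists $(c, d) \in B$ with $c' = 2c + \gamma$ and $d' = 2d + \delta$ for some $\gamma, \delta \in \{-1, 0, 1\}$. Since $A$ and $B$ are separated, applying the (equivalent) formulation that no point of $B$ lies in the $3 \times 3$ block around any point of $A$ gives $|a - c| \geq 2$ or $|b - d| \geq 2$.

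In the first case, $|a' - c'| = |2(a - c) + (\alpha - \gamma)| \geq 2|a - c| - |\alpha - \gamma| \geq 4 - 2 = 2$ by the reverse triangle inequality, so $(c', d')$ does not lie in the $3 \times 3$ block around $(a', b')$. The second case is symmetric, yielding $|b' - d'| \geq 2$. Since $(a', b') \in A'$ and $(c', d') \in B'$ were arbitrary, $A'$ and $B'$ are separated.

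This is essentially a computation, with no real obstacle; the only thing to be careful about is that the definition of separated is symmetric (both formulations given in the definition are equivalent, as the paper notes), so it suffices to verify either containment condition. The key quantitative input is simply that doubling creates a gap of $4$, which more than absorbs the $\pm 1$ perturbations coming from the $3 \times 3$ expansions on both sides.
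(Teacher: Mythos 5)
Your proof is correct and is essentially the paper's argument in contrapositive form: the paper assumes the images fail to be separated and uses a parity (mod $2$) observation to pull the offending pair back to $A$ and $B$, whereas you argue forward with the reverse triangle inequality ($|2(a-c)+(\alpha-\gamma)| \geq 4-2 = 2$); both rest on the same quantitative fact that doubling produces a gap of $4$ which absorbs the $\pm 1$ perturbations. No gaps.
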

\begin{proof}
Suppose not. Let $(a', b') \in A'$ and $(a' + \alpha, b' + \beta) = (c', d') \in B'$, where $\alpha, \beta \in \{-1, 0, 1\}$. By definition, write $(a', b') = (2a + \alpha_1, 2b + \beta_1)$ and $(c',d') = (2c + \alpha_2, 2d + \beta_2)$, where $\alpha_1, \beta_1, \alpha_2, \beta_2 \in \{-1, 0, 1\}$. Then $(2a + \alpha_1 + \alpha, 2b + \beta_1 + \beta) = (2c + \alpha_2, 2d+\beta_2)$, which implies that $$(2a + \alpha + \alpha_1 - \alpha_2, 2b + \beta + \beta_1 - \beta_2) = (2c+2d).$$ 
Modding out by 2, we see that $\alpha + \alpha_1 - \alpha_2, \beta + \beta_1 - \beta_2 \equiv 0 \mod 2$, so they must be $-2, 0$ or $2$. Let $\alpha_0 = (\alpha + \alpha_1 - \alpha_2)/2$ and $\beta_0 = (\beta + \beta_1 - \beta_2)/2$. Then $\alpha_0, \beta_0 \in \{-1, 0, 1\}$, and $(a + \alpha_0, b + \beta_0) = (c,d)$ by the above equality. This contradicts the assumption that $A, B$ are separated.
\end{proof}

\begin{prop}
\label{size of A_n} $|A_n| = 2|A_{n-1}| + 2^{2n-5} - 2^{n-1} + 2$. Thus, if $y_n$ be the number of unfit pairs $(a, b)$ in $S_n = \{(a, b): 2^n - 2^{n-2} \leq a, b < 2^n\}$, then
$$y_{n} \geq 2y_{n-1} + 2^{2n-5} - 2^{n-1} + 2.$$
\end{prop}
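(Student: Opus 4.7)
The plan is to induct on $n$ by giving an explicit block decomposition of $A_n$, then computing $|A_n|$ and verifying the recursion algebraically. I would show that $A_n$ is a disjoint union of axis-aligned square blocks whose side lengths can be read off from the history of the construction: two main blocks of side length $2^{n-3}-1$ (the descendants of $(13,15)$ and $(15,13) \in A_4$ under iterated application of part (1)); for each $5 \leq m \leq n-1$, exactly $2^{m-3}$ blocks of side length $2^{n-m+1}-1$ (the descendants of the $2^{m-3}$ part-(2) pairs introduced at step $m$, after $n-m$ further expansions); and $2^{n-3}$ singletons (the part-(2) pairs added at step $n$ itself). The base case $n=4$ consists of the two singletons $(13,15),(15,13)$.

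Two observations drive the inductive step. First, a $k \times k$ block $\{a_0,\ldots,a_0+k-1\} \times \{b_0,\ldots,b_0+k-1\}$ becomes a $(2k+1)\times(2k+1)$ block under part (1), because the pointwise $3 \times 3$ neighborhoods telescope to $\{2a_0-1,\ldots,2(a_0+k-1)+1\} \times \{2b_0-1,\ldots,2(b_0+k-1)+1\}$. Second, Lemma \ref{separated after multiplying} guarantees that if the blocks of $A_{n-1}$ are pairwise separated, then so are their expansions, so no unexpected merging occurs and the block decomposition persists into $A_n$. A short calculation with the numerical bounds defining $S'_n$ then shows the new part-(2) singletons of $A_n$ lie outside every expanded block. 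Summing block sizes yields
\[
|A_n| \;=\; 2(2^{n-3}-1)^2 \;+\; \sum_{m=5}^{n-1} 2^{m-3}\bigl(2^{n-m+1}-1\bigr)^2 \;+\; 2^{n-3}.
\]
Subtracting $2|A_{n-1}|$ written in the analogous form: the main-block contribution gives $2(2^{n-3}-1)^2 - 4(2^{n-4}-1)^2 = 2^{2n-6}-2$; after the reindexing $m \mapsto m+1$ in the sum for $|A_{n-1}|$, the secondary sums telescope, leaving only the boundary term $4(2^{n-4}-1)^2 = 2^{2n-6} - 2^{n-1} + 4$; and the singleton contributions $2^{n-3}$ and $2\cdot 2^{n-4}$ cancel. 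Adding these gives $|A_n| - 2|A_{n-1}| = 2^{2n-5} - 2^{n-1} + 2$. Finally, Proposition \ref{A_n column-0 unfit} together with Lemma \ref{column-0 unfit} imply every pair in $A_n \subseteq S_n$ is unfit, so $y_n \geq |A_n|$, and the recursion delivers the claimed lower bound on $y_n$.

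The main obstacle is the bookkeeping for block merging: the naive estimate $|A_n| \approx 9|A_{n-1}| + 2^{n-3}$ badly overcounts, because the $3\times 3$ neighborhoods of adjacent points of $A_{n-1}$ fuse into a single larger block in $A_n$ rather than remaining disjoint. As a consequence the induction cannot be carried through on $|A_{n-1}|$ alone; one must carry along the finer cluster structure. Lemma \ref{separated after multiplying} is the precise tool preventing unexpected fusion between descendants of different clusters, and verifying that the part-(2) singletons of $A_n$ are genuinely new requires care with the numerical bounds defining $S'_n$.
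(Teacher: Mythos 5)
Your strategy is the same as the paper's: an inductive block decomposition of $A_n$ into pairwise separated squares, with Lemma \ref{separated after multiplying} preventing fusion of descendants of distinct blocks. Your indexing of the blocks by birth step $m$ is equivalent to the paper's indexing by side length (the $2^{m-3}$ part-(2) pairs born at step $m$ become the $2^{n-2-j}$ squares of side $2^j-1$ with $j=n-m+1$), and your telescoping computation of $|A_n|-2|A_{n-1}|$ checks out.

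The one step that does not go through as you describe it is the isolation of the newly added part-(2) singletons. You propose to show they ``lie outside every expanded block'' by ``a short calculation with the numerical bounds defining $S'_n$,'' but those bounds cannot do the job: the expanded blocks also live inside $S'_n$ and can themselves contain pairs of the form $(a,a-2)$ --- for instance $(29,27)$ lies in the block $\{29,30,31\}\times\{25,26,27\}$ of $A_5$ --- so position alone does not separate the singletons from the blocks. What actually isolates a new singleton $(a,a-2)$ with $a\equiv 3 \pmod 4$ is arithmetic rather than geometry: each of its eight neighboring pairs $(c,d)$ satisfies $|c-d|\le 2^{\max\{v_2(c),v_2(d)\}}$ (using $v_2(a+1)\ge 2$ and $v_2(a-1)=1$, etc.) and is therefore a fit pair, whereas every element of $A_n$ is unfit by Proposition \ref{A_n column-0 unfit} together with Lemma \ref{column-0 unfit}; hence no neighbor of such a singleton can belong to $A_n$. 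This is the argument the paper uses, and it simultaneously supplies the disjointness your count needs and the separation needed to continue the induction at the next step; without it (or an equivalent) the claimed block decomposition, and hence the formula for $|A_n|$, is not justified.
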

\begin{proof}
We claim that $A_n$ consists of $2^{n-2-j}$ copies of $2^j - 1$ by $2^j-1$ square for all $j$ that satisfies $1 \leq j \leq n-3$. Moreover, each square is separated from all other squares. \\

\noindent The case $n = 4$ can be easily checked. Suppose the case $n = k-1$ is known, where $k > 4$, and we wish to prove the same for $A_k$. First, if $(a, a-2) \in A_k$ and $a \equiv 3 \mod 4$, it can be checked directly that, if $(c, d)$ is any one of the surrounding pair, $|c - d| \leq 2^{\max \{v_2(c), v_2(d)\}}$, which implies that $(c,d)$ fits. But each element in $A_k$ is unfit, so $(c,d) \not\in A_k$. Thus, $(a, a-2)$ is separated from all other elements of $A_k$. This accounts for the $2^{k-3}$ copies of $1$ by $1$ square. \\

\noindent Suppose $B_1, B_2, \ldots, B_N$ are the squares in $A_{k-1}$, each separated from all others. By construction, $B'_1, B'_2, \ldots, B'_N$ are the remaining squares in $A_k$, where 
$$B'_i = \bigcup_{(a, b) \in A_i} \{2a-1, 2a, 2a+1\} \times \{2b-1, 2b, 2b+1\}$$
If $B_i$ is a $2^j-1$ by $2^j-1$ square, where $1 \leq j \leq k-4$ then $B'_i$ is a $2^{j+1}-1$ by $2^{j+1}-1$ square. Since $B_i$ and $B_j$ are separated if $i \neq j$, $B'_i$ and $B'_j$ are separated by the above lemma. Moreover, each $B'_i$ is separated from the $1$ by $1$ squares, since we've shown that they are separated from all other elements of $A_k$. Finally, since there are $2^{k-3-j}$ copies of $2^j -1$ by $2^j -1$ squares in $A_{k-1}$, where $1 \leq j \leq k-4$, these squares generate $2^{k-2-j}$ copies of $2^j-1$ by $2^j-1$ squares in $A_k$, where $2 \leq j \leq k-3$. This proves the claim. \\

\noindent To count the size of $A_n$ with respect to the size of $A_{n-1}$, note that $A_{n-1}$ has $2^{n-3-j}$ copies of $2^j-1$ by $2^j-1$ squares for all $1 \leq j \leq n-4$, while $A_n$ has $2^{n-2-j}$ copies of $2^j-1$ by $2^j-1$ squares for all $1 \leq j \leq n-4$ and $2$ copies of $2^{n-3}-1$ by $2^{n-3}-1$ squares. Thus, 
$$|A_n| = 2|A_{n-1}| + 2 \cdot (2^{n-3}-1)^2 = 2|A_{n-1}| + 2^{2n-5} - 2^{n-1} + 2.$$
\end{proof}

\begin{prop}
\label{Remaining Cases}
The number of unfit pairs in the set $\{(a, b): 2^n - 2^{n-2} \leq a, b < 2^n\}$ is 
$$4^{n-2} - (n-2)2^{n-1} + 2^{n-2} - 2.$$
And they are precisely the set 
$$\{(a, b): 2^n - 2^{n-2} < b < a < 2^n, |a - b| > 2^{\max \{v_2(a), v_2(b)\}}\}$$
\end{prop}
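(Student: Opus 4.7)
The plan is to combine the lower bound on $x_n$ (pairs satisfying the sufficient fit criterion) from Proposition \ref{lower bound for fit pairs} with the lower bound on $y_n$ (unfit pairs) from Proposition \ref{size of A_n}, and then squeeze using the trivial upper bound $x_n + y_n \leq |S_n| = 4^{n-2}$. Once equality is forced in both lower bounds, the proposition follows immediately: on one hand the count of unfit pairs is the claimed formula, and on the other hand the unfit set must coincide with the complement (inside $S_n$) of the set characterized by $|a-b|\leq 2^{\max\{v_2(a),v_2(b)\}}$.

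Concretely, first I would introduce the candidate values $\tilde{x}_n := (n-2)2^{n-1} - 2^{n-2} + 2$ and $\tilde{y}_n := 4^{n-2} - \tilde{x}_n = 4^{n-2} - (n-2)2^{n-1} + 2^{n-2} - 2$. A short arithmetic check verifies $\tilde{x}_n = 2\tilde{x}_{n-1} + 2^{n-1} - 2$ and $\tilde{y}_n = 2\tilde{y}_{n-1} + 2^{2n-5} - 2^{n-1} + 2$, matching precisely the two recursive lower bounds on $x_n$ and $y_n$ already established. The base case $n=4$ is a direct check: $S_4 = \{12,13,14,15\}^2$ has $16$ pairs, of which only $(13,15)$ and $(15,13)$ fail the inequality $|a-b|\leq 2^{\max\{v_2(a),v_2(b)\}}$, and these two were shown column-$0$ unfit in the base case of Proposition \ref{A_n column-0 unfit}. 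Hence $x_4 = 14 = \tilde{x}_4$ and $y_4 = 2 = \tilde{y}_4$.

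For the inductive step, assume $x_{n-1} = \tilde{x}_{n-1}$ and $y_{n-1} = \tilde{y}_{n-1}$. Propositions \ref{lower bound for fit pairs} and \ref{size of A_n} then give
\[
x_n \;\geq\; 2\tilde{x}_{n-1} + 2^{n-1} - 2 \;=\; \tilde{x}_n, \qquad y_n \;\geq\; 2\tilde{y}_{n-1} + 2^{2n-5} - 2^{n-1} + 2 \;=\; \tilde{y}_n.
\]
Since the sets counted by $x_n$ and $y_n$ are disjoint subsets of $S_n$, and $|S_n| = (2^{n-2})^2 = 4^{n-2} = \tilde{x}_n + \tilde{y}_n$, adding these two inequalities forces equality in both: $x_n = \tilde{x}_n$ and $y_n = \tilde{y}_n$. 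This proves the cardinality formula and completes the induction.

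Finally, equality $x_n + y_n = |S_n|$ means the fit-by-criterion set and the unfit set partition $S_n$. Combined with Proposition \ref{Translation of the Upper Triangle}'s companion (the sufficient condition $|a-b|\leq 2^{\max\{v_2(a),v_2(b)\}}$ implies fit), this identifies the unfit pairs in $S_n$ with exactly those for which $|a-b| > 2^{\max\{v_2(a),v_2(b)\}}$, as claimed. I do not anticipate a real obstacle here beyond bookkeeping: all the substantive work---the sufficient criterion for fit, the column-$0$ unfit construction via $A_n$, and both recursions---is already in hand. The main delicate point to be careful about is the base case and ensuring the two lower-bound recursions are applied to the same set $S_n$ and not confused with $S_n'$ (the strict-inequality subset used in Lemma \ref{column-0 unfit}), since $x_n$ counts pairs in all of $S_n$ while the column-$0$ construction operates inside $S_n'$; the edge pairs with $a = 2^n - 2^{n-2}$ or $b = 2^n - 2^{n-2}$ automatically satisfy the fit inequality (as $v_2(2^n - 2^{n-2}) = n-2$ is large), so they are absorbed into the $x_n$ count without trouble.
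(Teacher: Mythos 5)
Your proposal is correct and takes essentially the same route as the paper: both arguments squeeze the lower-bound recursions for $x_n$ and $y_n$ from Propositions \ref{lower bound for fit pairs} and \ref{size of A_n} against the total count $|S_n| = 4^{n-2}$, starting from $x_4 = 14$, $y_4 = 2$, to force equality in both bounds and hence obtain the cardinality and the characterization simultaneously. The only cosmetic difference is that you verify the guessed closed forms $\tilde{x}_n, \tilde{y}_n$ against the recurrences inside the induction, whereas the paper first proves $x_n + y_n = 4^{n-2}$ inductively and then solves the recurrence for $y_n$ afterward.
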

\begin{proof}
\noindent By our numerical results, $x_4 = 14$ and $y_4 = 2$, so $x_4 + y_4 = 4^2$. We shall prove by induction that $x_n + y_n = 4^{n-2}$ for each $n \geq 4$. Suppose for induction that this is true for $n-1$, $n \geq 5$. Then
$$x_n + y_n \geq (2x_{n-1} + 2^{n-1} - 2) + (2y_{n-1} + 2^{2n-5} - 2^{n-1} + 2) = 2(x_{n-1} + y_{n-1}) + 2^{2n-5} = 2 \cdot 4^{n-3} + 2^{2n-5} = 4^{n-2}$$
\noindent Since $x_n, y_n$ counts the number of points in two disjoint subsets of $S_n$, and $|S_n| = 4^{n-2}$, it follows that $x_n + y_n = 4^{n-2}$. Moreover, the bounds given by Proposition \ref{lower bound for fit pairs} and Proposition \ref{size of A_n} must be equality. This immediately gives the second part of the Proposition.

\noindent In particular, for each $n \geq 5$, $$y_n = 2y_{n-1} + 2^{2n-5} - 2^{n-1} + 2$$

\noindent It is straightforward to verify that $y_n = 4^{n-2} - (n-2)2^{n-1} + 2^{n-2} - 2$ is a closed form for the recursion. Thus, the number of unfit pairs in the set $\{(a, b): 2^n - 2^{n-2} \leq b < a < 2^n\}$ is $2^{2n-5} - (n-2)2^{n-2} + 2^{n-3} - 1.$ 
\end{proof} 
\vspace{10mm}

\section{Number of Unfit Pairs}


\noindent In this section, we'll derive a closed formula for the number of unfit pairs $(a, b)$ with $(a, b) \leq 2^n$.

\begin{prop}
\label{unfitrecurrence}
Let $t_n$ be the number of unfit pairs with $(a, b) \leq 2^n$. Then $t_4 = 24$ and $t_n$ satisfies the following recurrence for $n \geq 5$:
$$t_n = 2t_{n -1} + \sum_{j = 4}^{n - 2} 2^{n - 2 - j} t_{j} + 6\left(4^{n - 2} - (n - 1)2^{n - 2}\right).$$
\end{prop}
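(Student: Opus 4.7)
The plan is to partition $[1, 2^n]^2$ into regions using the lines $a = 2^{n-1}$, $b = 2^{n-1}$, $a + b = 2^n$, and $a + b = 3 \cdot 2^{n-1}$, count unfit pairs in each using the structural results of Section 4, and assemble the pieces into the stated recurrence. The base case $t_4 = 24$ is verified directly, for example via Figure \ref{counterexamples} restricted to $a, b \leq 16$, or by specializing the region counts below together with the fact that $t_j = 0$ for $j \leq 3$.

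For the inductive step I would compute the unfit count region by region. The lower-left quadrant $\{a, b \leq 2^{n-1}\}$ contributes $t_{n-1}$ by definition. Each mixed quadrant intersected with $\{a + b \leq 2^n\}$ contributes $L_{n-1}$ by Proposition \ref{Translation of the Upper Triangle}, where $L_{n-1}$ denotes the number of unfit pairs in the lower triangle of $[1, 2^{n-1}]^2$. The upper-right quadrant intersected with $\{a + b \leq 3 \cdot 2^{n-1}\}$ contributes $t_{n-1} - L_{n-1}$ (the middle-band count of $[1, 2^{n-1}]^2$) by Proposition \ref{Reflection of the Lower Triangle}. The top corner $\{a, b > 2^{n-1},\ a + b > 3 \cdot 2^{n-1}\}$ contributes $2 \cdot 4^{n-2} - (n-1) 2^{n-1}$ by combining Proposition \ref{New Large Triangle} and Proposition \ref{Remaining Cases}, after symmetrizing the former in $a, b$ and checking that the two asymmetric sub-regions and the Proposition \ref{Remaining Cases} square exactly tile the top corner. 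Finally, each mixed quadrant intersected with $\{a + b > 2^n\}$ contributes the same count as the top corner: setting $c = 2^{n+1} - a - b$, the 6-fold symmetry of Observation \ref{fitobs} applied in $Q_{n+1}$ shows that any unfit orbit $\{(a, b), (b, a), (a, c), (c, a), (b, c), (c, b)\}$ distributes exactly two elements into each of these three regions, and Corollary \ref{Double a and b} rules out smaller orbits (which would contain a fit diagonal pair $(x, x)$).

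Summing the region counts gives
\[ t_n = 2 t_{n-1} + L_{n-1} + 6\bigl(4^{n-2} - (n-1)2^{n-2}\bigr). \]
To eliminate $L_{n-1}$, apply the same style of decomposition to the lower triangle of $[1, 2^{n-1}]^2$, partitioning at $a, b = 2^{n-2}$: the sub-quadrant $[1, 2^{n-2}]^2$ lies entirely in the lower triangle and contributes $t_{n-2}$, each mixed sub-triangle contributes $L_{n-2}$ by Proposition \ref{Translation of the Upper Triangle}, and the upper-right sub-region is empty since $a, b > 2^{n-2}$ forces $a + b > 2^{n-1}$. This yields $L_{n-1} = t_{n-2} + 2 L_{n-2}$, which unfolds (using $L_3 = 0$) to $L_{n-1} = \sum_{j = 4}^{n - 2} 2^{n-2-j} t_j$; substituting gives the claimed recurrence. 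The main obstacle is justifying the 6-fold symmetry step cleanly: one must verify that all unfit orbits have full size $6$ and distribute exactly 2-2-2 across the three relevant regions, which reduces to checking that pairs on the partition boundaries (in particular pairs with $a + b = 2^k$) are always fit, a fact established by the explicit automorphism $y \mapsto y \oplus 1\cdots 1$ together with Corollary \ref{Double a and b}.
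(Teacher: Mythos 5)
Your proof is correct and takes essentially the same route as the paper: the same quadrant-and-band decomposition of $[1,2^n]^2$, the same invocations of Propositions \ref{Translation of the Upper Triangle}, \ref{Reflection of the Lower Triangle}, \ref{New Large Triangle} and \ref{Remaining Cases}, and the same unfolding of the triangle recursion $L_m = t_{m-1} + 2L_{m-1}$ (the paper's $u_m = t_{m-1} + 2u_{m-1}$, terminating at $u_4 = 0$). The only cosmetic difference is that you equate the unfit counts of the three lower-half regions via the full $6$-fold orbit symmetry (checking the $2$--$2$--$2$ distribution), whereas the paper applies the single symmetry $(a,b) \mapsto (a, 2^{n+1}-a-b)$ to the top corner and then reflects; both reduce to the same routine verification that boundary pairs (those involving $2^{n-1}$ or $2^n$, or with $x=y$) are fit.
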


\begin{proof}
Let $u_n$ denote the number of unfit pairs in the upper half $\{(a, b): a, b > 0, a+b \leq 2^n\},$ and let $l_n$ denote the number of unfit pairs in the lower half $\{(a, b): a, b \leq 2^n, a+b > 2^n\}$. Clearly, $t_n = u_n + l_n$, and $l_n$ is also equal to the number of unfit pairs in $\{(a, b): a, b < 2^n, a+b > 2^n\}$ or in $\{(a,b):a, b \leq 2^n, a+b \geq 2^n\}$, since $(2^n, b), (a, 2^n),$ and $(a, 2^n-a)$ are all fit pairs for each $0<a, b \leq 2^n$. \\

\noindent We begin by establishing a recurrence for $u_n$. Observe that $u_n$ is the disjoint union of 
$$\{(a,b): 0 < a, b \leq 2^{n-1}\} \cup \{(a, b): a>2^{n-1}, b>0, a+b \leq 2^n\} \cup \{(a,b):a>0, b>2^{n-1}, a+b\leq 2^n\}.$$
By Proposition \ref{Translation of the Upper Triangle}, the number of unfit pairs in each of the latter two sets is $u_{n-1}$. By definition, the number of unfit pairs in the first set is $t_{n-1}$. Thus, 
$$u_n = t_{n-1} + 2u_{n-1}$$

\noindent Now we count the unfit pairs given in section 4.3 and 4.4. Observe that the set $S_1 = \{(a,b): a, b < 2^n, a+b > 2^n - 2^{n-1}\}$ is the disjoint union of
$$\{(a,b):a<2^n, b<2^n-2^{n-2}, a+b > 2^n+2^{n-1}\} \cup \{(a,b): a<2^n-2^{n-2}, b<2^n, a+b>2^n+2^{n-1}\}$$ $$\cup \{(a,b): 2^n-2^{n-2} \leq a, b < 2^n\}.$$
By Proposition \ref{New Large Triangle}, the first two set each counts $(4^{n-2} - 3\cdot 2^{n-2}+2)/2$ unfit pairs, and by Proposition \ref{Remaining Cases}, the last set counts $4^{n-2} - (n-2)2^{n-1} + 2^{n-2}-2$ unfit pairs. Thus, the number of unfit pairs in $S_1$ is $$(4^{n-2} - (n-2)2^{n-1} + 2^{n-2}-2) + (4^{n-2} - 3\cdot 2^{n-2}+2) = 2 \cdot 4^{n-2} - (n-1)2^{n-1}.$$
Next, consider the mapping $(a, b) \rightarrow (a, 2^{n+1} - a - b)$, which maps fit pairs to fit pairs and unfit pairs to unfit pairs. The image $S_2 = \{(a, b): a< 2^n, b<2^{n-1}, a+b > 2^n+2^{n-1}\}$. Consider also $S_3 = \{(a, b): a < 2^{n-1}, b<2^n, a+b > 2^n+2^{n-1}\}$, which is the mirror symmetry of $S_2$. Note that $S_1, S_2,$ and $S_3$ are pairwise disjoint and contained in the lower half, and they have the same number of unfit pairs. The subset of the lower half that has not been counted is the set
$$\{(a,b): a \geq 2^{n-1}, b \geq 2^{n-1}, a+b \leq 2^n+2^{n-1}\}.$$
But by Proposition \ref{Reflection of the Lower Triangle}, the number of unfit pairs in this set is precisely $l_{n-1}$. Putting these all together, we have
$$t_n = u_n+l_n = t_{n-1} + 2u_{n-1} + l_{n-1} + 3(2 \cdot 4^{n-2} - (n-1)2^{n-1}) = 2t_{n-1} + u_{n-1} + 6(4^{n - 2} - (n - 1)2^{n - 2}).$$
The recurrence follows immediately by expanding $u_{n-1}$ recursively, terminating at $u_4=0$.

\noindent 
\end{proof}

\begin{thm}
For all $n \geq 4$, 
$$t_n = 4^n - \binom{4}{3}3^n + \binom{4}{2}2^n - \binom{4}{1},$$
where $t_n$ is the number of unfit pairs $(a, b)$ with $0 < a, b \leq 2^n$ as above.
\end{thm}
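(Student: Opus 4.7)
The plan is to prove the closed form by induction on $n$, using the recurrence from Proposition \ref{unfitrecurrence}. The base case $n = 4$ is immediate: $4^4 - 4 \cdot 3^4 + 6 \cdot 2^4 - 4 = 256 - 324 + 96 - 4 = 24$, matching $t_4 = 24$.

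For the inductive step, the key reduction is to eliminate the telescoping sum in the recurrence. Set $S_n := \sum_{j = 4}^{n-2} 2^{n-2-j} t_j$ and $f(n) := 6\bigl(4^{n-2} - (n-1) 2^{n-2}\bigr)$, so the recurrence reads $t_n = 2 t_{n-1} + S_n + f(n)$. The identity $S_{n+1} = 2 S_n + t_{n-1}$ (with $S_5 = 0$) lets me solve for $S_n$ from the relation at index $n$ and substitute into the relation at index $n+1$, which eliminates $S$ entirely and yields the second-order linear recurrence
$$t_{n+1} = 4 t_n - 3 t_{n-1} + f(n+1) - 2 f(n).$$
A direct expansion gives $f(n+1) - 2 f(n) = 3 \cdot 4^{n-1} - 6 \cdot 2^{n-1}$.

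The homogeneous characteristic polynomial is $(x - 1)(x - 3)$, so homogeneous solutions are spanned by $1$ and $3^n$. Since neither $2$ nor $4$ is a characteristic root, I look for particular solutions of the shapes $A \cdot 4^n$ and $B \cdot 2^n$; direct substitution into the recurrence produces $A = 1$ and $B = 6$. The general solution is therefore $t_n = \alpha + \beta \cdot 3^n + 4^n + 6 \cdot 2^n$. To fix the constants, I use $t_4 = 24$ together with $t_5 = 2 t_4 + 0 + 6(64 - 32) = 240$, computed from the recurrence (the sum in the recurrence for $n = 5$ is empty). Solving the resulting $2 \times 2$ linear system yields $\alpha = \beta = -4$, which is precisely the claimed formula $t_n = 4^n - \binom{4}{3} 3^n + \binom{4}{2} 2^n - \binom{4}{1}$.

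The main obstacle is not conceptual but purely computational: the second-order reduction and the particular-solution computation must be executed carefully to avoid sign and index errors, especially in verifying $f(n+1) - 2 f(n)$. An alternative route is strong induction applied termwise to the original recurrence, splitting each $t_j$ into its four exponential pieces and evaluating four geometric sums; the second-order reduction above is cleaner, however, and makes the shape of the closed form transparent via the characteristic roots $\{1, 2, 3, 4\}$.
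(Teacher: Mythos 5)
Your proof is correct, but it takes a genuinely different route from the paper's. The paper proves the closed form by strong induction directly on the full-history recurrence: it substitutes $t_j = 4^j - \binom{4}{3}3^j + \binom{4}{2}2^j - \binom{4}{1}$ into every term of $\sum_{j=4}^{k-1} 2^{k-1-j} t_j$, splits the resulting expression into four pieces by exponential base, and evaluates four geometric sums, checking at the end that the leftover powers of $2$ cancel ($81 - 32 - 48 - 1 = 0$). You instead eliminate the history sum via the identity $S_{n+1} = 2S_n + t_{n-1}$, reducing to the constant-coefficient second-order recurrence $t_{n+1} = 4t_n - 3t_{n-1} + 3\cdot 4^{n-1} - 6\cdot 2^{n-1}$, and then solve it by characteristic roots and undetermined coefficients. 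I verified your key computations: the reduction identity, the forcing term $f(n+1)-2f(n) = 3\cdot 4^{n-1} - 6\cdot 2^{n-1}$, the particular coefficients $A=1$, $B=6$, the initial values $t_4 = 24$, $t_5 = 240$, and the resulting $\alpha = \beta = -4$ are all correct, and the two-term initial data legitimately pins down the solution since the second-order recurrence propagates from $n=5$ onward. Your approach buys conceptual transparency --- it explains \emph{why} the answer is a linear combination of $1^n, 2^n, 3^n, 4^n$ (these are exactly the characteristic and forcing bases) rather than merely verifying it --- at the cost of the order-reduction step; the paper's termwise expansion is more mechanical but avoids introducing the auxiliary sequence $S_n$ and the second-order reformulation.
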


\begin{proof}
It is easy to verify that $s_4 = t_4 = 24$. For the rest of the sequence, we induct on $n$. Assume that for for an arbitrary $k \geq 4$, the claim holds for all $4 \leq j \leq k$, i.e.
$$t_j = 4^j - \binom{4}{3}3^j + \binom{4}{2}2^j - \binom{4}{1}.$$

\noindent Then we have
\begin{align*}
t_{k + 1} &= 2t_k + \sum_{j = 4}^{k - 1} 2^{k - 1 - j} t_{j} + 6\left(4^{k - 1} - k\cdot2^{k - 1}\right)\\
&= 2 \left( 4^k - \binom{4}{3}3^k + \binom{4}{2}2^k - \binom{4}{1} \right) + \sum_{j = 4}^{k - 1} 2^{k - 1 - j}\left(4^j - \binom{4}{3}3^j + \binom{4}{2}2^j - \binom{4}{1}\right) + 6\left(4^{k - 1} - k\cdot2^{k - 1}\right)\\
\end{align*}
The sum $t_{k+1}$ above can be decomposed into four parts $S_1 + S_2 + S_3 + S_4$ as follow:

$$S_1 = 2 \cdot 4^k + \sum_{j = 4}^{k - 1} 2^{k - 1 - j} \cdot 4^j + 6 \cdot 4^{k-1} = 4^{k+1} - 2^{k+3};$$

$$S_2 = - \binom{4}{3} \left( 2 \cdot 3^k + \sum_{j = 4}^{k - 1} 2^{k - 1 - j} \cdot 3^j \right) = - \binom{4}{3} 3^{k+1} + 81 \cdot 2^{k-2};$$

$$S_3 = \binom{4}{2} \left( 2 \cdot 2^k + \sum_{j = 4}^{k - 1} 2^{k - 1 - j} \cdot 2^j \right) - 6k \cdot 2^{k-1} = \binom{4}{2} 2^{k+1} - 3 \cdot 2^{k+2};$$

$$S_4 = -\binom{4}{1} \left( 2 + \sum_{j = 4}^{k - 1} 2^{k - 1 - j} \right) = - \binom{4}{1} - 2^{k-2},$$

\noindent where all the second equalities are obtained by simplifying the expression. Since 

$$-2^{k+3} + 81 \cdot 2^{k-2} - 3 \cdot 2^{k+2} - 2^{k-2} = 2^{k-2} (81 - 32 - 48 - 1) = 0, $$

\noindent we have 

$$t_{k+1} = S_1 + S_2 + S_3 + S_4 = 4^{k+1} - \binom{4}{3}3^{k+1} + \binom{4}{2}2^{k+1} - \binom{4}{1}.$$
\end{proof}

\noindent The closed formula for the number of unfit pairs lines up exactly with \cite{OEIS919} which describes the sequence of numbers produced by the number of surjections from an n-element set onto a four-element set, a sequence defined by $S(n,4)4!$, where $S(n,4)$ corresponds to the Stirling Number of the Second Kind at $(n,4)$ in \cite{stirling}. 

\begin{cor} 
The density of unfit pairs as $n \to \infty$ approaches $1$.
\end{cor}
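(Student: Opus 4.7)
The plan is to compute the density directly from the closed form of $t_n$ established in the theorem, by dividing by the natural denominator $4^n$ (the total number of pairs $(a,b)$ with $0 < a, b \leq 2^n$) and taking the limit. The corollary is essentially an immediate asymptotic consequence, so the proof should be a short computation rather than any further combinatorial argument.

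First, I would fix the definition of density: the density of unfit pairs at level $n$ is
\[
d_n := \frac{t_n}{(2^n)^2} = \frac{t_n}{4^n}.
\]
Substituting the formula
\[
t_n = 4^n - \binom{4}{3} 3^n + \binom{4}{2} 2^n - \binom{4}{1}
\]
from the preceding theorem yields
\[
d_n = 1 - 4 \cdot \left(\tfrac{3}{4}\right)^n + 6 \cdot \left(\tfrac{1}{2}\right)^n - 4 \cdot \left(\tfrac{1}{4}\right)^n.
\]

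Next, I would observe that each of the three correction terms tends to $0$ as $n \to \infty$, since $3/4$, $1/2$, and $1/4$ all lie strictly in $(0,1)$. Hence $d_n \to 1$, which is precisely the claim. There is no real obstacle here; the only bookkeeping point worth stating explicitly is the choice of normalization (dividing by $4^n$ rather than, say, the number of pairs with $a+b \leq 2^n$), and I would include one sentence confirming that this is the intended interpretation since the unfit set is a subset of $\{(a,b) : 0 < a,b \leq 2^n\}$, which has cardinality $4^n$.
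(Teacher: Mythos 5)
Your computation is correct and is exactly the intended argument: the paper states this corollary without proof as an immediate consequence of the closed form $t_n = 4^n - \binom{4}{3}3^n + \binom{4}{2}2^n - \binom{4}{1}$, and dividing by $4^n$ (the size of the ambient set $\{(a,b): 0 < a,b \leq 2^n\}$) and letting the exponentially decaying terms vanish is precisely the computation the authors have in mind.
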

\vspace{1em}
\section*{Acknowledgments}

\noindent Special thanks to Professor Peter Johnson for running the Auburn University Summer REU, in which the work for the current paper was conduced, and to Professor Joe Briggs who inspired us to take on this problem. 

\vspace{1em}

\end{document}